\documentclass[11pt, reqno]{amsart}

\usepackage{enumerate}
\usepackage{graphicx,graphics}
\usepackage{amsfonts, amssymb, amsthm, amsmath}
\usepackage{mathrsfs,color}
\usepackage{listings}
\usepackage{orcidlink}
\usepackage{tikz-cd}
\input{xy}
\xyoption{all}
\usepackage{tcolorbox}

\usepackage[style=numeric,sorting=nyt,giveninits=true,maxnames=99,url=false]{biblatex} 
\DeclareNameAlias{sortname}{family-given}
\DeclareNameAlias{default}{family-given}

\newtheorem{theorem}{Theorem}[section]
\newtheorem{lemma}[theorem]{Lemma}

\newtheorem{remark}[theorem]{Remark}
\newtheorem{corollary}[theorem]{Corollary}

\newtheorem{proposition}[theorem]{Proposition}
\newtheorem{example}[theorem]{Example}
\newtheorem{definition}[theorem]{Definition}

\begin{document}

\title{Analytic integration of metric-valued functions in Lipschitz free spaces}

\author[R. Arnau]{Roger Arnau \orcidlink{0000-0003-2544-8875}}
\address[Roger Arnau]{Instituto Universitario de Matem\'atica Pura y Aplicada\\
Universitat Polit\`ecnica de Val\`encia\\
Camino de Vera s/n, 46022 Valencia, Spain}
\email{ararnnot@upvnet.es}

\author[\'A. Gons\'alez Cort\'es]{\'Alvaro Gonz\'alez Cort\'es$^*$ \orcidlink{0009-0001-2328-2173}}
\address[\'Alvaro Gonz\'alez Cort\'es]{Instituto Universitario de Matem\'atica Pura y Aplicada\\
Universitat Polit\`ecnica de Val\`encia\\
Camino de Vera s/n, 46022 Valencia, Spain}
\email{agoncor@upv.es}

\author[E. A. S\'anchez P\'erez]{Enrique A. S\'{a}nchez P\'{e}rez \orcidlink{0000-0001-8854-3154}}
\address[Enrique A. S\'{a}nchez P\'{e}rez]{Instituto Universitario de Matem\'atica Pura y Aplicada\\
Universitat Polit\`ecnica de Val\`encia\\
Camino de Vera s/n, 46022 Valencia, Spain}
\email{easancpe@mat.upv.es}

\thanks{$^*$ Corresponding author}

\begin{abstract}

    We develop an integration theory for functions taking values in a
    metric space. Following a Bochner-type construction, we define the concept of free integral as an element of the Lipschitz-free space $\mathcal{F}(M)$. We establish the main properties of this integral, including duality formulas, and the study of the resulting space of free integrable functions. We also cover when the metric space is a Banach space: in this setting, the free integral has an interpretable decomposition generalising the Bochner integral. We then connect the free integral with the geometry of $\mathcal{F}(M)$ by showing that it always produces convex integrals of molecules. This allows to study extremal properties within the unit ball of $\mathcal{F}(M)$. Finally, we provide a detailed example to illustrate the framework we develop.
    
    \textbf{Keywords:} Bochner integral, Lipschitz function, Lipschitz free space, metric space, measure theory.
\end{abstract}

\maketitle

\section{Introduction}
Integration of vector-valued functions with respect to scalar measures has become a fundamental tool in modern Functional Analysis. The classical theories of Bochner and Pettis integration found important applications from the outset, but their decisive impact emerged when they were deeply connected to the geometry of Banach spaces through the Radon--Nikod\'ym theory in the 1970s. A systematic account of this interplay between vector measures, integration, and Banach space geometry can be found in \cite{diestelmeasures}. In parallel, the dual
framework (namely, integration of scalar-valued functions with respect to vector measures) proved to be especially fruitful in the theory of operators on Banach function lattices. In this setting, function spaces, vector measures, integrals, and operators can be understood as different manifestations of closely related mathematical structures; see \cite{orsp} for a comprehensive treatment.

Currently, a major research direction in analysis consists of transferring classical linear tools into the nonlinear setting. Metric analysis, strongly connected to Functional Analysis through linearisation techniques for Lipschitz mappings and the theory of Lipschitz-free spaces (also known as Arens--Eells spaces), has attracted sustained attention in recent years. Given a pointed metric space $(M,d,\theta)$, the Lipschitz-free space $\mathcal{F}(M)$ is a
Banach space that contains an isometric copy of $M$ and satisfies a universal property: every Lipschitz map from $M$ into a Banach space extends uniquely to a bounded linear operator on $\mathcal{F}(M)$. This mechanism allows one to embed nonlinear structures into linear ones, preserving enough geometry to apply functional-analytic methods. We refer to \cite{weaverlipschitz} for a comprehensive study of Lipschitz-free spaces and to \cite{godefroy2015,godefroylipschitz} for surveys on their applications.

The problem of extending the notion of integration to functions taking values in a general metric space has been approached from several perspectives. For example, geometric concepts of integrals in metric spaces have been developed to determine the centre of mass of a probability distribution. A classical line of work defines the mean of a metric-valued random variable as the minimiser of the expected squared distance: given a function $f\colon\Omega\to M$ on a probability space, the
\emph{Fr\'echet mean} is
\begin{equation}\label{eq:frechet}
    \bar{x} = \operatorname*{argmin}_{y\in M}
    \int_\Omega d^2(f(\omega),y)\,d\mu(\omega).
\end{equation}
This approach is rooted in the work of
Fr\'echet \cite{frechet1948elements}, and it has been developed by Sturm
\cite{sturm2003probability} in spaces of nonpositive curvature and further
studied in various geometric settings (see for example \cite{ohta2012barycenters}). 

However, the aim of this paper is to develop an integral for metric-valued functions from an analytical rather than geometric perspective, producing an object that captures the full distributional information and not merely its centre of mass. Our approach is to define the integral in the associated free space: for a metric-valued function $f\colon\Omega\to M$, we
construct its \emph{free integral} as an element of $\mathcal{F}(M)$. This is achieved by first integrating simple functions and then extending by approximation, guided by the Bochner integration theory in $\mathcal{F}(M)$. A central feature of this construction is that, when $M$ is a Banach space $X$, the free integral carries strictly more information than the Bochner integral, providing a meaningful generalisation.

The paper is organised as follows. Section \ref{sec:prelim} introduces the necessary background on measure spaces, Lipschitz functions, and Lipschitz-free spaces. 

In Section \ref{sec:freeint}, we develop the free integral following a Lebesgue-type approach: after adapting some relevant notions of measurability, we first define the free integral for simple functions and then extend it to pointwise limits of simple ones. We establish its main properties, including the identification with the Bochner integral, duality formulas, and functoriality under Lipschitz maps. We then study the Banach-valued case, revealing the additional information captured by the free integral beyond the Bochner integral. The section concludes with the study of the metric space of free integrable functions and the linearisation of the free integration operator.

Section \ref{sec:geometry} connects the free integral with the geometry of $\mathcal{F}(M)$. In particular, we link the free integral with the theory of convex integrals of molecules recently introduced in \cite{aliagaconvex}. We prove that the free integral provides a systematic source of convex integrals of molecules, connecting the integration theory with the extremal geometry of free spaces.

Finally, in Section \ref{sec:example}, we present a step-by-step example to illustrate the theoretical framework established throughout the paper. By explicitly describing the Lipschitz-free space of a finite metric tree and computing some free integrals, we show how these mathematical structures work in practice.

\section{Preliminaries}\label{sec:prelim}
In this section we present some background concepts needed for the rest of the paper. We begin with basic notions from measure theory, and then recall the theory of Lipschitz functions and Lipschitz-free spaces.

\subsection{Measure theory and the Bochner integral}
 Let $\Omega$ be a non-empty set, and let $\Sigma$ be a $\sigma$-algebra over $\Omega$. A finite measure is a function $\mu\colon\Sigma\to\mathbb R$ such that \begin{enumerate}[(M1)]
    \item $\mu(A)\geq 0$ for all $A\in\Sigma,$
    \item For every countable family $\{A_k\}_{k=1}^\infty\subseteq \Sigma$ of pairwise disjoint sets, it holds that $$\mu\left(\displaystyle\bigcup_{k=1}^\infty A_k\right)=\displaystyle\sum_{k=1}^\infty \mu(A_k).$$
\end{enumerate}
In this case, it is said that $(\Omega, \Sigma,\mu)$ is a \emph{finite measure space}, and every $A\in\Sigma$ is said to be \emph{measurable}. We emphasize that $\mu$ takes values in $\mathbb R$, so all measurable sets have finite measure. Throughout this paper, we fix such a measure space. A set $N\in\Sigma$ is called a \emph{null set} if $\mu(N)=0$. A pointwise property involving $\omega\in\Omega$ is said to hold \emph{almost everywhere} (abbreviated as $\mu$-a.e.) if the set of points where it fails is null. When only (M2) holds, $\mu$ is said to be a signed measure.



Since the inception of the theory of integration for scalar-valued functions (Riemann, Lebesgue), numerous theoretical frameworks have been proposed to generalize the notion of the integral to functions taking values in general vector spaces. One important such generalization is the Bochner integral, which we briefly present below. For a comprehensive treatment and more details, we refer the reader to \cite{aliprantis2006}.

Let $(X,\|\cdot\|_X$) be a Banach space. A function $S\colon\Omega\to X$ is called simple if assumes only a finite number of values $x_1,\ldots,x_n$ and $S^{-1}(\{x_i\})\in\Sigma$ for each $i$. In this case, $S$ can be represented as $S=\sum_{i=1}^n x_i\chi_{A_i}$. The integral of such a simple function is the vector $\int Sd\mu$ defined by $$\int Sd\mu=\sum_{i=1}^n \mu(A_i)x_i.$$
It is important to note that if $S=\sum_{j=1}^my_j\chi_{B_j}$ is another representation for $S$, then
$$\int Sd\mu=\sum_{j=1}^m\mu(B_j)y_j.$$

It is said that $f\colon\Omega\to X$ is strongly $\mu$-measurable (or strongly measurable if there is no confusion regarding the measure) if there exists a sequence of simple functions $\{S_k\}_k$ pointwise convergent to $f$ $\mu$-a.e.

Let $f\colon\Omega\to X$ be a strongly measurable function. Then, $f$ is Bochner integrable if there exist a sequence of simple functions $\{S_k\}_k$ such that $\|f-S_k\|_X$ is Lebesgue integrable for each $k$, and
$$\lim_n\int  \|f-S_k\|_Xd\mu=0.$$
In this case, the Bochner integral of $f$ is defined as
$$\int fd\mu=\lim_k \int S_kd\mu,$$
where the latter limit is taken with respect to the topology induced by the norm on $X$.

The Bochner integral is well defined, in the sense that its value is independent of the particular sequence of simple functions chosen to approximate $f$. Moreover, the set of all Bochner integrable functions is a vector space, and the Bochner integral is a linear operator from this space into $X$.

\subsection{Lipschitz functions and the free space}
Let $(M,d)$ be a metric space and $(X,\|\cdot\|_X)$ be a Banach space. A function $\varphi\colon M\to X$ is said to be Lipschitz if there exists a constant $L>0$ such that
$$\|\varphi(x)-\varphi(y)\|_X \leq L d(x,y)$$
for all $x,y \in M$. The infimum of all such constants $L$ is called the Lipschitz constant of $\varphi$, which is given by
$$\|\varphi\|_{Lip} := \sup \left\{ \frac{\|\varphi(x)-\varphi(y)\|_X}{d(x,y)} : x,y \in M, x \neq y \right\}.$$

The set of all Lipschitz functions from $M$ to $X$, denoted by $\operatorname{Lip}(M,X)$, is a vector subspace of the space of continuous functions $\mathcal{C}(M,X)$. While $\|\cdot\|_{Lip}$ defines a seminorm on $\operatorname{Lip}(M,X)$, it fails to be a norm as it vanishes on constant functions. In order to obtain a normed space, we consider $M$ as a pointed metric space $(M, d, \theta)$ by fixing a distinguished point $\theta \in M$. Now, define
$$\operatorname{Lip}_0(M,X) = \{ \varphi \in \operatorname{Lip}(M, X) : \varphi(\theta) = 0 \}.$$
In this setting, $\|\cdot\|_{Lip}$ is indeed a norm, and $(\operatorname{Lip}_0(M,X), \|\cdot\|_{Lip})$ is a Banach space. In the scalar case, we denote $\operatorname{Lip}_0(M) := \operatorname{Lip}_0(M, \mathbb{R})$. This space is often referred to as the Lipschitz dual of $M$ and is denoted by $M^\#$. For a comprehensive treatment of Lipschitz functions, the reader may refer to \cite{cobzaslipschitz}.

For each $x \in M$, let $\delta_x \colon \operatorname{Lip}_0(M) \to \mathbb{R}$ be the evaluation functional defined by $\delta_x(f) := f(x)$. These functionals are linear and continuous, thus $\delta_x \in \operatorname{Lip}_0(M)^*$. By equipping the dual space $\operatorname{Lip}_0(M)^*$ with its standard dual norm, the Lipschitz-free space over $M$ (or simply the free space of $M$) is defined as 
$$\mathcal{F}(M) := \overline{\operatorname{span}} \{ \delta_x : x \in M \}.$$
The induced norm on $\mathcal{F}(M)$, denoted by $\|\cdot\|_{\mathcal{F}}$, is given by
$$\|m\|_{\mathcal{F}} = \sup \left\{ \langle m, \varphi \rangle : \varphi \in \operatorname{Lip}_0(M), \|\varphi\|_{Lip} \leq 1 \right\}$$
for $m \in \mathcal{F}(M)$. Under this norm, $(\mathcal{F}(M), \|\cdot\|_{\mathcal{F}})$ is a Banach space. Furthermore, the property $d(x,y) = \|\delta_x - \delta_y\|_{\mathcal{F}}$ holds for all $x,y \in M$. Consequently, the map $\delta \colon M \to \mathcal{F}(M)$ defined by $x \mapsto \delta_x$ is an isometric embedding of the metric space $M$ into the Banach space $\mathcal{F}(M)$.

We present in what follows some relevant results concerning the free space.
\begin{theorem}
For any pointed metric space $M$, there exists an isometric isomorphism $\mathcal{F}(M)^* \cong \operatorname{Lip}_0(M)$.
\end{theorem}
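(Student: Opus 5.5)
The plan is to write down the natural map and check that it is a surjective isometry. Define $T\colon \operatorname{Lip}_0(M)\to\mathcal{F}(M)^*$ by $T\varphi(m)=\langle m,\varphi\rangle$, that is, $T\varphi$ is the restriction to $\mathcal{F}(M)$ of the evaluation functional $\operatorname{Lip}_0(M)^*\ni m\mapsto m(\varphi)$. Since $\mathcal{F}(M)\subseteq\operatorname{Lip}_0(M)^*$, this is a well-defined bounded linear map with $\|T\varphi\|\le\|\varphi\|_{Lip}$. We also have $T\varphi(\delta_x)=\varphi(x)$ for every $x\in M$.

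\textbf{Isometry.} For the reverse inequality, take $x\neq y$ in $M$. Then
$$|\varphi(x)-\varphi(y)|=|T\varphi(\delta_x-\delta_y)|\le\|T\varphi\|\,\|\delta_x-\delta_y\|_{\mathcal F}.$$
By the property stated above, $\|\delta_x-\delta_y\|_{\mathcal F}=d(x,y)$, so $|\varphi(x)-\varphi(y)|\le\|T\varphi\|\,d(x,y)$. Taking the supremum over $x\neq y$ gives $\|\varphi\|_{Lip}\le\|T\varphi\|$. Hence $T$ is an isometry, and in particular injective.

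If one wants to avoid quoting the identity $\|\delta_x-\delta_y\|_{\mathcal F}=d(x,y)$, its inequality ``$\le$'' is immediate from the definition of $\|\cdot\|_{Lip}$. Its inequality ``$\ge$'' follows by testing against $\varphi(z)=d(z,y)-d(\theta,y)$, which has Lipschitz constant at most $1$ and satisfies $\varphi(x)-\varphi(y)=d(x,y)$.

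\textbf{Surjectivity.} Given $\Phi\in\mathcal{F}(M)^*$, define $\varphi(x)=\Phi(\delta_x)$. Then $\varphi(\theta)=\Phi(\delta_\theta)=0$, because $\delta_\theta$ is the zero functional on $\operatorname{Lip}_0(M)$. Moreover,
$$|\varphi(x)-\varphi(y)|\le\|\Phi\|\,\|\delta_x-\delta_y\|_{\mathcal F}=\|\Phi\|\,d(x,y),$$
so $\varphi\in\operatorname{Lip}_0(M)$. The functionals $T\varphi$ and $\Phi$ agree on every $\delta_x$, hence on their linear span by linearity. Both are continuous, so they agree on the closure of that span, which is $\mathcal{F}(M)$ by definition. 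Thus $T\varphi=\Phi$.

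\textbf{Main obstacle.} There is no real obstacle here. The only point needing care is the identity $\|\delta_x-\delta_y\|_{\mathcal F}=d(x,y)$ (equivalently, the norming of $\mathcal{F}(M)$ by $\operatorname{Lip}_0(M)$), and that is settled by the explicit test function above. Everything else is formal, thanks to density of $\operatorname{span}\{\delta_x\}$.
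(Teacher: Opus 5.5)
Your proof is correct and complete. The paper gives no proof of this statement; it quotes it in the preliminaries as a standard fact, with a pointer to Weaver's book. What you wrote is the standard argument for the definition used there, namely $\mathcal{F}(M)=\overline{\operatorname{span}}\{\delta_x : x\in M\}\subseteq \operatorname{Lip}_0(M)^*$.

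One refinement. In both the isometry step and the surjectivity step you only use the inequality $\|\delta_x-\delta_y\|_{\mathcal F}\le d(x,y)$, which is immediate from the definition of the dual norm. The reverse inequality, which you establish with the test function $d(\cdot,y)-d(\theta,y)$, is never needed. So the point you flag as the ``main obstacle'' is not actually required, and the argument is even more formal than you suggest.
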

\begin{theorem}[Universal property] \label{thm:universal}
Let $M$ be a pointed metric space and $X$ be a Banach space. For every Lipschitz map $\varphi \in \operatorname{Lip}_0(M, X)$, there exists a unique bounded linear operator $T \colon \mathcal{F}(M) \to X$ such that $\varphi = T \circ \delta$. Moreover, this operator satisfies $\|T\| = \|\varphi\|_{Lip}$.
\end{theorem}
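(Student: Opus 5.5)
Uniqueness and existence will be handled separately, and the norm identity will follow from two inequalities.

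\emph{Uniqueness.} Suppose $T_1,T_2$ are bounded linear operators with $T_1\circ\delta=T_2\circ\delta=\varphi$. Then $T_1$ and $T_2$ agree on every $\delta_x$. By linearity they agree on $\operatorname{span}\{\delta_x:x\in M\}$. By continuity they agree on its closure, which is $\mathcal{F}(M)$ by definition.

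\emph{Existence.} First define $T$ on $\operatorname{span}\{\delta_x\}$ by
$$T\Big(\sum_{i=1}^n a_i\delta_{x_i}\Big)=\sum_{i=1}^n a_i\varphi(x_i).$$
The main obstacle is to show that this is well defined and bounded; both follow from one estimate. Let $m=\sum_i a_i\delta_{x_i}$ and fix $x^*\in X^*$ with $\|x^*\|\le 1$. The scalar function $x^*\circ\varphi$ lies in $\operatorname{Lip}_0(M)$, since $\varphi(\theta)=0$, and $\|x^*\circ\varphi\|_{Lip}\le\|\varphi\|_{Lip}$. Hence
$$\Big\langle x^*,\sum_i a_i\varphi(x_i)\Big\rangle=\langle m, x^*\circ\varphi\rangle\le \|m\|_{\mathcal{F}}\,\|\varphi\|_{Lip},$$
using the definition of $\|\cdot\|_{\mathcal F}$ as a dual norm on $\operatorname{Lip}_0(M)$. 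Taking the supremum over $x^*$ (Hahn--Banach) gives
$$\Big\|\sum_i a_i\varphi(x_i)\Big\|_X\le\|\varphi\|_{Lip}\|m\|_{\mathcal F}.$$
Consequently, if two representations define the same functional $m$, their difference is the zero functional, so the difference of their images has norm $0$. Thus $T$ is well defined, linear, and bounded with $\|T\|\le\|\varphi\|_{Lip}$. Since $X$ is complete, $T$ extends uniquely by continuity to $\mathcal{F}(M)$ with the same norm bound. By construction $T\circ\delta=\varphi$.

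\emph{Norm equality.} For the reverse inequality, take $x\neq y$ in $M$. Then
$$\|\varphi(x)-\varphi(y)\|_X=\|T(\delta_x-\delta_y)\|_X\le\|T\|\,\|\delta_x-\delta_y\|_{\mathcal F}=\|T\|\,d(x,y),$$
using the isometry property $\|\delta_x-\delta_y\|_{\mathcal F}=d(x,y)$ recalled above. Dividing by $d(x,y)$ and taking the supremum gives $\|\varphi\|_{Lip}\le\|T\|$, so $\|T\|=\|\varphi\|_{Lip}$.
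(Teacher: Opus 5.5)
Your proof is correct and complete. The paper gives no proof of this statement to compare against: it is recalled in the preliminaries as a standard fact about free spaces, with Weaver's book as reference. Your argument is the usual proof for the definition the paper adopts, where $\mathcal F(M)$ is the closed span of the evaluations inside $\operatorname{Lip}_0(M)^*$.

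What makes it work is that your estimate $\bigl\|\sum_i a_i\varphi(x_i)\bigr\|_X\le\|\varphi\|_{Lip}\bigl\|\sum_i a_i\delta_{x_i}\bigr\|_{\mathcal F}$ is proved for an arbitrary formal representation. Applied to the difference of two representations it gives well-definedness, and applied to a single one it gives boundedness. Hahn--Banach enters only through norming functionals on $X$.

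In the Arens--Eells construction (as in Weaver's book), $\mathcal F(M)$ is the completion of finitely supported molecules under the norm $\inf\{\sum_i|a_i|\,d(p_i,q_i)\}$, taken over representations $\sum_i a_i(\delta_{p_i}-\delta_{q_i})$. There the roles are reversed. $T$ is automatically well defined, and the bound follows from the triangle inequality alone: $\|\sum_i a_i(\varphi(p_i)-\varphi(q_i))\|_X\le\|\varphi\|_{Lip}\sum_i|a_i|\,d(p_i,q_i)$. The duality work moves instead into showing that this norm coincides with the dual norm on $\operatorname{Lip}_0(M)$, i.e.\ that the two definitions of $\mathcal F(M)$ agree.

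Finally, for $\|\varphi\|_{Lip}\le\|T\|$ you only use $\|\delta_x-\delta_y\|_{\mathcal F}\le d(x,y)$, which is immediate from the dual-norm definition. So your argument does not rely on the nontrivial half of the isometry.
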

\begin{theorem}\label{thm:universaltwo}
Let $(M, d_M, \theta_M)$ and $(N, d_N, \theta_N)$ be pointed metric spaces, and let $g \colon M \to N$ be a Lipschitz map such that $g(\theta_M) = \theta_N$. Let $\delta_M$ and $\delta_N$ denote the canonical embeddings of $M$ and $N$ into $\mathcal{F}(M)$ and $\mathcal{F}(N)$, respectively. Then, there exists a unique bounded linear operator $T_g \colon \mathcal{F}(M) \to \mathcal{F}(N)$ such that $T_g \circ \delta_M = \delta_N \circ g$. Furthermore, the operator satisfies $\|T_g\| = \|g\|_{Lip}$.
\end{theorem}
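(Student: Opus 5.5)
The plan is to deduce the statement from the universal property (Theorem~\ref{thm:universal}) applied to the composition $\delta_N\circ g\colon M\to\mathcal{F}(N)$. First, this map lies in $\operatorname{Lip}_0(M,\mathcal{F}(N))$. The base point is preserved: $\delta_N(g(\theta_M))=\delta_N(\theta_N)$, and $\delta_N(\theta_N)=0$ as an element of $\operatorname{Lip}_0(N)^*$, because every $\psi\in\operatorname{Lip}_0(N)$ vanishes at $\theta_N$. The Lipschitz constant is controlled because $\delta_N$ is an isometric embedding, so
\[
\|\delta_N(g(x))-\delta_N(g(y))\|_{\mathcal F}=d_N(g(x),g(y))\le \|g\|_{Lip}\, d_M(x,y).
\]
Hence $\|\delta_N\circ g\|_{Lip}=\|g\|_{Lip}$, with equality (not just $\le$) again because $\delta_N$ is isometric.

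Next, Theorem~\ref{thm:universal} with $X=\mathcal{F}(N)$ and $\varphi=\delta_N\circ g$ gives a unique bounded linear operator $T_g\colon\mathcal{F}(M)\to\mathcal{F}(N)$ with $T_g\circ\delta_M=\delta_N\circ g$. It also gives $\|T_g\|=\|\delta_N\circ g\|_{Lip}=\|g\|_{Lip}$, which settles existence, uniqueness and the norm identity at once.

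If one prefers not to rely on the norm part of Theorem~\ref{thm:universal}, the estimate can be checked by hand. Define $T_g$ on $\operatorname{span}\{\delta_x\}$ by linearity. For $m=\sum a_i\delta_{x_i}$ and $\psi\in\operatorname{Lip}_0(N)$ with $\|\psi\|_{Lip}\le1$, we have $\langle T_gm,\psi\rangle=\langle m,\psi\circ g\rangle$. Since $\psi\circ g\in\operatorname{Lip}_0(M)$ has constant at most $\|g\|_{Lip}$, this is at most $\|g\|_{Lip}\|m\|_{\mathcal F}$. This single computation shows both that $T_g$ is well defined on the span (if $m=0$ then $T_gm$ pairs to zero with every $\psi$) and that it is bounded. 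It then extends by density to $\mathcal{F}(M)$. Uniqueness holds because any bounded linear operator is determined by its values on the dense subspace $\operatorname{span}\{\delta_x\}$. For the reverse inequality, use $\|T_g(\delta_x-\delta_y)\|=d_N(g(x),g(y))$ and $\|\delta_x-\delta_y\|=d_M(x,y)$, then take the supremum over $x\neq y$.

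The only delicate point is well-definedness on finite linear combinations, since representations of $m$ are not unique. The duality pairing above handles it, so I expect no genuine obstacle.
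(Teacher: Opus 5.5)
Your argument is correct. The paper does not prove this statement: it is recalled in the preliminaries without proof, as a standard fact about free spaces. Your route is the standard one, applying Theorem~\ref{thm:universal} to $\delta_N\circ g$ and using that $\delta_N$ is isometric to get $\|\delta_N\circ g\|_{Lip}=\|g\|_{Lip}$. The paper uses the same device later in its change-of-base-point proposition, where it applies the universal property to a map into $\mathcal{F}_{\theta'}(M)$.

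Your by-hand check is also sound. It amounts to taking $T_g$ as the restriction to $\mathcal{F}(M)$ of the adjoint of the composition operator $\psi\mapsto\psi\circ g$, which makes the result independent of Theorem~\ref{thm:universal}.
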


A comprehensive study of free spaces can be found in \cite{weaverlipschitz}. We conclude this section by presenting alternative descriptions of the free space. For this part, we will follow \cite{godefroylipschitz}.

Let $(Z, \|\cdot\|_Z)$ be a Banach space. Since the identity map $\mathrm{Id}_Z$ is $1$-Lipschitz and satisfies $\mathrm{Id}_Z(0)=0$, the Universal Property (Theorem \ref{thm:universal}) guarantees the existence of a unique bounded linear operator $\beta \colon \mathcal{F}(Z) \to Z$ with $\|\beta\|=1$ such that $\beta(\delta_z) = z$ for all $z \in Z$. This operator $\beta$ is known as the \emph{barycenter map}.

Define the operator $P \colon \mathcal{F}(Z) \to \mathcal{F}(Z)$ as $P := \delta \circ \beta$,
\begin{center}
\begin{tikzcd}[column sep = 3em, row sep = 2.6em]
    Z \arrow[r, "Id"] \arrow[d, hook, "\delta"] & Z \arrow[d, hook, "\delta"] \\
    \mathcal F(Z) \arrow[r, "P"] \arrow[ru, "\beta"] & \mathcal F(Z)
\end{tikzcd}
\end{center}
\noindent and note that 
$$P^2 = (\delta \circ \beta) \circ (\delta \circ \beta) = \delta \circ (\beta \circ \delta) \circ \beta = \delta \circ \mathrm{Id}_Z \circ \beta = P.$$
Consequently, for any $m \in \mathcal{F}(Z)$, we have the decomposition $m=Pm+(m-Pm)$ and hence $\mathcal F(Z)=\operatorname{Im}P+\ker\beta$ since
$$\beta(m-Pm)=\beta m-\beta(Pm)=\beta m-(\beta\delta)\beta m=\beta m-\beta m=0.$$

Suppose now that $m \in \mathcal{F}(Z)$ admits two such decompositions: $$m=Pu+h=P\tilde u+\tilde h,$$
where $u,\tilde u\in\mathcal F(Z)$ and $h,\tilde h\in\ker\beta$. Then, using the definition of $P$, we compute
$$\begin{aligned}
Pu &= P^2u = \delta(\beta(Pu)) = \delta(\beta(Pu + h)) \\
   &= \delta(\beta(P\tilde u + \tilde h)) = \delta(\beta(P\tilde u)) = P\tilde u,
\end{aligned}$$
It follows that $h = \tilde h$, and hence the decomposition is unique. Therefore, we conclude
$$\mathcal F(Z)=\operatorname{Im}P\oplus \ker\beta.$$
However, note that for every $z\in Z$ it holds $\delta_z=\delta(\beta(\delta_z))=P\delta_z$.That is, $\delta(Z)\subseteq\operatorname{Im}P$.
Moreover, for all $m\in\operatorname{Im}P$ there exist $u\in\mathcal F(Z)$ such that $m=Pu=\delta(\beta(u))$. Since $\beta(u)\in Z$ it follows that $m\in\delta(Z)$, and so $\operatorname{Im}P\subseteq \delta(Z)$.
We conclude that $\operatorname{Im}P=\delta(Z)$ and therefore
$$\mathcal F(Z)=\delta(Z)\oplus\ker\beta.$$

Although each element $m \in \mathcal F(Z)$ admits a unique decomposition $m = \delta_z + h$ with $z \in Z$ and $h \in \ker\beta$, this does not imply that $\delta(Z)$ is a linear subspace of $\mathcal F(Z)$, since $P$ is not linear. Nevertheless, the quotient $\mathcal F(Z) / \ker \beta$ can be naturally identified with $\delta(Z)$ through the bijection
\begin{align*}
    \Lambda: \mathcal F(Z) / \ker \beta & \to \delta(Z) \\
    m + \ker \beta & \mapsto P(m).
\end{align*}
This map is an isometry when the quotient is endowed with its usual norm $\| m + \ker \beta \| = \inf \{ \| m + h \|_{\mathcal F(Z)}: h \in \ker \beta \}$.
In this way, $\Lambda$ induces a linear structure of $\mathcal F(Z) / \ker \beta$ onto $\delta(Z)$, which is given by $\delta_x \boxplus \delta_y = \delta_{x+y}$ and $\lambda \boxdot \delta(x) = \delta_{\lambda x}$.
This linear structure coincides with the one of $Z$ via the barycenter map $\beta$, but differs from the one inherited from $\mathcal F(Z)$ to $\delta(Z) \subseteq \mathcal F(Z)$.
This discrepancy between both structures, and the failure of $\delta$ and $P$ being linear, is captured by $\ker \beta$, as shown in the next proposition.

\begin{proposition}
    Let $(Z,\|\cdot\|_Z)$ be a Banach space, and consider the barycenter map $\beta\colon\mathcal F(Z)\to Z$. Then, $\ker\beta=\overline{\mathrm{span}}\{\delta_x+\delta_y-\delta_{x+y}\colon x,y\in Z\}$.
\end{proposition}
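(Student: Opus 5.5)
Let $W := \overline{\mathrm{span}}\{\delta_x+\delta_y-\delta_{x+y}\colon x,y\in Z\}$. I will prove $W=\ker\beta$ by two inclusions. The inclusion $W\subseteq\ker\beta$ is immediate: $\beta$ is linear with $\beta(\delta_z)=z$, so
$$\beta(\delta_x+\delta_y-\delta_{x+y})=x+y-(x+y)=0,$$
and $\ker\beta$ is a closed linear subspace because $\beta$ is bounded (Theorem \ref{thm:universal}). Hence it contains the closed span of these elements.

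For the reverse inclusion I will use duality via Hahn--Banach and the identification $\mathcal F(Z)^*\cong\operatorname{Lip}_0(Z)$. Suppose, towards a contradiction, that some $m\in\ker\beta$ satisfies $m\notin W$. Since $W$ is a closed subspace, there is $\varphi\in\operatorname{Lip}_0(Z)$ with $\varphi|_W=0$ and $\langle m,\varphi\rangle\neq0$. The condition $\varphi|_W=0$ gives $\varphi(x)+\varphi(y)=\varphi(x+y)$ for all $x,y\in Z$, so $\varphi$ is additive. A Lipschitz function is continuous, and a continuous additive map between real vector spaces is $\mathbb R$-linear: it is $\mathbb Q$-homogeneous by additivity, and density of $\mathbb Q$ in $\mathbb R$ extends this to real scalars. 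Moreover $|\varphi(x)|=|\varphi(x)-\varphi(0)|\le\|\varphi\|_{Lip}\|x\|$, so $\varphi=x^*\in Z^*$.

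Next I show that $\varphi=x^*\circ\beta$ as functionals on $\mathcal F(Z)$. Both are bounded linear functionals on $\mathcal F(Z)$, and on each $\delta_z$ they give $x^*(z)=x^*(\beta\delta_z)$. Since $\mathcal F(Z)$ is the closed span of $\{\delta_z\}$, the two functionals coincide. Therefore
$$\langle m,\varphi\rangle=x^*(\beta m)=0,$$
which is a contradiction. Hence $\ker\beta\subseteq W$.

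The main point requiring care is the automatic linearity of additive Lipschitz functions, together with the identification of functionals on $\mathcal F(Z)$ with their values on the $\delta_z$. Both are standard, so I do not expect a genuine obstacle.
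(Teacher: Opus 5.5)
Your proof is correct, but it runs on the dual side of the paper's argument. The paper stays in $\mathcal F(Z)$ and never uses Hahn--Banach. It passes to the quotient $\pi\colon\mathcal F(Z)\to\mathcal F(Z)/W$ and factors $\beta=\tilde\beta\circ\pi$. It then notes that $L=\pi\circ\delta\colon Z\to\mathcal F(Z)/W$ is additive and Lipschitz, hence linear, and checks on the elements $\pi(\delta_z)$ that $L\circ\tilde\beta$ is the identity, so $\tilde\beta$ is injective and $\ker\beta\subseteq W$. You instead separate with Hahn--Banach through $\mathcal F(Z)^*\cong\mathrm{Lip}_0(Z)$ and apply ``additive and Lipschitz implies linear'' only to scalar functions. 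This shows that the annihilator $W^\perp\subseteq\mathrm{Lip}_0(Z)$ lies in $Z^*$, and in fact equals $Z^*$, since every $x^*$ trivially kills $W$. Your step $\varphi=x^*\circ\beta$ is exactly the statement that $\beta^*$ is the inclusion $Z^*\hookrightarrow\mathrm{Lip}_0(Z)$, which the paper proves separately in its next proposition, $\ker\beta=(Z^*)_\perp$. Your argument thus effectively merges the two results into $\ker\beta=(Z^*)_\perp\subseteq(W^\perp)_\perp=W$. The paper's route buys slightly more: together with the trivial identity $\tilde\beta\circ L=\mathrm{Id}_Z$, it shows $\tilde\beta\colon\mathcal F(Z)/\ker\beta\to Z$ is an isomorphism, isometric since both maps have norm at most $1$. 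This is precisely the identification of $\mathcal F(Z)/\ker\beta$ with $Z$ discussed just before the proposition. Your route needs only the scalar Cauchy equation and gives the clean dual description $W^\perp=Z^*$.
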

\begin{proof}
    Let $E := \overline{\mathrm{span}}\{\delta_x+\delta_y-\delta_{x+y} \colon x,y \in Z\}$. We first note that $E \subseteq \ker\beta$. Indeed, by the definition of the barycenter map $\beta$, we have$$\beta(\delta_x+\delta_y-\delta_{x+y}) = x + y - (x+y) = 0,$$and the inclusion follows by the linearity and continuity of $\beta$.

    To prove the reverse inclusion, consider the canonical quotient map $\pi\colon \mathcal F(Z) \to \mathcal F(Z)/E$. Since $E$ is a subspace of $\ker\beta$, the Universal Property of Quotient spaces yields a (unique) bounded linear continuous map $\tilde\beta\colon\mathcal F(Z)/E\to Z$ such that $\beta=\tilde\beta\circ \pi$.

    Define now $L := \pi\circ \delta\colon Z \to \mathcal F(Z)/E$. Observe that $L$ is additive, since for any $x, y \in Z$ we get
    $$L(x+y) = \pi(\delta_{x+y}) = \pi(\delta_x) + \pi(\delta_y) = L(x) + L(y),$$
    which holds since $\delta_x+\delta_y-\delta_{x+y} \in E$. Furthermore, $L$ is Lipschitz because it is the composition of the 1-Lipschitz map $\pi$ and the isometry $\delta$. Hence, $L$ is linear and continuous.

    For any $z \in Z$ we have
    $$(L \circ \tilde{\beta})(\pi(\delta_z)) = L(\beta(\delta_z)) = L(z) = \pi(\delta_z).$$
    By linearity and density, $L \circ \tilde{\beta}$ is the identity operator on $\mathcal F(Z)/E$. In particular, this implies that the operator $\tilde{\beta}$ is injective.

    Finally, take any $m \in \ker\beta$. Then $\tilde{\beta}(\pi(m)) = \beta(m) = 0$. By the injectivity of $\tilde{\beta}$ we deduce $\pi(m) = 0$. That is, $m \in E$. Therefore, $\ker\beta \subseteq E$, concluding the proof.
\end{proof}

\begin{proposition}
    Let $(Z,\|\cdot\|_Z)$ be a Banach space, and consider the barycenter map $\beta\colon\mathcal F(Z)\to Z$. Then, $\ker\beta=(Z^*)_\perp$.
\end{proposition}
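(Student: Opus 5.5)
The idea is to interpret $Z^*$ as a subset of $\operatorname{Lip}_0(Z)=\mathcal F(Z)^*$ and then compute pre-annihilators. Every $x^*\in Z^*$ is linear and bounded, hence Lipschitz with $x^*(0)=0$. So $Z^*\subseteq \operatorname{Lip}_0(Z)$, and under the identification $\mathcal F(Z)^*\cong\operatorname{Lip}_0(Z)$ we may set
$$(Z^*)_\perp=\{m\in\mathcal F(Z)\colon \langle m,x^*\rangle=0 \text{ for all } x^*\in Z^*\}.$$
The key identity I will establish first is
$$\langle m, x^*\rangle = x^*(\beta m)\qquad\text{for all } m\in\mathcal F(Z),\ x^*\in Z^*,$$
that is, $\beta^*x^*=x^*$ viewed as an element of $\operatorname{Lip}_0(Z)$.

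To prove the identity, I check it on the generators $\delta_z$. There $\langle\delta_z,x^*\rangle=x^*(z)=x^*(\beta\delta_z)$. Both sides are linear and continuous in $m$: the left side because $x^*\in\mathcal F(Z)^*$, the right side because $\beta$ is bounded and linear. Since $\operatorname{span}\{\delta_z\}$ is dense in $\mathcal F(Z)$, the identity holds for every $m$.

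Both inclusions then follow directly.
\begin{enumerate}[(i)]
\item If $m\in\ker\beta$, then $\langle m,x^*\rangle=x^*(0)=0$ for every $x^*\in Z^*$, so $m\in(Z^*)_\perp$.
\item If $m\in(Z^*)_\perp$, then $x^*(\beta m)=0$ for all $x^*\in Z^*$, and Hahn--Banach gives $\beta m=0$.
\end{enumerate}

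The only delicate point is notational. We must make precise that $Z^*$ sits inside $\mathcal F(Z)^*$ through $\beta^*$, and that this embedding is the inclusion $Z^*\hookrightarrow\operatorname{Lip}_0(Z)$ composed with the isometry $\mathcal F(Z)^*\cong\operatorname{Lip}_0(Z)$. Once the duality pairing is fixed by $\langle\delta_z,\varphi\rangle=\varphi(z)$, the density argument above settles this.

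An alternative route uses the previous Proposition. Every $x^*$ annihilates $\delta_x+\delta_y-\delta_{x+y}$ by additivity, which gives $\ker\beta\subseteq(Z^*)_\perp$ again. The Hahn--Banach argument above is still what gives the reverse inclusion.
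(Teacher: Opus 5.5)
Your proof is correct and is essentially the paper's: both establish $\beta^*x^*=x^*$, i.e.\ $\langle m,x^*\rangle=x^*(\beta m)$, by checking it on the $\delta_z$ and extending by density. The paper then cites the general identity $\ker T=(\operatorname{Im}T^*)_\perp$, whereas your two inclusions, with Hahn--Banach giving the reverse one, prove that identity directly for $T=\beta$.
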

\begin{proof}
    Recall that for any bounded and linear map between Banach spaces $T\colon X\to Y$, it holds $\ker T=(\operatorname{Im}T^*)_\perp$. Then, if we prove that $\operatorname{Im}\beta^*=Z^*$ we get the result.

    Note that $\beta^*$ is the adjoint map $\beta^*\colon Z^*\to\mathcal F(Z)^*$ defined by
    $$\langle m,\beta^*z^*\rangle=\langle\beta(m),z^*\rangle \quad \text{for all} \ z^*\in Z^*, m\in\mathcal F(Z).$$
    In particular, for any $z\in Z$ yields
    $$\langle \delta_z, \beta z^*\rangle=\langle \beta(\delta_z),z^*\rangle=\langle z,z^*\rangle.$$

    On the other hand, viewing $z^*$ as an element of $\mathrm{Lip}_0(Z)$ (since every bounded linear functional on $Z$
    is Lipschitz and vanishes at $0$), we also have $z^*(\delta_z)=z^*(z)$. Therefore, $\beta^*z^*$ and $z^*$
    coincide on $\{\delta_z:z\in Z\}$. Since both are continuous linear functionals on $\mathcal{F}(Z)$ and $\operatorname{span}\{\delta_z:z\in Z\}$ is dense in $\mathcal{F}(Z)$, we conclude $\beta^*z^*=z^*$ in $\mathrm{Lip}_0(Z)$.

    This shows that $\beta^*$ is the inclusion $Z^*\hookrightarrow\mathrm{Lip}_0(Z)$. In particular,
    $\operatorname{Im}\beta^*=Z^*$, and hence
    $\ker\beta=(\operatorname{Im}\beta^*)_\perp=(Z^*)_\perp$.
\end{proof}



  
\section{The free integral}\label{sec:freeint}
Let $(\Omega, \Sigma,\mu)$ be a finite measure space, and let $(M,d,\theta)$ be a pointed metric space. In this section, we aim to define an integral for metric-valued functions $f:\Omega\to M$. To this end, we will first recall some concepts of measurability, before providing an explicit and motivated construction of our proposed integral. Although our definition can be characterised by standard means, we will present an exhaustive motivation for the sake of completeness. Having established integration for such metric-valued functions, we will study the extension of some well-known properties of vector-valued integrals and demonstrate some relations between the two concepts.


Let us start this section by recalling some basic concepts about integrability. The formal construction of any integral typically begins with the class of simple functions. A vector-valued simple function is defined as a finite linear combination of indicator functions of measurable sets. However, they can also be defined in a similar way in metric spaces, without the need for linear combinations.
\begin{definition}
    A function $S\colon\Omega\to M$ is called simple if there exists $x_1,\ldots,x_n\in M$ and a pairwise disjoint measurable partition $A_1,\ldots,A_n\in\Sigma$ 
    of $\Omega$ such that $$S(\omega)=x_i \quad \text{if} \ \omega\in A_i, \ \text{for} \ i=1,\ldots,n.$$
    We denote the set of all simple functions from $\Omega$ to $M$ by $\mathrm{SF}(M)$.
\end{definition}

Furthermore, another preliminary requirement for the formal development of the integral is the rigorous definition of measurability. The well-known notions of strong and Borel measurability are associated with concepts such as convergence and open sets, and can therefore be defined in the usual way when the function takes values in a topological space. However, weak measurability is associated with the dual of a Banach space, so a different definition is required in this instance.
\begin{definition}
A function $f\colon\Omega\to M$ is said to be:
\begin{itemize}
    \item Borel measurable if $f^{-1}(U)\in\Sigma$ for every open set $U\subseteq M$. The space of such functions is denoted by $L^0_\mathcal B(M)$.
    \item Strongly measurable (or Bochner measurable) if there exists a sequence of simple functions convergent pointwise to $f$ $\mu$-a.e..  We write $L^0_S(M)$ for the space of strongly measurable functions.
    \item Weakly measurable if for every $\varphi\in\mathrm{Lip}_0(M)$ the composition $\varphi\circ f\colon\Omega\to\mathbb R$ is Lebesgue measurable. The space of weakly measurable functions is denoted by $L^0_W(M)$.
\end{itemize}
\end{definition}

Note that the simple functions match all three concepts of measurability. Indeed, for any $S\in \text{SF}(M)$ and any open set $U\subset M$, the preimage $S^{-1}(U)$ is the union of some of the measurable sets that defines $S$. Hence, $S^{-1}(U)$ is in $\Sigma$ and therefore every simple function is Borel measurable. Moreover, simple functions are strongly measurable by definition (consider the constant sequence of the simple function itself). Finally, for each $\varphi\in\text{Lip}_0(M)$, the composition $\varphi\circ S$ is a real-valued simple function, hence Lebesgue measurable. Thus, simple functions are also weakly measurable.

\begin{proposition}\label{prop:measureinclusions}
    Let $(M,d,\theta)$ be a pointed metric space.
    \begin{enumerate}
        \item Borel and weak measurability coincide:
        $$L^0_{\mathcal B}(M)=L^0_{W}(M).$$
        \item If the measure space $(\Omega,\Sigma,\mu)$ is complete, then every
        strongly measurable function is Borel (weakly) measurable:
        $$L^0_{S}(M)\subseteq L^0_{\mathcal B}(M).$$
    \end{enumerate}
    Therefore, when $(\Omega,\Sigma,\mu)$ is complete we obtain the chain
    $L^0_{S}(M)\subseteq L^0_{\mathcal B}(M)=L^0_{W}(M)$.
\end{proposition}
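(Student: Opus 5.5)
For part (1), I will prove the two inclusions separately. Suppose first that $f$ is Borel measurable and take $\varphi\in\mathrm{Lip}_0(M)$. Then $\varphi$ is continuous, so for every open $V\subseteq\mathbb R$ the set $\varphi^{-1}(V)$ is open in $M$. Hence $(\varphi\circ f)^{-1}(V)=f^{-1}(\varphi^{-1}(V))\in\Sigma$, which shows that $\varphi\circ f$ is measurable.

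For the converse, suppose $f$ is weakly measurable and let $U\subseteq M$ be open. I will build a Lipschitz function vanishing at $\theta$ whose positivity set is exactly $U$. If $U=M$, then $f^{-1}(U)=\Omega$ and there is nothing to prove. Otherwise set $\psi(x)=d(x,M\setminus U)$, which is $1$-Lipschitz and satisfies $U=\{\psi>0\}$ because $M\setminus U$ is closed. In general $\psi(\theta)\neq 0$, so I use $\varphi=\psi-\psi(\theta)\in\mathrm{Lip}_0(M)$ instead. Then
\[
f^{-1}(U)=\{\omega:\varphi(f(\omega))>-\psi(\theta)\}\in\Sigma
\]
by weak measurability. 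The only delicate point in part (1) is this normalisation at the base point: weak measurability only tests functions in $\mathrm{Lip}_0(M)$, and subtracting the constant $\psi(\theta)$ resolves it. I read ``Lebesgue measurable'' as $\Sigma$-measurable, so that the two classes are compared with respect to the same $\sigma$-algebra.

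For part (2), let $S_k\to f$ pointwise on $\Omega\setminus N$ with $\mu(N)=0$. By part (1) it suffices to show that $\varphi\circ f$ is measurable for each $\varphi\in\mathrm{Lip}_0(M)$. Each $\varphi\circ S_k$ is a real simple function, hence measurable. By continuity of $\varphi$, $\varphi\circ S_k\to\varphi\circ f$ on $\Omega\setminus N$. Therefore $g:=\lim_k \varphi\circ S_k\cdot\chi_{\Omega\setminus N}$ is measurable, as a pointwise limit of measurable real functions, and it agrees with $\varphi\circ f$ off $N$. For any Borel set $B\subseteq\mathbb R$,
\[
(\varphi\circ f)^{-1}(B)=\bigl(g^{-1}(B)\cap(\Omega\setminus N)\bigr)\cup\bigl((\varphi\circ f)^{-1}(B)\cap N\bigr).
\]
The first piece lies in $\Sigma$, and the second is a subset of the null set $N$, so it lies in $\Sigma$ by completeness. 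Hence $\varphi\circ f$ is measurable, so $f$ is weakly measurable, and by part (1) it is Borel measurable.

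The main obstacle in part (2) is the exceptional null set. Completeness is used exactly to absorb the arbitrary behaviour of $f$ on $N$. Routing the argument through real-valued compositions avoids having to discuss limits of Borel measurable maps into a possibly non-separable $M$ directly. The chain $L^0_S(M)\subseteq L^0_{\mathcal B}(M)=L^0_W(M)$ for complete $(\Omega,\Sigma,\mu)$ then follows by combining parts (1) and (2).
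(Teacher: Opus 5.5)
Your proposal is correct and follows the paper's proof essentially step for step. Part (1) uses the same base-point-normalised function $d(\cdot,U^c)-d(\theta,U^c)$, and part (2) uses the same route through the real-valued compositions $\varphi\circ S_k\to\varphi\circ f$ together with completeness. Your explicit limit $g$ and the splitting over the null set $N$ simply spell out the completeness step that the paper states in a single line.
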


\begin{proof}
    \emph{(1)} Suppose first that $f\in L^0_{\mathcal B}(M)$, and fix $\varphi\in\operatorname{Lip}_0(M)$ and an open set $U\subseteq\mathbb R$. Since $\varphi$ is continuous, $\varphi^{-1}(U)$ is open in $M$. Therefore,
    $$(\varphi\circ f)^{-1}(U)=f^{-1}\bigl(\varphi^{-1}(U)\bigr)\in\Sigma.$$
    Thus $\varphi\circ f$ is measurable for every $\varphi\in\operatorname{Lip}_0(M)$,
    so $f\in L^0_W(M)$.

    Conversely, let $f\in L^0_W(M)$ and let $U\subseteq M$ be open. We assume without loss of generality $U\neq M$, so that $U^c\neq\emptyset$. The function
    $\varphi(\cdot)=d(\cdot,U^c)-d(\theta,U^c)$ belongs to $\operatorname{Lip}_0(M)$
    and satisfies
    $$U=\{x\in M:d(x,U^c)>0\}=\varphi^{-1}\bigl(-d(\theta,U^c),+\infty\bigr).$$
    Hence
    $$f^{-1}(U)=(\varphi\circ f)^{-1}\bigl(-d(\theta,U^c),+\infty\bigr)\in\Sigma,$$
    since the interval is open in $\mathbb R$ and $\varphi\circ f$ is measurable by
    the weak measurability of $f$. Therefore $f\in L^0_{\mathcal B}(M)$, and (1) follows.

    \emph{(2)} Let $f\in L^0_S(M)$, and let $\{S_k\}_k\subseteq\mathrm{SF}(M)$ converge
    pointwise to $f$ on $\Omega\setminus N$, where $\mu(N)=0$. Fix
    $\varphi\in\operatorname{Lip}_0(M)$. Note that each $\varphi\circ S_k$ is a real-valued simple
    function, and by continuity of $\varphi$ we get $\varphi\circ S_k\to\varphi\circ f$ on
    $\Omega\setminus N$. That is, $\varphi\circ f$ is pointwise limit $\mu$-a.e. of measurable functions. Since $(\Omega,\Sigma,\mu)$ is complete, $\varphi\circ f$
    is  measurable. As $\varphi\in\operatorname{Lip}_0(M)$ was arbitrary we conclude $f\in L^0_W(M)$.
\end{proof}

The equality $L^0_B(M)=L^0_W(M)$ can be understood as a Lipschitz analogue of the following classical fact: in metric spaces, the Baire $\sigma$-algebra (the smallest $\sigma$-algebra making all continuous real-valued functions measurable) coincides with the Borel $\sigma$-algebra (see \cite[Section 6.3]{bogachevmeasure}). In our setting, the role of continuous functions is played by Lipschitz functions in $\mathrm{Lip}_0(M)$. The equality $L^0_B(M)=L^0_W(M)$ holds because this subclass already generates the Borel $\sigma$-algebra on $M$. We present this result for the sake of completeness.
\begin{proposition}
    Let $(M,d,\theta)$ be a pointed metric space, and let $\mathcal B(M),\mathcal W(M)$ denote the Borel and Lipschitz $\sigma$-algebras, that is, $\mathcal W(M)$ is the smallest $\sigma$-algebra making all functions in $\mathrm{Lip}_0(M)$ measurable. Then, $\mathcal B(M)=\mathcal W(M)$.
\end{proposition}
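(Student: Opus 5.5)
We prove the two inclusions $\mathcal W(M)\subseteq\mathcal B(M)$ and $\mathcal B(M)\subseteq\mathcal W(M)$ separately, mirroring the argument of Proposition \ref{prop:measureinclusions}(1) but at the level of $\sigma$-algebras on $M$ rather than for a fixed function $f$.

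For $\mathcal W(M)\subseteq\mathcal B(M)$ we use minimality of $\mathcal W(M)$. Every $\varphi\in\mathrm{Lip}_0(M)$ is continuous. Hence $\varphi^{-1}(V)$ is open in $M$ for every open $V\subseteq\mathbb R$, so $\varphi$ is $\mathcal B(M)$-measurable. Thus $\mathcal B(M)$ is a $\sigma$-algebra making all functions of $\mathrm{Lip}_0(M)$ measurable, and since $\mathcal W(M)$ is the smallest such $\sigma$-algebra, $\mathcal W(M)\subseteq\mathcal B(M)$.

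For $\mathcal B(M)\subseteq\mathcal W(M)$, since $\mathcal B(M)$ is generated by the open sets, it suffices to show that every open $U\subseteq M$ lies in $\mathcal W(M)$.
\begin{itemize}
\item If $U=M$, this is trivial, since $M$ belongs to every $\sigma$-algebra on $M$.
\item If $U\neq M$, we take the same function as in Proposition \ref{prop:measureinclusions}, namely $\varphi(x)=d(x,U^c)-d(\theta,U^c)$. It is $1$-Lipschitz and vanishes at $\theta$, so $\varphi\in\mathrm{Lip}_0(M)$.
\end{itemize}
Since $U^c$ is closed, $d(x,U^c)>0$ exactly when $x\notin U^c$. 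Therefore
\[
U=\varphi^{-1}\bigl((-d(\theta,U^c),+\infty)\bigr).
\]
This is the preimage of a Borel set of $\mathbb R$ under a $\mathcal W(M)$-measurable function, so $U\in\mathcal W(M)$. Hence $\mathcal B(M)\subseteq\mathcal W(M)$, which completes the equality.

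There is no real obstacle here. The only point requiring care is the identity $U=\{x:d(x,U^c)>0\}$, which uses that $U^c$ is closed, so that distance zero to $U^c$ forces membership in $U^c$.
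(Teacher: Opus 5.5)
Your proof is correct and follows essentially the same route as the paper: continuity of Lipschitz maps gives $\mathcal W(M)\subseteq\mathcal B(M)$, and a shifted distance function, vanishing at $\theta$, gives the reverse inclusion. The paper instead writes a closed set $F$ as the level set $g^{-1}(\{-d(\theta,F)\})$ with $g=d(\cdot,F)-d(\theta,F)$, whereas you write an open set $U$ as a superlevel set and also handle the degenerate case $U=M$ (empty complement), which the paper passes over silently.
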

\begin{proof}
    It is obvious that $\mathcal W(M)\subseteq\mathcal B(M)$ since Lipschitz functions are continuous.

    For the reverse inclusion, let $F\subseteq M$ be a closed subset and define $g(\cdot):=d(\cdot, F)-d(\theta, F)$. Note that $g(\theta)=0$, and the triangular inequality yields
    $$|g(x)-g(y)|=|d(x,F)-d(y,F)|\le d(x,y).$$
    That is, $g\in\mathrm{Lip}_0(M)$. Since $x\in F$ if and only if $d(x,F)=0$, equivalently $g(x)=-d(\theta, F)$, we obtain $F=g^{-1}(\{-d(\theta,F)\})$. As $g$ is $\mathcal{W}(M)$-measurable by definition and $\{-d(\theta,F)\}$ is a closed subset of $\mathbb{R}$, it follows that $F\in\mathcal{W}(M)$. Since $\mathcal{B}(M)$ is generated by the closed subsets of $M$, we conclude
    $\mathcal{B}(M)\subseteq\mathcal{W}(M)$.
\end{proof}

On the other hand, the inclusion $L_W^0(M)\subseteq L_S^0(M)$ may not hold. As in the standard case when $M$ is a vector space, there are several examples of weakly measurable functions that are not strongly measurable. The reader can refer to Example 5 on page 43 of \cite{diestelmeasures}. Nevertheless, these two spaces may be equal depending on $M$. The next result can be found in \cite[Proposition~1.9]{vakhaniya1987probability}:
\begin{proposition}\label{prop:separable}
    Let $(M,d,\theta)$ be a separable pointed metric space. Then for every Borel measurable
    function $f\colon\Omega\to M$ there exists a sequence $\{S_k\}_k\subseteq\mathrm{SF}(M)$
    such that $S_k(\omega)\to f(\omega)$ for every $\omega\in\Omega$.
\end{proposition}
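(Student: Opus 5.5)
The plan is the classical construction: use a countable dense set and, at each stage, send each point to the first nearby element of that set.

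Fix a countable dense set $\{y_1,y_2,\ldots\}\subseteq M$; it exists by separability, and if $M$ is finite we simply repeat points. For each $k\in\mathbb N$ and $\omega\in\Omega$ we choose the smallest index $j\le k$ minimising $d(f(\omega),y_j)$ over $j\in\{1,\ldots,k\}$, and set $S_k(\omega)=y_j$ for that index. In other words, $S_k(\omega)$ is the first nearest point to $f(\omega)$ among $y_1,\ldots,y_k$.

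First we check measurability. The functions $\omega\mapsto d(f(\omega),y_j)$ are measurable, because $x\mapsto d(x,y_j)-d(\theta,y_j)$ belongs to $\operatorname{Lip}_0(M)$. Here we use that Borel and weak measurability coincide (Proposition~\ref{prop:measureinclusions}(1)), and then add the constant $d(\theta,y_j)$. The sets
$$A^k_j=\{\omega: d(f(\omega),y_j)<d(f(\omega),y_i)\ \text{for } i<j,\ \ d(f(\omega),y_j)\le d(f(\omega),y_i)\ \text{for } j<i\le k\}$$
are finite intersections of sets defined by inequalities between measurable real functions, so they lie in $\Sigma$. Moreover $A^k_1,\ldots,A^k_k$ form a disjoint partition of $\Omega$. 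Hence $S_k=\sum_j y_j\chi_{A^k_j}$ in the sense of the definition, so $S_k\in\mathrm{SF}(M)$.

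Next we check pointwise convergence, everywhere rather than just a.e. Fix $\omega$ and $\varepsilon>0$. By density there is $j_0$ with $d(f(\omega),y_{j_0})<\varepsilon$. For all $k\ge j_0$ the minimising choice gives
$$d(S_k(\omega),f(\omega))=\min_{j\le k}d(f(\omega),y_j)\le d(f(\omega),y_{j_0})<\varepsilon.$$
Thus $S_k(\omega)\to f(\omega)$. The quantity $\min_{j\le k} d(f(\omega),y_j)$ is also monotone nonincreasing in $k$, which makes the convergence clean.

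The only delicate point is the measurability of the partition sets, specifically making the tie-breaking rule produce a genuine disjoint measurable partition. This is settled by the strict and non-strict inequalities above, together with measurability of $d(f(\cdot),y)$, which is exactly where Proposition~\ref{prop:measureinclusions}(1) enters. No completeness of $\mu$ is needed, since convergence holds at every point.
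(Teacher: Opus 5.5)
Your proof is correct and complete. The first-nearest-point tie-breaking gives a genuine disjoint measurable partition, and the bound $\min_{j\le k}d(f(\omega),y_j)\le d(f(\omega),y_{j_0})$ gives convergence at every point. The paper does not prove this statement itself; it only cites Proposition~1.9 of Vakhania et al., and your dense-sequence nearest-point construction is the standard argument for that fact.
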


\begin{remark}\label{rmk:separable}
    Combining Propositions~\ref{prop:measureinclusions} and \ref{prop:separable}, if $M$ is
    separable and $(\Omega,\Sigma,\mu)$ is complete, then the three measurabilities coincide:
    $$L^0_{S}(M)=L^0_{\mathcal B}(M)=L^0_{W}(M).$$
    Moreover, the separability hypothesis is automatic in a frequent setting: if $\Omega$ is a
    Polish space and $\Sigma$ its Borel $\sigma$-algebra, then every Borel measurable
    $f\colon\Omega\to M$ has separable range \cite[Proposition~1.11]{vakhaniya1987probability}.
    The conclusion above then applies to $f$ regarded as a map into $\overline{f(\Omega)}$,
    even when $M$ itself is not separable.
\end{remark}

\subsection{Definition and properties} \label{sec:integral}
\begin{definition}\label{def:sfintegral}
    Let $S$ be a simple function for $x_1,\ldots,x_n\in M$ and $A_1,\ldots, A_n\in\Sigma$ pairwise disjoint. We define its integral over $E\in\Sigma$ as follows:
    $$F_M-\int_E Sd\mu=\sum_{i=1}^n \mu(A_i\cap E)\delta_{x_i}\in\mathcal F(M).$$
    We will write $F-\int_ESd\mu$ if the metric space is understood, or simply $\int_ESd\mu$ when there is no confusion regarding the free integral.
\end{definition}


\begin{proposition}\label{prop:sfintegral-welldefined}
    The free integral of a simple function in Definition~\ref{def:sfintegral} does not depend
    on the representation of $S$.
\end{proposition}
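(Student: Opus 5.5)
The plan is the standard common-refinement argument for Lebesgue integrals of simple functions, carried out in $\mathcal F(M)$. Let $S$ have two representations: points $x_1,\ldots,x_n$ with a measurable partition $A_1,\ldots,A_n$ of $\Omega$, and points $y_1,\ldots,y_m$ with a measurable partition $B_1,\ldots,B_m$. Fix $E\in\Sigma$. We must show
$$\sum_{i=1}^n \mu(A_i\cap E)\,\delta_{x_i}=\sum_{j=1}^m \mu(B_j\cap E)\,\delta_{y_j}.$$

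\textbf{Step 1 (common refinement).} Consider the sets $C_{ij}:=A_i\cap B_j\in\Sigma$. They are pairwise disjoint and
$$A_i=\bigcup_{j=1}^m C_{ij},\qquad B_j=\bigcup_{i=1}^n C_{ij},$$
because each family partitions $\Omega$. Whenever $C_{ij}\neq\emptyset$, picking any $\omega\in C_{ij}$ gives $x_i=S(\omega)=y_j$. Hence $\delta_{x_i}=\delta_{y_j}$ for every pair $(i,j)$ with $C_{ij}\neq\emptyset$.

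\textbf{Step 2 (additivity).} Finite additivity of $\mu$ applied to $C_{ij}\cap E$ gives
$$\sum_i \mu(A_i\cap E)\,\delta_{x_i}=\sum_{i,j}\mu(C_{ij}\cap E)\,\delta_{x_i}.$$
Terms with $C_{ij}=\emptyset$ have zero coefficient and can be dropped. On the remaining terms we replace $\delta_{x_i}$ by $\delta_{y_j}$ using Step 1, then regroup over $i$ with additivity again. This yields $\sum_j \mu(B_j\cap E)\,\delta_{y_j}$, as required. All manipulations are finite linear combinations in the vector space $\mathcal F(M)$, so no convergence issues arise.

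\textbf{Main obstacle.} The only delicate point is the bookkeeping of the empty intersections. A representation may repeat values or contain empty $A_i$, and the points $x_i$, $y_j$ need not be distinct. The identification $\delta_{x_i}=\delta_{y_j}$ is only justified when $C_{ij}\neq\emptyset$. This is handled by the observation that those are exactly the terms carrying nonzero weight $\mu(C_{ij}\cap E)$. Note also that we do not need $\mu(C_{ij})>0$ for this identification; non-emptiness suffices. So null sets cause no difficulty, and no completeness or injectivity assumption on $\delta$ is needed beyond $\delta$ being a well-defined map.
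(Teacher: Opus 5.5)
Your proposal is correct and follows essentially the same route as the paper. The paper's proof likewise passes to the common refinement $C_{i,j}=A_i\cap B_j$ and uses finite additivity of $\mu$. It also observes that $\mu(C_{i,j}\cap E)\,\delta_{x_i}=\mu(C_{i,j}\cap E)\,\delta_{y_j}$ for every pair, since either $C_{i,j}=\emptyset$ or $x_i=y_j$, and then regroups the double sum.
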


\begin{proof}
    Let $S$ be a simple function who admits two representations
    $$\begin{cases}
        S(\omega)=x_i & \omega\in A_i, i=1,\ldots,n\\
        S(\omega)=y_i & \omega\in B_j, j=1,\ldots,m
    \end{cases}$$
    where $x_i,y_j\in M$ and $\{A_i\}_{i=1}^n$, $\{B_j\}_{j=1}^m$ are measurable partitions of
    $\Omega$. Define $C_{i,j}:=A_i\cap B_j$ and note that the additivity of $\mu$ yields
    $$\mu(A_i\cap E)=\sum_{j=1}^m\mu(C_{i,j}\cap E),\qquad
      \mu(B_j\cap E)=\sum_{i=1}^n\mu(C_{i,j}\cap E).$$
    Furthermore, for each pair $(i,j)$,
    $$\mu(C_{i,j}\cap E)\,\delta_{x_i}=\mu(C_{i,j}\cap E)\,\delta_{y_j}.$$
    Indeed, if $A_i\cap B_j=\emptyset$ both sides vanish. If $A_i\cap B_j\neq\emptyset$, then
    for any $\omega\in C_{i,j}$ we have $x_i=S(\omega)=y_j$. Hence $\delta_{x_i}=\delta_{y_j}$ and
    combining these identities we coclude
    $$\sum_{i=1}^n\mu(A_i\cap E)\,\delta_{x_i}
        =\sum_{i=1}^n\sum_{j=1}^m\mu(C_{i,j}\cap E)\,\delta_{x_i}
        =\sum_{j=1}^m\sum_{i=1}^n\mu(C_{i,j}\cap E)\,\delta_{y_j}
        =\sum_{j=1}^m\mu(B_j\cap E)\,\delta_{y_j}.$$
\end{proof}

The following result states a common technique when working with simple functions. We omit the proof due to its straightforwardness.
\begin{lemma}\label{lemma:compat}
    Let $S_1$ be a simple function for $x_1,\ldots,x_n\in M$ and $A_1,\ldots,A_n\in\Sigma$. Let $S_2$ be another simple function for $y_1,\ldots,y_m\in M$ and $B_1,\ldots,B_m\in\Sigma$. Then,
    $$\alpha\int_E S_1d\mu +\beta \int_E S_2d\mu=\sum_{i=1}^n\sum_{j=1}^m \mu(C_{i,j}\cap E)(\alpha\delta_{x_i}+\beta\delta_{y_j}),$$
    for all $E\in\Sigma$ where $C_{i,j}:=A_i\cap B_j$.
\end{lemma}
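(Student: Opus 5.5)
The identity is obtained by passing to the common refinement $\{C_{i,j}\}_{i,j}$ of the two partitions and then using linearity of the sum in $\mathcal F(M)$. Because $\{A_i\}_{i=1}^n$ and $\{B_j\}_{j=1}^m$ are both measurable partitions of $\Omega$, the sets $C_{i,j}=A_i\cap B_j$ are measurable, pairwise disjoint, and cover $\Omega$. For a fixed $E\in\Sigma$, finite additivity of $\mu$ therefore gives
$$\mu(A_i\cap E)=\sum_{j=1}^m\mu(C_{i,j}\cap E),\qquad \mu(B_j\cap E)=\sum_{i=1}^n\mu(C_{i,j}\cap E).$$
These are exactly the identities already used in the proof of Proposition~\ref{prop:sfintegral-welldefined}.

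We then expand each integral using Definition~\ref{def:sfintegral} and substitute the identities above:
$$\alpha\int_E S_1\,d\mu=\sum_{i=1}^n\sum_{j=1}^m \alpha\,\mu(C_{i,j}\cap E)\,\delta_{x_i},\qquad \beta\int_E S_2\,d\mu=\sum_{j=1}^m\sum_{i=1}^n \beta\,\mu(C_{i,j}\cap E)\,\delta_{y_j}.$$
Both are finite double sums in the Banach space $\mathcal F(M)$. We may therefore interchange the order of summation in the second one, add the two term by term, and factor out $\mu(C_{i,j}\cap E)$ in each summand. This yields $\sum_{i,j}\mu(C_{i,j}\cap E)(\alpha\delta_{x_i}+\beta\delta_{y_j})$, which is the claimed formula.

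No step presents a real obstacle; the argument is finite bookkeeping. The only point that needs care is that Definition~\ref{def:sfintegral} was stated for representations whose sets $A_i$ (and $B_j$) partition $\Omega$. This hypothesis is what ensures that the $C_{i,j}$ cover $\Omega$, so that the additivity identities hold; we will make it explicit. By Proposition~\ref{prop:sfintegral-welldefined}, we may assume without loss of generality that the given representations are partitions.
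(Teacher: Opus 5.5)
Your argument is correct and is exactly the common-refinement bookkeeping the paper has in mind (the paper omits the proof, but it is the same computation as in Proposition~\ref{prop:sfintegral-welldefined}). One small correction: the partition assumption should not be justified by Proposition~\ref{prop:sfintegral-welldefined}, because the right-hand side of the identity depends on the chosen representation and could fail for families that do not cover $\Omega$; the covering property is already part of the paper's definition of a simple function, so nothing needs to be assumed.
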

\begin{corollary}\label{cor:normintdist}
    For simple functions $S_1, S_2$ and measurable set $E\in\Sigma$ it holds
    $$\left\|\int_ES_1d\mu-\int_ES_2d\mu\right\|_\mathcal F\leq\int_Ed(S_1,S_2)d\mu.$$
\end{corollary}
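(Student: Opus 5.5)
We apply Lemma~\ref{lemma:compat} with $\alpha=1$ and $\beta=-1$, which rewrites both integrals over the common refinement $C_{i,j}=A_i\cap B_j$. This gives
$$\int_ES_1d\mu-\int_ES_2d\mu=\sum_{i=1}^n\sum_{j=1}^m\mu(C_{i,j}\cap E)\,(\delta_{x_i}-\delta_{y_j}).$$
The sets $C_{i,j}$ still form a measurable partition of $\Omega$. On each $C_{i,j}$ we have $S_1\equiv x_i$ and $S_2\equiv y_j$.

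Next we take $\mathcal F$-norms and use the triangle inequality, together with $\mu(C_{i,j}\cap E)\geq 0$ (property (M1)). Combined with the isometric property $\|\delta_x-\delta_y\|_{\mathcal F}=d(x,y)$, this yields
$$\left\|\int_ES_1d\mu-\int_ES_2d\mu\right\|_{\mathcal F}\le\sum_{i,j}\mu(C_{i,j}\cap E)\,d(x_i,y_j).$$

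Finally, we identify the right-hand side as a scalar integral. The function $\omega\mapsto d(S_1(\omega),S_2(\omega))$ is a real-valued simple function: on $C_{i,j}$ it takes the constant value $d(x_i,y_j)$. Its Lebesgue integral over $E$ is therefore exactly $\sum_{i,j}\mu(C_{i,j}\cap E)\,d(x_i,y_j)$, and the desired inequality follows.

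There is no real obstacle. The only points needing care are that Lemma~\ref{lemma:compat} legitimately moves both integrals onto the common refinement, which rests on well-definedness (Proposition~\ref{prop:sfintegral-welldefined}), and that the coefficients $\mu(C_{i,j}\cap E)$ are nonnegative, so that the triangle inequality can be applied term by term without sign issues.
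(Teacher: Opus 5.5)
Your proof is correct and is the route the paper intends: the corollary is stated immediately after Lemma~\ref{lemma:compat} without a written proof. It follows from that lemma with $\alpha=1$, $\beta=-1$, the triangle inequality with nonnegative coefficients $\mu(C_{i,j}\cap E)$, the isometry $\|\delta_x-\delta_y\|_{\mathcal F}=d(x,y)$, and identifying the resulting sum as the integral over $E$ of the simple function $d(S_1,S_2)$, exactly as you do.
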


We first introduce the definition of the integral for strongly measurable functions $f\colon\Omega\to M$. Such a function $f$ can be expressed as the pointwise limit of a sequence of simple functions $\{S_k\}_k$. This naturally leads us to the following question: If $\{S_k\}_k$ converges pointwise $\mu$-almost everywhere, does the sequence of its integrals converge in $\mathcal F(M)$? If this convergence holds, we can define the integral of $f$ as the limit of the integrals of $S_k$, thus linking pointwise convergence in $\Omega$ with uniform convergence in $\mathcal F(M)$. The following result covers this question.

\begin{proposition}\label{prop:convergencefree}
    Let $\{S_k\}_k$ be a sequence of simple functions pointwise convergent $\mu$-a.e., and $E\in\Sigma$. If $\sup_k d(\theta,S_k(\omega))$ is integrable over $E$, then the sequence $\{\int_ES_kd\mu\}_k$ is convergent in $\mathcal F(M)$.
\end{proposition}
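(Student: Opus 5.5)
The plan is to show that $\{\int_E S_k\,d\mu\}_k$ is Cauchy in $\mathcal F(M)$ and then use completeness of $\mathcal F(M)$. By Corollary~\ref{cor:normintdist}, for all $k,l$ we have
$$\left\|\int_E S_k\,d\mu-\int_E S_l\,d\mu\right\|_{\mathcal F}\le \int_E d(S_k,S_l)\,d\mu,$$
so it is enough to prove that $\int_E d(S_k,S_l)\,d\mu\to 0$ as $k,l\to\infty$. This is a scalar statement, and I will get it from the dominated convergence theorem.

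First I deal with measurability and pointwise behaviour. For fixed $k,l$ the function $\omega\mapsto d(S_k(\omega),S_l(\omega))$ is a real-valued simple function: it is constant on each set $A_i\cap B_j$ of the common refinement used in Lemma~\ref{lemma:compat}. Let $N$ be the null set off which $S_k(\omega)\to f(\omega)$. For $\omega\notin N$ the sequence $\{S_k(\omega)\}_k$ converges in $M$, hence is Cauchy, so $d(S_k(\omega),S_l(\omega))\to 0$ as $k,l\to\infty$. Convergence of the $S_k(\omega)$ is all that is needed here, so completeness of $M$ plays no role.

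Next comes the domination. Put $g(\omega):=\sup_k d(\theta,S_k(\omega))$, which is integrable over $E$ by hypothesis. The triangle inequality gives
$$d(S_k(\omega),S_l(\omega))\le d(S_k(\omega),\theta)+d(\theta,S_l(\omega))\le 2g(\omega)$$
for all $k,l$ and every $\omega$.

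The only step needing care is applying dominated convergence to a double-indexed family. I handle this by contradiction. If the double sequence of integrals did not tend to $0$, there would be $\varepsilon>0$ and pairs $(k_n,l_n)$ with $k_n,l_n\to\infty$ and $\int_E d(S_{k_n},S_{l_n})\,d\mu\ge\varepsilon$ for all $n$. The functions $h_n:=d(S_{k_n},S_{l_n})\chi_E$ tend to $0$ a.e. and are dominated by $2g\chi_E$. The ordinary dominated convergence theorem then forces $\int_E h_n\,d\mu\to 0$, which is a contradiction.

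Hence $\{\int_E S_k\,d\mu\}_k$ is Cauchy, and since $\mathcal F(M)$ is a Banach space it converges. I expect no real obstacle beyond this bookkeeping. One subtlety is that $g$ need not be measurable when $\Sigma$ is incomplete. I read the hypothesis "integrable" as including measurability, and in any case $g$ is a countable supremum of simple functions, so it is measurable.
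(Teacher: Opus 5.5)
Your proof is correct and follows essentially the paper's route: Corollary~\ref{cor:normintdist} reduces the Cauchy property to showing $\int_E d(S_n,S_m)\,d\mu\to 0$, dominated convergence with dominant $2\sup_k d(\theta,S_k)$ gives this, and completeness of $\mathcal F(M)$ finishes the argument. The only difference is cosmetic: the paper handles the double index by applying dominated convergence to the tail suprema $g_N=\sup_{n,m\ge N}d(S_n,S_m)$, whereas you argue by contradiction along a sequence of index pairs $(k_n,l_n)$.
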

\begin{proof}
    Fix a measurable set $E\in\Sigma$, and consider for each $N\in\mathbb N$ the function $g_N\colon\Omega\to\mathbb R$ defined by $g_N(\omega)=\sup_{n,m\geq N} d(S_n(\omega),S_m(\omega))$. 
    Note that $g_N$ is measurable and
    $$0\leq g_N(\omega)\leq 2 \sup_k d(S_k(\omega),\theta)=:G(\omega).$$
    That is, $g_N$ is integrable and dominated by the integrable function $G$. Furthermore, there exist a null set $E_0\in\Sigma$ such that $\{S_k(\omega)\}_k$ is a Cauchy sequence for all $\omega\in\Omega\setminus E_0$. Therefore, $g_N$ is pointwise convergent to 0 for each $\omega\in\Omega\setminus E_0$. By the Dominated Convergence Theorem it holds
    $$\lim\limits_N \int_Eg_Nd\mu=0.$$
    Consequently, for all $\varepsilon>0$ there exist $N_0\in\mathbb N$ such that
    \begin{equation}\label{eq:intdistb}
        \int_Ed(S_n,S_m)d\mu\leq \int_Eg_{N_0}d\mu<\varepsilon
    \end{equation}
    if $n,m\geq N_0$. On the other hand, by Corollary \ref{cor:normintdist} we get
    \begin{equation}\label{eq:normdifb}
        \left\|\int_E S_nd\mu-\int_E S_md\mu\right\|_\mathcal F\leq \int_Ed(S_n,S_m)d\mu.
    \end{equation}
    From \eqref{eq:intdistb} and \eqref{eq:normdifb} we conclude that the sequence $\{\int_ES_kd\mu\}_k$ is Cauchy in the Banach space $\mathcal F(M)$, and therefore convergent.
\end{proof}

\begin{proposition}\label{prop:welldefined}
    Let $f$ be a strongly measurable function, and let $\{S_k\}_k$
    and $\{S'_k\}_k$ be two sequences of simple functions pointwise
    convergent to $f$ $\mu$-a.e.\ such that $\sup_k d(\theta,S_k)$
    and $\sup_k d(\theta,S'_k)$ are both integrable over $E\in\Sigma$.
    Then,
    $$
        \lim_k\int_E S_k\,d\mu = \lim_k\int_E S'_k\,d\mu.
    $$
\end{proposition}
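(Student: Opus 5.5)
We will show that the difference of the integrals tends to zero by comparing the two sequences term by term. By Proposition~\ref{prop:convergencefree}, both limits
$$I:=\lim_k\int_E S_k\,d\mu \quad\text{and}\quad I':=\lim_k\int_E S'_k\,d\mu$$
exist in $\mathcal F(M)$. It therefore suffices to prove that
$$\Bigl\|\int_E S_k\,d\mu-\int_E S'_k\,d\mu\Bigr\|_{\mathcal F}\to 0 \quad\text{as } k\to\infty,$$
since the norm is continuous and this would give $\|I-I'\|_{\mathcal F}=0$.

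For each $k$, Corollary~\ref{cor:normintdist} yields
$$\Bigl\|\int_E S_k\,d\mu-\int_E S'_k\,d\mu\Bigr\|_{\mathcal F}\le \int_E d(S_k,S'_k)\,d\mu.$$
So the task reduces to showing $\int_E d(S_k,S'_k)\,d\mu\to 0$, which we will do with the Dominated Convergence Theorem. Each function $d(S_k,S'_k)$ is a real-valued simple function on the common refinement $\{A_i\cap B_j\}$, hence measurable. Pointwise convergence comes from the null sets: let $N,N'$ be the null sets off which $S_k\to f$ and $S'_k\to f$ respectively. For $\omega\notin N\cup N'$, the triangle inequality gives
$$d(S_k(\omega),S'_k(\omega))\le d(S_k(\omega),f(\omega))+d(f(\omega),S'_k(\omega))\to 0.$$
For domination, the triangle inequality through $\theta$ gives
$$d(S_k,S'_k)\le d(S_k,\theta)+d(\theta,S'_k)\le \sup_j d(\theta,S_j)+\sup_j d(\theta,S'_j),$$
and the right-hand side is integrable over $E$ by hypothesis.

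The only delicate point is measurability of the dominating function, and there is little to worry about. The suprema $\sup_j d(\theta,S_j)$ and $\sup_j d(\theta,S'_j)$ are countable suprema of measurable real functions, hence measurable, and their integrability over $E$ is assumed. With measurability, pointwise convergence a.e., and an integrable dominating function in hand, dominated convergence on $E$ gives $\int_E d(S_k,S'_k)\,d\mu\to 0$, which completes the argument.
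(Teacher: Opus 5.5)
Your proposal is correct and matches the paper's proof essentially step for step: both limits exist by Proposition~\ref{prop:convergencefree}, Corollary~\ref{cor:normintdist} bounds the difference by $\int_E d(S_k,S'_k)\,d\mu$, and dominated convergence with the dominating function $\sup_j d(\theta,S_j)+\sup_j d(\theta,S'_j)$ sends this to zero. Your remarks on the measurability of $d(S_k,S'_k)$ and of the countable suprema make explicit what the paper leaves implicit.
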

\begin{proof}
    By Proposition \ref{prop:convergencefree}, both limits exist in $\mathcal{F}(M)$. For each $k\in\mathbb N$, by Corollary \ref{cor:normintdist}
    we have
    \[
        \left\|\int_E S_k\,d\mu - \int_E S'_k\,d\mu\right\|_F
        \leq \int_E d(S_k, S'_k)\,d\mu.
    \]
    Since both $S_k$ and $S'_k$ converge pointwise to $f$
    $\mu$-a.e., we have $d(S_k,S'_k)\to 0$ pointwise $\mu$-a.e.
    Moreover,
    \[
        d(S_k(\omega),S'_k(\omega))
        \leq d(S_k(\omega),\theta) + d(\theta,S'_k(\omega))
        \leq \sup_k d(\theta,S_k(\omega))
        + \sup_k d(\theta,S'_k(\omega)),
    \]
    which is integrable. By the Dominated Convergence Theorem,
    $\lim_k\int_E d(S_k,S'_k)\,d\mu=0$, and the result follows.
\end{proof}

The previous results ensure that the following definition is consistent.
\begin{definition}
    Let $f$ be a strongly measurable function. We say that $f$ is free integrable over $E\in\Sigma$ if there exist a sequence of simple functions $\{S_k\}_k$  pointwise convergent to $f$ $\mu$-a.e. such that
    $\sup_kd(\theta,S_k)$ is integrable over $E$. In that case, the integral is defined as $$F_M-\int_Efd\mu = \lim\limits_k \int_ES_kd\mu.$$
    We write the space of free integrable functions as $L_\mathcal F^1(\mu,M)$.
\end{definition}

The following result characterises free integrability of a strongly measurable function in terms of the function itself, without requiring the explicit construction of an approximating sequence of simple functions.
\begin{theorem}\label{thm:freeintchar}
    Let $f$ be a strongly measurable function. Then, $f$ is free integrable if and only if $d(\theta,f)$ is integrable.
    \begin{proof}
        Since $f$ is strongly measurable, there exist a sequence of simple functions $\{S_k\}_k$ pointwise convergent to $f$ $\mu$-a.e.. Then, $d(\theta,f)$ is measurable since it is the pointwise limit of the real simple functions $d(\theta, S_k)$.
        
        Suppose first that $f$ is free integrable and suppose without loss of generality that $\sup_kd(\theta, S_k)$ is integrable. Then, observe that
        $$0\leq d(\theta,f)=\lim_kd(\theta,S_k)\leq \sup_kd(\theta, S_k).$$
        From this we conclude that $d(\theta,f)$ is integrable since $\sup_k d(\theta, S_k)$ is integrable.

        On the other hand, suppose that $d(\theta,f)$ is integrable. Let $E_0\in\Sigma$ be the null set such that $S_k(\omega)$ converges to $f(\omega)$ for all $\omega\in\Omega\setminus E_0$. For each such $\omega$ and $\varepsilon>0$ there exist $k(\omega,\varepsilon)\in\mathbb N$ so that $d\bigl(f(\omega),S_k(\omega)\bigr)<\varepsilon$ for all $k\geq k(\omega,\varepsilon)$.
        Define now the following sequence of simple functions:
        $$\tilde{S}_k(\omega):=\begin{cases}
            S_k(\omega) & \text{if} \ \ d\bigl(\theta, S_k(\omega)\bigr)\leq d\bigl(\theta,f(\omega)\bigr)+1,\\
            \theta & \text{if} \ \ d\bigl(\theta, S_k(\omega)\bigr)> d\bigl(\theta,f(\omega)\bigr)+1.
        \end{cases}$$
        For $k\geq\max\{k(\omega,\varepsilon),k(\omega,1)\}$ note that $d\bigl(\tilde{S}_k(\omega),f(\omega)\bigr)=d\bigl(S_k(\omega),f(\omega)\bigr)<\varepsilon,$
        and therefore $\{\tilde{S}_k\}_k$ is pointwise convergent to $f$ $\mu$-a.e..
        To finish the proof, let us to show that $\sup_kd\bigl(\tilde{S}_k,\theta\bigr)$ is integrable. Observe that
        $$d\bigl(\tilde{S}_k(\omega),\theta\bigr)=\begin{cases}
            d\bigl(S_k(\omega),\theta\bigr) & \text{if} \ \ d\bigl(\theta, S_k(\omega)\bigr)\leq d\bigl(\theta,f(\omega)\bigr)+1,\\
            0 & \text{if} \ \ d\bigl(\theta, S_k(\omega)\bigr)> d\bigl(\theta,f(\omega)\bigr)+1,
        \end{cases}$$
        and therefore $d\bigl(\tilde{S}_k(\omega),\theta\bigr)\leq d\bigl(f(\omega),\theta\bigr)+1$. From this and the integrability of $d\bigl(f(\omega),\theta\bigr)$ we conclude that $\sup_k d\bigl(\tilde{S}_k(\omega),\theta\bigr)$ is integrable.
    \end{proof}
\end{theorem}

The free integral of a metric-valued function $f$ can be linked with the Bochner integral of a specific function related with $f$ in $\mathcal F(M)$. To see that, recall that a strongly measurable vector-valued function is Bochner integrable if and only if its norm is integrable \cite[Chapter 2, Theorem 2]{diestelmeasures}
\begin{proposition}\label{prop:freebochner}
    For any strongly measurable function (in the metric-valued sense) $f\colon\Omega\to M$ it holds that $\delta_f : \Omega \to \mathcal F(M)$ is strongly measurable (in the vector-valued sense).
    In that case, $f$ is free integrable if and only if $\delta_f : \Omega \to \mathcal F(M)$ is Bochner integrable.
    Furthermore, for any $E \in \Sigma$,
    $$F-\int_Efd\mu=B-\int_E\delta_fd\mu.$$
\end{proposition}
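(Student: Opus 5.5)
The plan has three parts, each resting on the fact that $\delta\colon M\to\mathcal F(M)$ is an isometry.

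\emph{Strong measurability of $\delta_f$.} Take simple functions $\{S_k\}_k\subseteq\mathrm{SF}(M)$ with $S_k\to f$ $\mu$-a.e. If $S_k$ takes the value $x_i$ on $A_i$, then $\delta_{S_k}=\sum_i \delta_{x_i}\chi_{A_i}$ is a vector-valued simple function. Since $\|\delta_{S_k(\omega)}-\delta_{f(\omega)}\|_{\mathcal F}=d(S_k(\omega),f(\omega))\to 0$ a.e., $\delta_f$ is strongly measurable in the Bochner sense.

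\emph{The equivalence.} By Theorem \ref{thm:freeintchar}, $f$ is free integrable if and only if $d(\theta,f)$ is integrable. By the Bochner criterion (\cite[Chapter 2, Theorem 2]{diestelmeasures}), $\delta_f$ is Bochner integrable if and only if $\|\delta_f\|_{\mathcal F}$ is integrable. These are the same condition, because $\delta_\theta=0$ gives
$$\|\delta_{f(\omega)}\|_{\mathcal F}=\|\delta_{f(\omega)}-\delta_\theta\|_{\mathcal F}=d(f(\omega),\theta).$$

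\emph{Equality of the integrals.} Use the dominated sequence $\tilde S_k$ built in the proof of Theorem \ref{thm:freeintchar}. It satisfies $\tilde S_k\to f$ a.e. and $d(\theta,\tilde S_k)\le d(\theta,f)+1$. For simple functions, Definition \ref{def:sfintegral} gives
$$F-\int_E\tilde S_k\,d\mu=\sum_i\mu(A_i\cap E)\,\delta_{x_i}=B-\int_E\delta_{\tilde S_k}\,d\mu.$$
Letting $k\to\infty$, the left side tends to $F-\int_E f\,d\mu$ by definition of the free integral.

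For the right side, the pointwise bound
$$\|\delta_{\tilde S_k}-\delta_f\|_{\mathcal F}=d(\tilde S_k,f)\le d(\tilde S_k,\theta)+d(\theta,f)\le 2d(\theta,f)+1$$
holds, and the majorant is integrable because $\mu$ is finite. Dominated convergence then gives $\int_E\|\delta_{\tilde S_k}-\delta_f\|_{\mathcal F}\,d\mu\to0$. Hence $\{\delta_{\tilde S_k}\}_k$ is an admissible approximating sequence in the definition of the Bochner integral, and $B-\int_E\delta_{\tilde S_k}\,d\mu\to B-\int_E\delta_f\,d\mu$.

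The limits of equal sequences coincide, so the two integrals agree. The only delicate point is that the Bochner definition requires $L^1$-approximation rather than just pointwise approximation. The domination provided by the truncated sequence $\tilde S_k$, together with the finiteness of $\mu$, is what supplies this.
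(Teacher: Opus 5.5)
Your proposal is correct, and for the strong measurability of $\delta_f$ and the integrability equivalence it follows the paper's proof exactly: the isometry $\|\delta_{S_k}-\delta_f\|_{\mathcal F}=d(S_k,f)$, Theorem~\ref{thm:freeintchar}, the Bochner criterion, and $\|\delta_f\|_{\mathcal F}=d(\theta,f)$. The paper's proof stops there and never verifies the identity $F-\int_E f\,d\mu=B-\int_E\delta_f\,d\mu$, so your dominated-convergence argument with the truncated sequence $\tilde S_k$ supplies a step the paper leaves unproved. The identity could also be read off from Theorem~\ref{thm:int_dual} with $\varphi=\delta$, whose linearisation is the identity on $\mathcal F(M)$.
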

\begin{proof}
    As $f$ is strongly measurable, consider $\{S_k\}_k\subseteq\mathrm{SF}(M)$ pointwise convergent to $f$ $\mu$-a.e.. Note that $\delta_{S_k}$ is a simple function in $\mathcal F(M)$ and $$\lim_k\|\delta_{S_k}-\delta_f\|_\mathcal{F}=\lim_kd\bigl(S_k,f\bigr)=0, \quad \mu-\text{a.e.}.$$
    Therefore, $\delta_f$ is strongly measurable (in the vector-valued sense).
    
    On the other hand, as $f$ is free integrable, by Theorem \ref{thm:freeintchar} we know that $d(\theta,f)$ is integrable. Since
    $\|\delta_f\|_\mathcal{F}=d(f,\theta)$, we conclude that $\delta_f$ is Bochner integrable.
    For the reciprocal, is enought to consider the same formula, but apply the charactrizaion in the inverse order.
\end{proof}
    

By identifying the free integral with the Bochner integral, we can transfer and generalize classical results from integration theory to the setting of metric-valued functions. For instance, it is worth mentioning the theorems of convergence (monotone and dominated). However, as we are focusing on the metric space itself, the subsequent analysis will use the free space as a framework for studying the integral in a metric context.
\begin{theorem}\label{thm:int_dual}
    Let $(X,\|\cdot\|_X)$ be a Banach space. For any $\varphi\in\mathrm{Lip}_0(M,X)$ consider the bounded linear map $T_\varphi\colon\mathcal F(M)\to X$ given by Theorem \ref{thm:universal} such that $\varphi=T_\varphi\circ\delta$. Then, for all free integrable function $f$ and $E\in\Sigma$ it holds
    $$T_\varphi\left( F-\int_E fd\mu\right)=B -\int_E\varphi\circ fd\mu.$$
\end{theorem}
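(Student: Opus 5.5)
The plan is to reduce the statement to the classical fact that bounded linear operators commute with the Bochner integral, using Proposition~\ref{prop:freebochner}. Fix $\varphi\in\mathrm{Lip}_0(M,X)$, a free integrable $f$, and $E\in\Sigma$. By Proposition~\ref{prop:freebochner}, $\delta_f\colon\Omega\to\mathcal F(M)$ is Bochner integrable and
$$F-\int_E f\,d\mu=B-\int_E\delta_f\,d\mu.$$
Since $T_\varphi\circ\delta_f=\varphi\circ f$ pointwise, the theorem follows once we show that $\varphi\circ f$ is Bochner integrable and that $T_\varphi$ can be passed inside the Bochner integral.

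I will argue directly rather than quoting Hille's theorem, to keep the proof self-contained. By Theorem~\ref{thm:freeintchar} and its proof, choose simple functions $S_k\to f$ $\mu$-a.e.\ with $\sup_k d(\theta,S_k)$ integrable over $E$. On simple functions the identity is immediate from linearity:
$$T_\varphi\Big(\sum_i\mu(A_i\cap E)\delta_{x_i}\Big)=\sum_i\mu(A_i\cap E)\varphi(x_i)=B-\int_E\varphi\circ S_k\,d\mu,$$
since $\varphi\circ S_k$ is an $X$-valued simple function.

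Next I pass to the limit on both sides. On the left, $\int_E S_k\,d\mu\to F-\int_E f\,d\mu$ by definition, and $T_\varphi$ is continuous, so the left-hand sides converge to $T_\varphi\bigl(F-\int_E f\,d\mu\bigr)$. On the right, $\varphi\circ S_k\to\varphi\circ f$ $\mu$-a.e.\ by continuity of $\varphi$, so $\varphi\circ f$ is strongly measurable. Moreover, since $\varphi(\theta)=0$,
$$\|\varphi\circ f-\varphi\circ S_k\|_X\le\|\varphi\|_{Lip}\,d(f,S_k)\le\|\varphi\|_{Lip}\Bigl(d(\theta,f)+\sup_j d(\theta,S_j)\Bigr).$$
The right-hand bound is integrable, so dominated convergence gives $\int_E\|\varphi\circ f-\varphi\circ S_k\|_X\,d\mu\to0$. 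Hence $\varphi\circ f$ is Bochner integrable on $E$, with $B-\int_E\varphi\circ f\,d\mu=\lim_k B-\int_E\varphi\circ S_k\,d\mu$. Equating the two limits proves the theorem.

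The main obstacle is a bookkeeping one: the paper's Bochner definition is stated on $\Omega$, while here we integrate over $E$. I will handle this by working with $\chi_E$ times the functions, or equivalently with the restricted measure. A subtler point is that integrability of $d(\theta,f)$ holds over $E$, and this is exactly what the domination step above needs.
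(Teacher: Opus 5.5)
Your proof is correct and follows essentially the paper's argument: you verify the identity on simple functions by linearity of $T_\varphi$, then pass to the limit using continuity of $T_\varphi$ on the left and dominated convergence on the right. The only difference is cosmetic. The paper dominates $\|\varphi\circ S_k\|_X$ by $\|\varphi\|_{Lip}\sup_k d(S_k,\theta)$ and invokes dominated convergence for Bochner integrals. You instead dominate $\|\varphi\circ f-\varphi\circ S_k\|_X$ to get $L^1$-convergence directly, which matches the paper's definition of Bochner integrability exactly.
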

\begin{proof}
    First, note that a direct calculation gives the result for any simple function $S$ and measurable set $E\in\Sigma$:
    \begin{align*}
        B - \int_E\varphi\circ Sd\mu&=\sum_{j=1}^n\mu(E_i\cap E)\varphi(x_i)\\&=\sum_{j=1}^n\mu(E_i\cap E)T_\varphi(\delta_{x_i})\\&=T_\varphi\left(\sum_{j=1}^n\mu(E_i\cap E)\delta_{x_i}\right)=T_\varphi\left( F-\int_E Sd\mu\right).
    \end{align*}

    Consider now a free integrable function $f$ and a sequence of simple functions $S_k$ pointwise convergent to $f$ $\mu$-a.e.. Then, $\varphi\circ S_k$ is pointwise convergent to $\varphi\circ f$ $\mu$-a.e.. Furthermore, recall that $\sup_kd(S_k,\theta)$ is Bochner integrable and $$\|\varphi\circ S_k(\omega)\|_X\leq \|\varphi\|_{Lip}d(S_k(\omega),\theta)\leq \|\varphi\|_{Lip}\sup_kd(S_k(\omega),\theta).$$ That is, $\varphi\circ S_k$ is dominated by an integrable function. Therefore, $\varphi\circ f$ is Bochner integrable and hence
    \begin{align*}
        B - \int_E\varphi\circ fd\mu&=\lim_k B -\int_E\varphi\circ S_kd\mu\\
        &=\lim_k T_\varphi\left(F-\int_ES_kd\mu\right)\\
        &=T_\varphi\left(\lim_k F-\int_ES_kd\mu\right)=T_\varphi\left(F-\int_Efd\mu\right).
    \end{align*}
\end{proof}

\begin{corollary}
    Let $f\colon\Omega\to M$ be a free integrable function, and let $(X,\|\cdot\|_X)$ be a Banach space. For all $\varphi\in\mathrm{Lip}_0(M,X)$ and $E\in\Sigma$ it holds
    $$\left\|B \ \text{-} \int_E \varphi\circ fd\mu\right\|_X\le \|\varphi\|_{Lip}\left\|F \ \text{-} \int_E  fd\mu\right\|_\mathcal{F}.$$
\end{corollary}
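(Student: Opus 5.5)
This inequality follows from Theorem~\ref{thm:int_dual} together with the norm bound in Theorem~\ref{thm:universal}; no new approximation argument is needed.

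Fix $\varphi\in\mathrm{Lip}_0(M,X)$, a free integrable $f$ and $E\in\Sigma$. Let $T_\varphi\colon\mathcal F(M)\to X$ be the bounded linear operator from Theorem~\ref{thm:universal} with $\varphi=T_\varphi\circ\delta$ and $\|T_\varphi\|=\|\varphi\|_{Lip}$. Theorem~\ref{thm:int_dual} gives two things. First, $\varphi\circ f$ is Bochner integrable; this is part of its proof, via domination by $\|\varphi\|_{Lip}\sup_k d(S_k,\theta)$, so the left-hand side is well defined. Second,
$$B\text{-}\int_E\varphi\circ f\,d\mu=T_\varphi\Bigl(F\text{-}\int_E f\,d\mu\Bigr).$$

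Taking norms in $X$ and using the operator norm,
$$\Bigl\|B\text{-}\int_E\varphi\circ f\,d\mu\Bigr\|_X\le\|T_\varphi\|\,\Bigl\|F\text{-}\int_E f\,d\mu\Bigr\|_{\mathcal F}=\|\varphi\|_{Lip}\Bigl\|F\text{-}\int_E f\,d\mu\Bigr\|_{\mathcal F}.$$

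There is no real obstacle. The only point to state explicitly is that Theorem~\ref{thm:universal} gives the exact equality $\|T_\varphi\|=\|\varphi\|_{Lip}$, which is what makes the constant in the inequality precisely $\|\varphi\|_{Lip}$.
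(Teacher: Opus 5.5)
Your proposal is correct and is the argument the paper intends. The paper states this corollary immediately after Theorem~\ref{thm:int_dual} without a separate proof, and it follows, as you show, by applying $T_\varphi$ and the operator-norm bound $\|T_\varphi\|=\|\varphi\|_{Lip}$ from Theorem~\ref{thm:universal}. Only the inequality $\|T_\varphi\|\le\|\varphi\|_{Lip}$ is actually needed.
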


\begin{corollary}\label{cor:duality_free_integral}
    Let $f:\Omega\to M$ be a free integrable function. Then, for every
    $\varphi\in\mathrm{Lip}_0(M)$ and every $E\in\Sigma$,
    \begin{equation}\label{eq:free_integral_duality}
        \left\langle F-\int_E f\,d\mu,\,\varphi\right\rangle
        = \int_E \varphi\circ f\,d\mu.
    \end{equation}
\end{corollary}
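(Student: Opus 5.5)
This is the scalar case $X=\mathbb R$ of Theorem~\ref{thm:int_dual}, so the plan is to specialise that theorem and identify the operator it produces with the dual pairing.

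First, fix $\varphi\in\mathrm{Lip}_0(M)=\mathrm{Lip}_0(M,\mathbb R)$. Theorem~\ref{thm:universal} gives a bounded linear functional $T_\varphi\colon\mathcal F(M)\to\mathbb R$ with $T_\varphi\circ\delta=\varphi$.

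Next, I will show that $T_\varphi$ is exactly the pairing $m\mapsto\langle m,\varphi\rangle$ under the identification $\mathcal F(M)^*\cong\mathrm{Lip}_0(M)$. By definition of $\delta_x$ as an evaluation functional, the pairing is linear and continuous on $\mathcal F(M)$ and satisfies $\langle\delta_x,\varphi\rangle=\varphi(x)=T_\varphi(\delta_x)$ for every $x\in M$. So the two functionals agree on $\operatorname{span}\{\delta_x:x\in M\}$. This span is dense in $\mathcal F(M)$ by definition, so they coincide on all of $\mathcal F(M)$. Alternatively, this follows from the uniqueness in Theorem~\ref{thm:universal}.

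Finally, Theorem~\ref{thm:int_dual} with $X=\mathbb R$ gives
$$\left\langle F-\int_E f\,d\mu,\varphi\right\rangle=T_\varphi\left(F-\int_E f\,d\mu\right)=B-\int_E\varphi\circ f\,d\mu.$$
For real-valued functions the Bochner integral coincides with the Lebesgue integral, since both are limits of integrals of simple functions under dominated convergence. The right-hand side is therefore $\int_E\varphi\circ f\,d\mu$, and this integral is finite because $|\varphi\circ f|\le\|\varphi\|_{Lip}\,d(f,\theta)$, which is integrable by Theorem~\ref{thm:freeintchar}.

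There is no real obstacle here. The only point needing care is the identification of $T_\varphi$ with the pairing, which is settled by the density argument above.
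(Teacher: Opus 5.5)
Your proposal is correct and follows essentially the same route as the paper: you specialise Theorem~\ref{thm:int_dual} to $X=\mathbb{R}$, identify $T_\varphi$ with the pairing $m\mapsto\langle m,\varphi\rangle$, and use that the Bochner and Lebesgue integrals coincide for real-valued functions. Your density (or uniqueness) argument for $T_\varphi(m)=\langle m,\varphi\rangle$ simply makes explicit a step the paper states without justification.
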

\begin{proof}
    For $\varphi\in\mathrm{Lip}_0(M)=\mathrm{Lip}_0(M,\mathbb{R})$, the bounded linear operator
    $T_\varphi:\mathcal{F}(M)\to\mathbb{R}$ given by Theorem \ref{thm:universal} satisfies
    $T_\varphi(m)=\langle m,\varphi\rangle$ for all
    $m\in\mathcal{F}(M)$. Therefore, Theorem \ref{thm:int_dual} yields
    \[
        \left\langle F-\int_E f\,d\mu,\,\varphi\right\rangle
        = T_\varphi\left(F-\int_E f\,d\mu\right)
        = B-\int_E \varphi\circ f\,d\mu
        = \int_E \varphi\circ f\,d\mu,
    \]
    where the last equality holds since $\varphi\circ f$ is a
    real-valued integrable function and the Bochner integral coincides
    with the Lebesgue integral in $\mathbb{R}$.
\end{proof}

The previous results describe the action of the free integral through Lipschitz maps into Banach spaces. We now extend this framework to
Lipschitz maps between pointed metric spaces, showing that the free integral is compatible with the linearisation of such maps through
the associated free spaces.

\begin{proposition}\label{prop:universaltwofree}
    Let $(M,d_M, \theta_M)$ and $(N,d_N,\theta_N)$ be pointed metric spaces. Consider a Lipschitz map $g\colon M\to N$ such that $g(\theta_M)=\theta_N$. Then, there exist a bounded linear map $T_g\colon \mathcal F(M)\to\mathcal F(N)$ such that for every free integrable function $f\in L_\mathcal F^1(\mu,M)$ over $E\in\Sigma$ it holds
    $$T_g\left(F_M-\int_E fd\mu\right)=F_N-\int_E g\circ fd\mu.$$
\end{proposition}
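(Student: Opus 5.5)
The plan is to take $T_g$ to be the operator given by Theorem \ref{thm:universaltwo}, i.e.\ the unique bounded linear map with $T_g\circ\delta_M=\delta_N\circ g$. Equivalently, it is the operator associated by Theorem \ref{thm:universal} to the Lipschitz map $\delta_N\circ g\in\mathrm{Lip}_0(M,\mathcal F(N))$. This map vanishes at $\theta_M$ because $g(\theta_M)=\theta_N$ and $\delta_{\theta_N}=0$. With this choice, the cleanest route is to apply Theorem \ref{thm:int_dual} with $X=\mathcal F(N)$ and $\varphi=\delta_N\circ g$. That theorem gives directly
$$T_g\left(F_M-\int_E f\,d\mu\right)=B-\int_E \delta_N\circ(g\circ f)\,d\mu .$$

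The remaining task is to identify the right-hand side with the free integral of $g\circ f$ in $N$, and for this I will use Proposition \ref{prop:freebochner} applied in $N$. Two things must be checked first.

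\emph{Strong measurability.} $g\circ f$ is strongly measurable. If $S_k\to f$ $\mu$-a.e.\ with $S_k$ simple, then each $g\circ S_k$ is a simple $N$-valued function. The same measurable partition works, and merging repeated values is harmless by Proposition \ref{prop:sfintegral-welldefined}. By continuity of $g$, $g\circ S_k\to g\circ f$ $\mu$-a.e.

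\emph{Integrability.} $g\circ f$ is free integrable. By Theorem \ref{thm:freeintchar} it suffices that $d_N(\theta_N,g\circ f)$ is integrable. This follows from
$$d_N(\theta_N,g(f(\omega)))=d_N(g(\theta_M),g(f(\omega)))\le\|g\|_{Lip}\,d_M(\theta_M,f(\omega))$$
together with the integrability of $d_M(\theta_M,f)$, which again holds by Theorem \ref{thm:freeintchar}.

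With both checks done, Proposition \ref{prop:freebochner} yields $F_N-\int_E g\circ f\,d\mu=B-\int_E\delta_N\circ(g\circ f)\,d\mu$, and this concludes the argument.

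As a safeguard, I would also give a direct argument that avoids the Bochner identification. For a simple $S=\sum x_i\chi_{A_i}$, linearity of $T_g$ gives
$$T_g\Bigl(\sum\mu(A_i\cap E)\delta_{x_i}\Bigr)=\sum\mu(A_i\cap E)\delta_{g(x_i)}=F_N-\int_E g\circ S\,d\mu .$$
For general $f$, take $S_k$ with $\sup_k d_M(\theta_M,S_k)$ integrable. Then $\sup_k d_N(\theta_N,g\circ S_k)\le\|g\|_{Lip}\sup_k d_M(\theta_M,S_k)$ is integrable, so $g\circ S_k$ is an admissible approximating sequence for $g\circ f$. Continuity of $T_g$ then lets me pass to the limit.

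The only step requiring genuine care is this measurability and integrability bookkeeping for $g\circ f$, namely confirming that $g\circ S_k$ are legitimate simple functions with an integrable dominating bound. The rest is formal.
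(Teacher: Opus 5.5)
Your proof is correct and follows essentially the paper's route: choose $T_g$ from Theorem \ref{thm:universaltwo} and identify both sides with the Bochner integral of $\delta_N\circ g\circ f$; the only difference is that you get the $M$-side identity from Theorem \ref{thm:int_dual}, whereas the paper pulls $T_g$ out of the Bochner integral and applies Proposition \ref{prop:freebochner} on both sides. Your explicit check that $g\circ f$ is strongly measurable and free integrable, which the paper uses without comment, is a worthwhile addition, and your simple-function limit argument is a valid, more elementary alternative.
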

\begin{proof}
    The existence of such bounded linear map $T_g$ is given by Theorem \ref{thm:universaltwo}. For such $T_g$ it holds $T_g\circ\delta_M=\delta_N\circ g$. Then, by Proposition \ref{prop:freebochner} we get
    \begin{align*}
        F_N-\int_Eg\circ fd\mu&=B-\int_E\delta_N\circ g\circ fd\mu\\
        &=B-\int_ET_g\circ\delta_M\circ fd\mu\\
        &=T_g\left(B-\int_E\delta_M\circ fd\mu\right)=T_g\left(F_M-\int_Efd\mu\right).
    \end{align*}
\end{proof}

\begin{proposition}
    Let $(M,d,\theta)$ be a pointed metric space. Consider $M_0\subset M$ such that $\theta\in M_0$, and let $i\colon M_0\to M$ be the inclusion map. Then, 
    $$F_{M_0}-\int_Efd\mu=F_M-\int_Ei\circ fd\mu, \quad \text{for all} \ E\in\Sigma.$$
\end{proposition}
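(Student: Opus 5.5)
The idea is to apply Proposition~\ref{prop:universaltwofree} to the inclusion $g=i\colon M_0\to M$. This map is $1$-Lipschitz (indeed an isometry) and satisfies $i(\theta)=\theta$, so the hypotheses hold. That proposition gives a bounded linear operator $T_i\colon\mathcal F(M_0)\to\mathcal F(M)$ such that
$$T_i\Bigl(F_{M_0}-\int_E f\,d\mu\Bigr)=F_M-\int_E i\circ f\,d\mu$$
for every $f\in L^1_{\mathcal F}(\mu,M_0)$. It then remains to interpret $T_i$ as the identification of $\mathcal F(M_0)$ with a subspace of $\mathcal F(M)$, namely $T_i\delta^{M_0}_x=\delta^M_x$ for $x\in M_0$. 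The statement should be read as an equality modulo this identification.

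Before that, I would check that both sides make sense. If $f\colon\Omega\to M_0$ is strongly measurable with approximating simple functions $S_k$, then $i\circ S_k$ are simple functions in $M$ converging to $i\circ f$. Moreover $d(\theta,i\circ f)=d(\theta,f)$, so by Theorem~\ref{thm:freeintchar} the function $f$ is free integrable in $M_0$ if and only if $i\circ f$ is free integrable in $M$. If one wants to avoid the previous proposition, the same approximating sequence can be used directly. On simple functions,
$$T_i\Bigl(\sum_j\mu(A_j\cap E)\delta_{x_j}\Bigr)=\sum_j\mu(A_j\cap E)\delta_{x_j},$$
and the general case follows by continuity of $T_i$ and passage to the limit.

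The main obstacle is justifying that $T_i$ is an isometric embedding, so that identifying $F_{M_0}-\int_E f\,d\mu$ with its image is legitimate. The plan is the standard McShane extension argument. Every $\psi\in\mathrm{Lip}_0(M_0)$ extends to some $\tilde\psi\in\mathrm{Lip}_0(M)$ with the same Lipschitz constant, for instance
$$\tilde\psi(x)=\inf_{y\in M_0}\bigl(\psi(y)+\|\psi\|_{Lip}\,d(x,y)\bigr),$$
and conversely restrictions do not increase the constant. For a finitely supported $m=\sum a_j\delta_{x_j}$ with $x_j\in M_0$, taking the supremum over the unit ball of each Lipschitz dual then gives $\|T_i m\|_{\mathcal F(M)}=\|m\|_{\mathcal F(M_0)}$. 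By density, $T_i$ is an isometry onto the closed subspace $\overline{\operatorname{span}}\{\delta_x:x\in M_0\}\subseteq\mathcal F(M)$. Under this identification the displayed equality is exactly the claimed formula.
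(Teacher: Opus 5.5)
Your proposal is correct and follows the same route as the paper: apply Proposition~\ref{prop:universaltwofree} to the inclusion $i$, then recognise $T_i$, which satisfies $T_i\delta_x=\delta_x$ for $x\in M_0$, as the canonical inclusion $\mathcal F(M_0)\hookrightarrow\mathcal F(M)$. Your McShane-extension argument, showing that $T_i$ is an isometry onto $\overline{\operatorname{span}}\{\delta_x : x\in M_0\}$, supplies the justification the paper leaves implicit when it simply calls $T_i$ ``the inclusion map''.
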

\begin{proof}
    Note that $i$ is a Lipschitz map with $\operatorname{Lip}(i) \le 1$ and satisfies $i(\theta)=\theta$. By Proposition \ref{prop:universaltwofree}, there exists a bounded linear map $T_i\colon \mathcal F(M_0)\to\mathcal F(M)$ such that
    $$T_i\left(F_{M_0}-\int_E fd\mu\right)=F_M-\int_E i\circ fd\mu.$$
    This map $T_i$ satisfies $T_i\circ\delta=\delta\circ i$. In particular, for any $x\in M_0$ observe that $T_i(\delta_x)=\delta_{i(x)}$. By linearity and continuity of $T_i$ and density of molecules, we obtain that $T_i$ is the inclusion map between $\mathcal F(M_0)$ and $\mathcal F(M)$. Hence,
    $$F_{M_0}-\int_E fd\mu=F_M-\int_E i\circ fd\mu.$$
\end{proof}

Observe that the construction of the Lipschitz-free space relies on the selection of a distinguished fixed point. However, the concept of free integrability remains independent of this choice. Indeed, if $\theta,\theta'\in M$ are two distinguished points, the triangular inequality yields
$d(f,\theta')\le d(f,\theta)+d(\theta,\theta')$. Hence, $L^1_\mathcal F(\mu, (M,d,\theta))=L^1_\mathcal F(\mu,(M,d,\theta'))$. In the following we present the relation between both free integrals.

\begin{proposition}
     Let $(M,d)$ be a metric space, and let $\theta,\theta'\in M$. There exists a bounded linear operator $T\colon \mathcal F_\theta(M)\to\mathcal F_{\theta'}(M)$ with $\|T\|=1$ such that for any $f\in L_{\mathcal F_\theta}^1(\mu, M)$ free integrable function over $E\in \Sigma$ it holds
     $$F_{\theta'}-\int_E fd\mu=T\left(F_\theta-\int_Efd\mu\right)+\mu(E)\delta_\theta^{\theta'},$$
     where $\delta_\theta^{\theta'}$ is the evaluation functional of $\theta$ is the free space of $(M,d,\theta')$.
\end{proposition}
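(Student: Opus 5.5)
The plan is to construct $T$ explicitly as the linear operator that shifts evaluation functionals. In $\mathcal F_\theta(M)$ we have $\delta^\theta_\theta=0$, while in $\mathcal F_{\theta'}(M)$ we have $\delta^{\theta'}_{\theta'}=0$. The natural candidate is therefore
$$T(\delta^\theta_x)=\delta^{\theta'}_x-\delta^{\theta'}_\theta .$$

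\emph{Step 1: construct $T$.} Define $\psi\colon (M,d,\theta)\to\mathcal F_{\theta'}(M)$ by $\psi(x)=\delta^{\theta'}_x-\delta^{\theta'}_\theta$. Then $\psi(\theta)=0$. Since $\delta^{\theta'}$ is an isometry, $\|\psi(x)-\psi(y)\|=d(x,y)$, so $\psi\in\mathrm{Lip}_0(M,\mathcal F_{\theta'}(M))$ with $\|\psi\|_{Lip}=1$ (assuming $M$ has at least two points; otherwise everything is trivial). Theorem \ref{thm:universal} then gives a bounded linear $T$ with $T\circ\delta^\theta=\psi$ and $\|T\|=\|\psi\|_{Lip}=1$.

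\emph{Step 2: simple functions.} Let $S=\sum_i x_i\chi_{A_i}$ with $\{A_i\}$ a partition of $\Omega$. By linearity,
$$T\Bigl(F_\theta-\int_E S\,d\mu\Bigr)=\sum_i\mu(A_i\cap E)\bigl(\delta^{\theta'}_{x_i}-\delta^{\theta'}_\theta\bigr)=F_{\theta'}-\int_E S\,d\mu-\mu(E)\,\delta^{\theta'}_\theta,$$
using $\sum_i\mu(A_i\cap E)=\mu(E)$. This is exactly the claimed identity for simple functions. Note that the additive term is $\mu(E)\delta^{\theta'}_\theta$, which is what the statement denotes $\mu(E)\delta_\theta^{\theta'}$.

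\emph{Step 3: general $f$.} Take simple $S_k\to f$ $\mu$-a.e. with $\sup_k d(\theta,S_k)$ integrable over $E$. Since $d(\theta',S_k)\le d(\theta,S_k)+d(\theta,\theta')$ and $\mu$ is finite, $\sup_k d(\theta',S_k)$ is also integrable over $E$. Hence the same sequence computes both free integrals, via the definition together with Proposition \ref{prop:welldefined}. Pass to the limit in the identity of Step 2, using continuity of $T$.

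Alternatively, Step 3 follows at once from Proposition \ref{prop:freebochner}: $\delta^{\theta'}\circ f=T\circ\delta^\theta\circ f+\delta^{\theta'}_\theta$, and Bochner integrals commute with bounded operators and integrate constants to $\mu(E)$ times the constant.

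\emph{Main obstacle.} The only delicate point is bookkeeping: keeping track of which free space each $\delta$ lives in, and checking $\|T\|=1$ exactly. The latter needs the isometric embedding to give equality rather than just the inequality $\le 1$. The remaining steps are routine once $T$ is defined correctly.
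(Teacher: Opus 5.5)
Your proposal is correct and follows essentially the paper's route: you linearise the same map $x\mapsto\delta^{\theta'}_x-\delta^{\theta'}_\theta$ via the universal property, then commute $T$ with the integral. The paper does this in one step via Theorem \ref{thm:int_dual}, which is your simple-function-plus-limit argument packaged, and your appeal to the norm equality in Theorem \ref{thm:universal} gives $\|T\|=1$ exactly, whereas the paper's proof only records $\|T\|\le 1$.
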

\begin{proof}
    Consider $\varphi\colon (M,d,\theta)\to\mathcal F_{\theta'}(M)$ defined as $\varphi(x):=\delta_x^{\theta'}-\delta_{\theta}^{\theta'}$. We observe that $\varphi\in\operatorname{Lip}_0(M,\mathcal F_{\theta'}(M))$ since $$\varphi(\theta)=\delta_\theta^{\theta'}-\delta_\theta^{\theta'}=0_{\mathcal F_{\theta'}(M)}=\delta_{\theta'}^{\theta'},$$
    and
    $$\|\varphi(x)-\varphi(y)\|_{\mathcal F_{\theta'}(M)}=\|\delta_x^{\theta'}-\delta_{y}^{\theta'}\|_{\mathcal F_{\theta'}(M)}=d(x,y),$$ for all $x,y\in M$. Therefore, the Universal Property gives a linear continous map $T\colon \mathcal F_\theta(M)\to\mathcal F_{\theta'}(M)$ such that $\|T\|\le 1$ and $T(\delta_x^\theta)=\varphi(x)=\delta_x^{\theta'}-\delta_{\theta}^{\theta'}$. Moreover, by Theorem \ref{thm:int_dual} we obtain
    $$T\left(F_\theta-\int_Efd\mu\right)=B-\int_E\varphi\circ fd\mu=B-\int_E\delta_{f}^{\theta'}-\delta_{\theta}^{\theta'}d\mu=F_{\theta'}-\int_Efd\mu-\mu(E)\delta_{\theta}^{\theta'}.$$
\end{proof}

To conclude this section, we present a characterization of the norm of the free integral expressed in terms of the integrand.
\begin{lemma}\label{lemma:simnorm}
    Let $S$ be a simple function for $x_1,\ldots,x_n\in M$ and $A_1,\ldots, A_n\in\Sigma$. Then,
    $$\left\|\int_E Sd\mu\right\|_\mathcal F=\sum_{i=1}^n\mu(A_i\cap E)d(\theta,x_i).$$
    \begin{proof}
        For each $\varphi\in\text{Lip}_0(M)$ and $x\in M$ we get $(\delta_x-\delta_\theta)(\varphi)=\varphi(x)-\varphi(\theta)=\varphi(x)=\delta_x$. Therefore, by the isometry between $M$ and $\mathcal F(M)$ we get $\|\delta_x\|_\mathcal F=\|\delta_x-\delta_\theta\|_\mathcal F=d(\theta,x)$. Consequently,
        \begin{equation}\label{eq:ubint}
            \left\|\int_E Sd\mu\right\|_\mathcal F=\left\|\sum_{i=1}^n\mu(A_i\cap E)\delta_{x_i}\right\|_\mathcal F\leq \sum_{i=1}^n\mu(A_i\cap E)\|\delta_{x_i}\|_\mathcal F =\sum_{i=1}^n\mu(A_i\cap E)d(\theta,x_i).
        \end{equation}

        On the other hand, consider $\tilde\varphi(\cdot):=d(\theta,\cdot)$. Note that $\tilde\varphi(\theta)=0$ and $\|\tilde\varphi\|_{Lip}=1$. Therefore
        \begin{align*}\label{eq:lbint}
            \left\|\int_E Sd\mu\right\|_\mathcal F & = \sup \left\{\left|\sum_{i=1}^n \mu(A_i\cap E)\varphi(x_i)\right|:\varphi\in\text{Lip}_0(M), \, \|\varphi\|_{Lip} \leq 1\right\} \\ & \geq 
            \left|\sum_{i=1}^n\mu(A_i\cap E)\tilde\varphi(x_i)\right|
            \\ & = \sum_{i=1}^n\mu(A_i)d(\theta,x_i).
        \end{align*}
        From this and \eqref{eq:ubint} we conclude the result.
    \end{proof}
\end{lemma}

\begin{proposition}\label{prop:freenormdist}
    Let $f$ be a free integrable function. Then, for all $E\in\Sigma$ holds
    $$\left\|\int_E f d\mu\right\|_\mathcal F=\int_Ed\bigl(\theta,f\bigr)d\mu.$$
    \begin{proof}
        Note that the result holds for all simple functions by Lemma \ref{lemma:simnorm}. Indeed, if $S$ is a simple function for $x_1,\ldots,x_n\in M$ and $A_1\ldots,A_n\in\Sigma$, then $$d\bigl(\theta,S(\omega)\bigr)=\sum_{i=1}^nd\bigr(\theta,x_i\bigr)\chi_{A_i}(\omega),$$
        which is a simple function from $\Omega$ to $\mathbb R$. Consequently, by Lemma \ref{lemma:simnorm} we conclude
        $$\int_Ed\bigl(\theta, S\bigr)d\mu=\sum_{i=1}^n\mu(A_i\cap E)d\bigl(\theta, x_i\bigr)=\left\|\int_E S d\mu\right\|_\mathcal F.$$

        Consider now a free integrable function $f$. Then, by Corollary \ref{cor:duality_free_integral} we get
        \begin{align*}
            \left\|\int_Efd\mu\right\|_\mathcal F&=\sup_{\|\varphi\|_{Lip}\le 1}\left| \left<\int_Efd\mu,\varphi\right>\right|\\
            &=\sup_{\|\varphi\|_{Lip}\le 1}\left| \int_E\varphi\circ fd\mu\right|\\
            & \leq \sup_{\|\varphi\|_{Lip}\le 1} \|\varphi\|_{Lip} \int_E d(\theta,f)d\mu=\int_E d(\theta,f)d\mu.
        \end{align*}
        On the other hand, consider $\tilde\varphi(\cdot)=d(\theta,\cdot)$. Then, $\tilde\varphi\in\operatorname{Lip}_0(M)$ and $\|\tilde\varphi\|_{Lip}=1$. Therefore,
        $$ \left\|\int_Efd\mu\right\|_\mathcal F=\sup_{\|\varphi\|_{Lip}\le 1} \int_E\varphi(f)d\mu\geq \int_Ed(\theta,f)d\mu,$$
        from which the result can be concluded.
    \end{proof}
\end{proposition}

\subsection{The free integral in the Banach case}
When the metric space is a Banach space $(X,\|\cdot\|_X)$ with base point $\theta=0_X$, the free integral can be compared directly with the classical Bochner integral. The aim of this section is therefore to study, in the context of Banach spaces, how the free integral and the Bochner integral compare and what differences exist between them. In this setting, the barycenter map $\beta:\mathcal{F}(X)\to X$ and the decomposition $\mathcal{F}(X)=\delta(X)\oplus\ker\beta$ provide a natural framework for analysing the additional information carried by the free integral.

\begin{proposition}\label{prop:betabochner}
    Let $(X,\|\cdot\|_X)$ be a Banach space, and let $f\colon \Omega\to X$ be a Bochner integrable function over $E\in\Sigma$. Consider the metric space $(M,d)$ induced by $X$, and let $\theta=0_X$. Then, $f$ is free integrable and
    $$\beta\left(F\ \text{-}\int_Efd\mu\right)=B \ \text{-}\int_Efd\mu.$$
\end{proposition}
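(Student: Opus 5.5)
The proof has two parts: first show that $f$ is free integrable, then compute $\beta$ of its free integral.

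For free integrability, Bochner integrability of $f$ includes strong measurability in the vector-valued sense. This means there are simple functions $S_k=\sum_i x_i\chi_{A_i}$ converging to $f$ $\mu$-a.e. Such functions are exactly simple functions in the metric sense of $\mathrm{SF}(M)$, so $f$ is strongly measurable as a metric-valued map. Moreover, $d(\theta,f)=\|f\|_X$, and $\|f\|_X$ is integrable over $E$ because $f$ is Bochner integrable. Theorem~\ref{thm:freeintchar} (applied with the integral taken over $E$, exactly as in its proof) then gives that $f$ is free integrable over $E$.

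For the identity, the quickest route is Theorem~\ref{thm:int_dual}. Take the Banach space $X$ itself and $\varphi=\mathrm{Id}_X\in\mathrm{Lip}_0(X,X)$; this is $1$-Lipschitz and satisfies $\varphi(0_X)=0_X$. By Theorem~\ref{thm:universal}, the associated operator $T_\varphi$ is the unique bounded linear map with $T_\varphi\circ\delta=\mathrm{Id}_X$, which is precisely the barycenter map $\beta$. Theorem~\ref{thm:int_dual} then gives
\[
\beta\left(F\text{-}\int_E f\,d\mu\right)=B\text{-}\int_E \mathrm{Id}_X\circ f\,d\mu=B\text{-}\int_E f\,d\mu.
\]

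As a sanity check, the identity can also be verified directly on simple functions:
\[
\beta\Bigl(\sum_i\mu(A_i\cap E)\delta_{x_i}\Bigr)=\sum_i\mu(A_i\cap E)x_i ,
\]
which is the Bochner integral of $S$ over $E$. The general case follows by passing to the limit, using the continuity of $\beta$ and the dominated approximations built in Theorem~\ref{thm:freeintchar}. In those approximations the truncated $\tilde S_k$ satisfy $\|\tilde S_k\|_X\le\|f\|_X+1$, so by dominated convergence $\int_E\|f-\tilde S_k\|_X\,d\mu\to0$. Hence the same sequence computes both integrals.

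The main obstacle is a bookkeeping point: one must check that the Bochner integral appearing in Theorem~\ref{thm:int_dual}, which is built from the approximating sequence used there, agrees with the standard Bochner integral of $f$. This is settled by the uniqueness of the Bochner integral, since that integral is independent of the approximating sequence.
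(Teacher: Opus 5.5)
Your proposal is correct and follows essentially the same route as the paper. Strong measurability together with $d(0_X,f)=\|f\|_X$ gives free integrability via Theorem~\ref{thm:freeintchar}, and Theorem~\ref{thm:int_dual} applied to $\varphi=\mathrm{Id}_X$, whose linearisation is the barycenter map $\beta$, yields the identity (your extra check on simple functions is sound but not needed).
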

\begin{proof}
    Note that $f$ is strongly measurable and $\|f\|_X$ is integrable in $E$ since $f$ is Bochner integrable. Furthermore, for all $\omega\in\Omega$ it holds
    $$d\bigl(\theta,f(\omega)\bigr)=\|0_X-f(\omega)\|_X=\|f(\omega)\|_X.$$
    Consequently, $f$ is free integrable.

    On the other hand, consider the identity map $\mathrm{Id}$ $\in X$. Since $\mathrm{Id}\in\mathrm{Lip}_0(M)$, by Theorem \ref{thm:int_dual}, there exist a bounded linear operator $\beta\colon\mathcal F(X)\to X$ such that $\beta(\delta_x)=\mathrm{Id}(x)=x$ for all $x\in X$. Moreover,
    $$\beta\left(F\ \text{-}\int_Efd\mu\right)=B\ \text{-}\int_E\mathrm{Id}\circ fd\mu= B\ \text{-}\int_Efd\mu.$$
\end{proof}
Recall that $\mathcal F(X)=\delta(X)\oplus \ker\beta$, where $\beta$ is the barycenter. Then, for any $f\in L^1_\mathcal F(\mu,M)$, its free integral can be expressed as
\begin{equation}\label{eq:freesum}
    F-\int_\Omega fd\mu=\delta_{x_0}+m_f,
\end{equation}
for some $x_0\in X$ and $m_f\in\ker\beta.$ Evaluating $\beta$ in \eqref{eq:freesum} we get
$$\beta\left(F-\int_\Omega fd\mu\right)=\beta(\delta_{x_0})+\beta(m_f)=x_0.$$
Therefore, by Proposition \ref{prop:betabochner} we conclude
$$x_0=B-\int_\Omega fd\mu.$$

\begin{proposition}\label{prop:normmf}
    Suppose that $\mu(\Omega)=1$. Naming $x_0=B-\int_\Omega fd\mu$, for every $f\in L^1_\mathcal F(\mu,M)$ it holds
    $$\|m_f\|_\mathcal F=\int_\Omega\|f(\omega)-x_0\|_Xd\mu(\omega).$$
\end{proposition}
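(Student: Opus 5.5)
Since $\mu(\Omega)=1$, we can write
$$m_f=F-\int_\Omega f\,d\mu-\delta_{x_0}=F-\int_\Omega f\,d\mu-\mu(\Omega)\,\delta_{x_0},$$
which by Proposition \ref{prop:freebochner} equals the Bochner integral
$$m_f=B-\int_\Omega\bigl(\delta_{f(\omega)}-\delta_{x_0}\bigr)\,d\mu(\omega).$$
The goal is then to recognise this integrand as $\delta_f$ computed in a free space based at $x_0$, and to apply Proposition \ref{prop:freenormdist} there.

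\textbf{Change of base point.} Consider the pointed space $(X,\|\cdot\|_X,x_0)$. By the change of base point proposition, applied with $\theta=0_X$ and $\theta'=x_0$, there is a map $T\colon\mathcal F_{0}(X)\to\mathcal F_{x_0}(X)$ with $T(\delta^0_x)=\delta^{x_0}_x-\delta^{x_0}_0$. In fact I would use the cleaner identification that $\mathcal F_0(X)$ and $\mathcal F_{x_0}(X)$ are isometric via $\delta^0_x-\delta^0_{x_0}\mapsto\delta^{x_0}_x$. The reason is that both are the closed span of the molecules $\delta_x-\delta_y$, and the norms are defined by the same Lipschitz functions modulo constants.

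More precisely, define $\varphi\colon(X,0)\to\mathcal F_{0}(X)$ by $\varphi(x)=\delta^0_x$ and $\psi\colon(X,x_0)\to\mathcal F_0(X)$ by $\psi(x)=\delta^0_x-\delta^0_{x_0}$. Then $\psi$ is an isometric map sending $x_0$ to $0$. Its linearisation $T_\psi\colon\mathcal F_{x_0}(X)\to\mathcal F_0(X)$ is an isometry onto its range, as one checks by the duality $\mathrm{Lip}_0$ with base point $x_0$ versus $0$: any $1$-Lipschitz $g$ vanishing at $0$ gives $g-g(x_0)$ vanishing at $x_0$.

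\textbf{Applying the earlier results.} Theorem \ref{thm:int_dual}, or Proposition \ref{prop:freebochner}, then gives
$$T_\psi\Bigl(F_{x_0}-\int_\Omega f\,d\mu\Bigr)=B-\int_\Omega\bigl(\delta^0_f-\delta^0_{x_0}\bigr)\,d\mu=m_f.$$
Taking norms and using that $T_\psi$ is isometric together with Proposition \ref{prop:freenormdist} in the space based at $x_0$, we get
$$\|m_f\|_{\mathcal F}=\int_\Omega d(x_0,f)\,d\mu=\int_\Omega\|f-x_0\|_X\,d\mu.$$
Free integrability with respect to $x_0$ is automatic by the base-point independence remark.

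\textbf{Alternative and main obstacle.} One could avoid isometry questions by a direct duality argument. The upper bound comes from
$$\langle m_f,\varphi\rangle=\int_\Omega\bigl(\varphi(f)-\varphi(x_0)\bigr)\,d\mu\le\int_\Omega\|f-x_0\|_X\,d\mu$$
for $\|\varphi\|_{Lip}\le1$, using $\mu(\Omega)=1$. The lower bound comes from testing against $\varphi(x)=\|x-x_0\|_X-\|x_0\|_X$, which lies in $\mathrm{Lip}_0(X)$ with constant $1$ and yields exactly $\int_\Omega\|f-x_0\|_X\,d\mu$.

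I would actually present this direct version, because it reuses Corollary \ref{cor:duality_free_integral} verbatim. The main care point is the lower bound: it requires the test function to vanish at $0$, which is why the constant $\|x_0\|_X$ has to be subtracted. That constant cancels in the pairing precisely because $\mu(\Omega)=1$.
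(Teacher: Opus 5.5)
Your proof is correct, and the direct duality version you say you would present is the paper's own argument. The upper bound rewrites $\int_\Omega\varphi\circ f\,d\mu-\varphi(x_0)$ as $\int_\Omega(\varphi(f)-\varphi(x_0))\,d\mu$ using $\mu(\Omega)=1$, and the lower bound tests against a distance-to-$x_0$ function.

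Your normalisation $\varphi(x)=\|x-x_0\|_X-\|x_0\|_X$ also repairs a slip in the paper. The paper tests against $\|x-x_0\|_X$ and asserts it lies in $\operatorname{Lip}_0(X)$, although it does not vanish at $0$ when $x_0\neq 0$. The value it obtains is still right, because $\mu(\Omega)=1$ makes the pairing formula insensitive to additive constants, exactly as you note.

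Your first route is a genuinely different and also valid argument. You transport the free integral from $\mathcal F_{x_0}(X)$ to $\mathcal F_0(X)$ via the linearisation $T_\psi$ of $x\mapsto\delta^0_x-\delta^0_{x_0}$, then invoke Proposition \ref{prop:freenormdist} at base point $x_0$. The isometry of $T_\psi$ follows, as you indicate, from the bijection $g\mapsto g-g(x_0)$ between the unit balls of the two $\operatorname{Lip}_0$ spaces. The only other justification needed is Theorem \ref{thm:int_dual} applied to $\psi\in\operatorname{Lip}_0((X,x_0),\mathcal F_0(X))$.

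The two routes trade off differently. The direct route is shorter and needs only Corollary \ref{cor:duality_free_integral}. The base-point route costs the isometry check but gives a structural reading: $m_f$ is the free integral of $f$ computed in the free space pointed at the barycentre $x_0$, so its norm formula is literally Proposition \ref{prop:freenormdist}.
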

\begin{proof}
    Since $m_f=F-\int_\Omega fd\mu-\delta_{x_0}$, then
    $$\|m_f\|_\mathcal F=\sup_{\|\varphi\|_{Lip}\le 1}\langle m_f,\varphi\rangle=\sup_{\|\varphi\|_{Lip}\le 1}\int_\Omega \varphi\circ fd\mu-\varphi(x_0).$$

    Since $\mu(\Omega)=1$, for any $\varphi\in\operatorname{Lip}_0(M)$ such that $\|\varphi\|_{Lip} \leq 1$ yields
    $$\int_\Omega \varphi(f(\omega))d\mu(\omega)-\varphi(x_0)=\int_\Omega \varphi(f(\omega))-\varphi(x_0)d\mu(\omega)\le \int_\Omega \|f(\omega)-x_0\|_X.$$
    Hence, $\|m_f\|_\mathcal F\le \int_\Omega \|f(\omega)-x_0\|_X$. To see the reverse inequality, consider $\tilde\varphi(x)=\|x-x_0\|_X$. It is obvious that $\tilde\varphi\in\operatorname{Lip}_0(X)$ and $\|\tilde\varphi\|_{Lip}=1$. Therefore, we conclude
    $$\|m_f\|_\mathcal F\ge \langle m_f,\tilde\varphi\rangle =\int_\Omega \|f(\omega)-x_0\|_Xd\mu(\omega).$$
\end{proof}

The results above show that the free integral of a Banach-valued function $f$ decomposes into two components with distinct roles.
The term $\delta_{x_0}$, where $x_0=B-\int_\Omega f\,d\mu$, represents the average or centre of mass of $f$ through the Bochner integral. The term $m_f\in\ker\beta$ quantifies the dispersion of
$f$ around that centroid. By Proposition \ref{prop:normmf}, its norm
equals the mean absolute deviation $\int_\Omega\|f-x_0\|\,d\mu$.
Consequently, the free integral provides a strictly more comprehensive mathematical object than the Bochner integral. Whereas the latter reduces the distributional information of $f$ to a single point in the target space, the free integral preserves the intrinsic spread of the distribution.

Furthermore, recall the Jensen inequality. If $(\Omega,\Sigma,\mu)$ is a probability space, then for any $g\in L^1(\mu)$ and $\varphi$ a real convex function, then
$$\varphi\left(\int_\Omega gd\mu\right)\le\int_\Omega \varphi\circ gd\mu.$$
Then, for any convex Lipschitz function $\varphi \in \operatorname{Lip}_0(X)$, the duality pairing
$$\langle m_f, \varphi \rangle = \int_{\Omega} \varphi(f(\omega)) d\mu(\omega) - \varphi(x_0)$$
corresponds to the Jensen inequality gap. Therefore, the element $m_f \in \mathcal{F}(X)$ is an universal representative of these gaps.

\begin{example}\label{ex:dispersion}
    Consider $X=\mathbb{R}^2$ with the Euclidean norm and
    $\theta=0$. Let $\Omega=\{1,2,3,4\}$ equipped with the uniform
    probability measure $\mu(\{i\})=1/4$.

    Define $f:\Omega\to\mathbb{R}^2$ and $g:\Omega\to\mathbb{R}^2$ by
    \[
        f(1)=(1,0),\quad f(2)=(-1,0),\quad f(3)=(0,1),\quad f(4)=(0,-1),
    \]
    \[
        g(1)=\tfrac{1}{\sqrt{2}}(1,1),\quad
        g(2)=\tfrac{1}{\sqrt{2}}(-1,1),\quad
        g(3)=\tfrac{1}{\sqrt{2}}(-1,-1),\quad
        g(4)=\tfrac{1}{\sqrt{2}}(1,-1).
    \]
    A direct computation shows that they share the
    same Bochner integral,
    \[
        B-\int_\Omega f\,d\mu
        = B\text{-}\int_\Omega g\,d\mu = (0,0) = x_0,
    \]
    and the same mean absolute deviation around $x_0$,
    \[
        \int_\Omega\|f(\omega)-x_0\|\,d\mu
        = \int_\Omega\|g(\omega)-x_0\|\,d\mu = 1.
    \]
    In particular, $\|m_f\|_{\mathcal{F}} = \|m_g\|_{\mathcal{F}} = 1$
    by Proposition \ref{prop:normmf}. However, $m_f\neq m_g$.
    Indeed, the function $\varphi:\mathbb{R}^2\to\mathbb{R}$ defined
    by $\varphi(x_1,x_2)=|x_1|$ belongs to $\mathrm{Lip}_0
    (\mathbb{R}^2)$ with $\|\varphi\|_{\mathrm{Lip}}=1$, and
    \[
        \langle m_f,\varphi\rangle
        = \int_\Omega\varphi\circ f\,d\mu - \varphi(x_0)
        = \tfrac{1}{4}(1+1+0+0) = \tfrac{1}{2},
    \]
    while
    \[
        \langle m_g,\varphi\rangle
        = \int_\Omega\varphi\circ g\,d\mu - \varphi(x_0)
        = \tfrac{1}{4}\!\left(\tfrac{1}{\sqrt{2}}+\tfrac{1}{\sqrt{2}}
        +\tfrac{1}{\sqrt{2}}+\tfrac{1}{\sqrt{2}}\right)
        = \tfrac{1}{\sqrt{2}}.
    \]
    This illustrates that the component $m_f\in\ker\beta$ encodes the
    \emph{directional distribution} of the values of $f$ around its
    centroid, not merely the scalar magnitude of the spread. The
    functions $f$ and $g$ disperse their mass at the same distance
    from the origin, but along different directions, and the free
    integral detects this difference while the Bochner integral and
    the mean deviation do not.
\end{example}

The previous example shows that the component $m_f$ can distinguish between functions that the Bochner integral and the mean deviation
cannot separate. A natural question is then: when does $m_f$ carry no additional information, that is, when does the free integral reduce to the Bochner integral? The following result provides a complete answer.

\begin{proposition}\label{prop:mf_zero}
    Let $(\Omega,\Sigma,\mu)$ be a probability space, $(X,\|\cdot\|)$
    a Banach space, and $f\in L^1_{\mathcal{F}}(\mu,X)$. Then the
    following are equivalent:
    \begin{enumerate}[(i)]
        \item $f = x_0$ $\mu$-a.e. for some $x_0\in\mathcal F(M)$.
        \item $m_f = 0$, i.e.,
              $F-\int_\Omega f\,d\mu = \delta_{x_0}$.
        \item The Jensen gap vanishes for every convex Lipschitz
              function: $\int_\Omega\varphi\circ f\,d\mu = \varphi(x_0)$
              for all convex $\varphi\in\mathrm{Lip}_0(X)$.
    \end{enumerate}
\end{proposition}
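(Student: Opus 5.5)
The plan is to prove the cycle (i)$\Rightarrow$(ii)$\Rightarrow$(iii)$\Rightarrow$(i). Throughout, $x_0$ denotes $B-\int_\Omega f\,d\mu\in X$, as in the discussion after Proposition~\ref{prop:betabochner}. In (i) the point $x_0$ should be read as an element of $X$: if $f=x$ a.e., then $x$ is forced to equal the Bochner integral, because $\mu(\Omega)=1$.

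For (i)$\Rightarrow$(ii), I will compute the free integral of the a.e.\ constant function directly. The simple function equal to $x_0$ on $\Omega$ approximates $f$ $\mu$-a.e., and Definition~\ref{def:sfintegral} then gives $F-\int_\Omega f\,d\mu=\mu(\Omega)\delta_{x_0}=\delta_{x_0}$. The uniqueness of the decomposition $\mathcal F(X)=\delta(X)\oplus\ker\beta$ then yields $m_f=0$.

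For (ii)$\Rightarrow$(iii), I will pair the identity $F-\int_\Omega f\,d\mu=\delta_{x_0}$ with an arbitrary $\varphi\in\mathrm{Lip}_0(X)$ and apply Corollary~\ref{cor:duality_free_integral}. This gives $\int_\Omega\varphi\circ f\,d\mu=\varphi(x_0)$ for every such $\varphi$, so in particular for every convex one.

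For (iii)$\Rightarrow$(i), the idea is to use the convex $1$-Lipschitz function $\tilde\varphi(x)=\|x-x_0\|_X-\|x_0\|_X$.
\begin{enumerate}[(a)]
\item Check that $\tilde\varphi\in\mathrm{Lip}_0(X)$: it vanishes at $0$, it is convex, and it is $1$-Lipschitz.
\item Apply (iii) to $\tilde\varphi$. Since $\mu(\Omega)=1$, this gives
$$\int_\Omega\|f-x_0\|_X\,d\mu-\|x_0\|_X=\tilde\varphi(x_0)=-\|x_0\|_X,$$
hence $\int_\Omega\|f-x_0\|_X\,d\mu=0$.
\item Conclude that $f=x_0$ $\mu$-a.e.
\end{enumerate}
The same argument can also be phrased through Proposition~\ref{prop:normmf}, which gives $\|m_f\|_{\mathcal F}=\int_\Omega\|f-x_0\|_X\,d\mu=\langle m_f,\tilde\varphi+\|x_0\|_X\rangle$; the constant is harmless because $\langle m_f,1\rangle=0$ when $\mu(\Omega)=1$.

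The main obstacle is really one of interpretation rather than of technique. Condition (iii) has to be tested against a convex function that is not linear; linear functionals alone only recover the barycenter. The function must also vanish at $0$, and it must detect deviation from $x_0$. The shifted norm $\tilde\varphi$ meets all three requirements, so I expect the rest of the argument to be routine bookkeeping with $\mu(\Omega)=1$.
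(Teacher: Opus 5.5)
Your proposal is correct and follows essentially the same cycle as the paper: (ii)$\Rightarrow$(iii) via the duality formula and (iii)$\Rightarrow$(i) by testing against a shifted norm, the only cosmetic difference being that in (i)$\Rightarrow$(ii) you compute the integral of the constant function directly instead of invoking Proposition~\ref{prop:normmf}. Your normalisation $\tilde\varphi(x)=\|x-x_0\|_X-\|x_0\|_X$ is the correct choice, since the paper's test function $\|\cdot-x_0\|_X$ does not vanish at $0$ and so is not in $\mathrm{Lip}_0(X)$ unless $x_0=0$; that slip is harmless there only because $\mu(\Omega)=1$ makes the constant cancel, exactly as you observe.
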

\begin{proof}
    The implication (i)$\Rightarrow$(ii) is immediate from Proposition \ref{prop:normmf}. Moreover, from the identity
    $\langle m_f,\varphi\rangle = \int_\Omega\varphi\circ f\,d\mu -
    \varphi(x_0)$ it follows
    (ii)$\Rightarrow$(iii). For (iii)$\Rightarrow$(ii), take
    $\varphi(\cdot)=\|\cdot - x_0\|_X$, which is convex and $1$-Lipschitz. Then,
    \[
        0 = \varphi(x_0)=\int_\Omega \|f(\omega)-x_0\|_X\,d\mu(\omega),
    \]
    and since the integrand is non-negative, we conclude $f=x_0$ $\mu$-a.e.
\end{proof}

Having characterised the dispersion component $m_f$ and identified when it vanishes, we now study how it transforms under composition
with Lipschitz maps between Banach spaces. Given a Lipschitz map $g\colon X\to Y$, the linearisation $T_g\colon\mathcal F(X)\to\mathcal F(Y)$ relates the free integrals of $f$ and $g\circ f$ by Proposition \ref{prop:universaltwofree}. The next result describes how this relationship decomposes the dispersion components $m_f$ and $m_{g\circ f}$.

\begin{proposition}\label{prop:dispersioncomp}
    Let $(\Omega,\Sigma,\mu)$ be a probability space, and let
    $(X,\|\cdot\|_X)$, $(Y,\|\cdot\|_Y)$ be Banach spaces with base
    points $\theta_X=0_X$ and $\theta_Y=0_Y$. Let
    $f\in L^1_{\mathcal{F}}(\mu,X)$ and let $g:X\to Y$ be a Lipschitz
    map with $g(0_X)=0_Y$ such that $g\circ f\in
    L^1_{\mathcal{F}}(\mu,Y)$. Write
    \[
        F\text{-}\int_\Omega f\,d\mu = \delta_{x_0}+m_f,\qquad
        F\text{-}\int_\Omega g\circ f\,d\mu = \delta_{y_0}+m_{g\circ f},
    \]
    where $x_0=B\text{-}\int_\Omega f\,d\mu$,\;
    $y_0=B\text{-}\int_\Omega g\circ f\,d\mu$, and
    $m_f\in\ker\beta_X$,\; $m_{g\circ f}\in\ker\beta_Y$. Then,
    \begin{equation}\label{eq:mgf_decomposition}
        m_{g\circ f} = T_g(m_f) + \delta_{g(x_0)} - \delta_{y_0},
    \end{equation}
    where $T_g:\mathcal{F}(X)\to\mathcal{F}(Y)$ is the linearisation
    of $g$ given by Theorem \ref{thm:universaltwo}.
\end{proposition}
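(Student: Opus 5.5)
The plan is to apply $T_g$ to the decomposition of the free integral of $f$ and compare the result with the decomposition of the free integral of $g\circ f$. By Proposition~\ref{prop:universaltwofree}, applied with $M=X$, $N=Y$ and $E=\Omega$, we have
\[
F\text{-}\int_\Omega g\circ f\,d\mu = T_g\left(F\text{-}\int_\Omega f\,d\mu\right).
\]
Substituting $F\text{-}\int_\Omega f\,d\mu=\delta_{x_0}+m_f$ and using the linearity of $T_g$ together with $T_g\circ\delta_X=\delta_Y\circ g$ (Theorem~\ref{thm:universaltwo}) gives
\[
\delta_{y_0}+m_{g\circ f} = T_g(\delta_{x_0})+T_g(m_f)=\delta_{g(x_0)}+T_g(m_f).
\]
Solving for $m_{g\circ f}$ yields \eqref{eq:mgf_decomposition} immediately. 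Note that this uses only linearity in $\mathcal F(Y)$, not the uniqueness of the decomposition $\mathcal F(Y)=\delta(Y)\oplus\ker\beta_Y$.

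Two hypotheses need checking. The first is that $f$ is free integrable, so that Proposition~\ref{prop:universaltwofree} applies; this is given. The integrability of $g\circ f$ is also assumed, and in any case it follows from $\|g(f)\|_Y\le\|g\|_{Lip}\|f\|_X$. The second is that $y_0$ is indeed $B\text{-}\int_\Omega g\circ f\,d\mu$, which is Proposition~\ref{prop:betabochner} applied in $Y$.

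As a consistency check, I would also verify that the right-hand side of \eqref{eq:mgf_decomposition} lies in $\ker\beta_Y$. Applying $\beta_Y$ gives $\beta_Y(T_g m_f)+g(x_0)-y_0$. This is automatic from the identity just derived, since $\beta_Y(m_{g\circ f})=0$. Equivalently, it shows that the defect $T_g(m_f)$ has barycenter $y_0-g(x_0)$, a Jensen-type gap. This interpretation could be remarked on, but it is not needed for the proof.

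I do not expect a real obstacle here. The only subtle point is keeping track of which linear structure is being used: the identity holds in $\mathcal F(Y)$ with its own addition, and $\delta_{g(x_0)}-\delta_{y_0}$ must not be simplified to $\delta_{g(x_0)-y_0}$, because $\delta$ is not linear.
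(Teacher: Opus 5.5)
Your proof is correct and matches the paper's argument: apply $T_g$ to $F\text{-}\int_\Omega f\,d\mu=\delta_{x_0}+m_f$ via Proposition~\ref{prop:universaltwofree}, then use linearity and $T_g\circ\delta_X=\delta_Y\circ g$ to obtain $\delta_{g(x_0)}+T_g(m_f)=\delta_{y_0}+m_{g\circ f}$, and rearrange. Your observation is also accurate that only the linear structure of $\mathcal F(Y)$ is used, and not the uniqueness of the decomposition.
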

\begin{proof}
    By Proposition \ref{prop:universaltwofree}, we have
    \[
        T_g\!\left(F-\int_\Omega f\,d\mu\right)
        = F-\int_\Omega g\circ f\,d\mu.
    \]
    Expanding both sides using the decomposition
    $\mathcal{F}(Z)=\delta(Z)\oplus\ker\beta_Z$ and the linearity of
    $T_g$, we obtain
    \[
        T_g(\delta_{x_0}) + T_g(m_f) = \delta_{y_0} + m_{g\circ f}.
    \]
    Since $T_g\circ\delta_X=\delta_Y\circ g$ by Theorem \ref{thm:universaltwo}, we have
    $T_g(\delta_{x_0})=\delta_{g(x_0)}$. From this the result can be derived.
\end{proof}

\begin{corollary}\label{cor:mgfchar}
    Under the hypotheses of Proposition \ref{prop:dispersioncomp}, the following
    statements are equivalent:
    \begin{enumerate}[(i)]
        \item $m_{g\circ f} = T_g(m_f)$.
        \item $\delta_{g(x_0)} = \delta_{y_0}$.
        \item $g(x_0) = y_0$, that is,
        \[
            g\!\left(B\text{-}\int_\Omega f\,d\mu\right)
            = B\text{-}\int_\Omega g\circ f\,d\mu.
        \]
    \end{enumerate}
    In particular, this holds whenever $g$ is linear.
\end{corollary}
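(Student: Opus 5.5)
The plan is to derive everything from identity \eqref{eq:mgf_decomposition} of Proposition~\ref{prop:dispersioncomp},
\[
m_{g\circ f} = T_g(m_f) + \delta_{g(x_0)} - \delta_{y_0},
\]
together with the fact that $\delta\colon Y\to\mathcal F(Y)$ is an isometric embedding. No further analysis of the integrals is required: the corollary is a chain of elementary equivalences.

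For (i)$\Leftrightarrow$(ii), subtract $T_g(m_f)$ from both sides of \eqref{eq:mgf_decomposition}. This gives $m_{g\circ f}-T_g(m_f)=\delta_{g(x_0)}-\delta_{y_0}$, so the left side vanishes exactly when the right side does. For (ii)$\Leftrightarrow$(iii), use $\|\delta_{g(x_0)}-\delta_{y_0}\|_{\mathcal F}=\|g(x_0)-y_0\|_Y$. Hence $\delta_{g(x_0)}=\delta_{y_0}$ if and only if $g(x_0)=y_0$, and the reverse implication is trivial. Recalling $x_0=B\text{-}\int_\Omega f\,d\mu$ and $y_0=B\text{-}\int_\Omega g\circ f\,d\mu$ from the setup of Proposition~\ref{prop:dispersioncomp}, condition (iii) is exactly the displayed commutation of $g$ with the Bochner integral.

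For the final assertion, suppose $g$ is linear; being Lipschitz, it is then a bounded linear operator $X\to Y$. Since $f$ is free integrable with base point $0_X$, Theorem~\ref{thm:freeintchar} shows that $\|f\|_X$ is integrable, so $f$ is Bochner integrable by the Bochner criterion quoted before Proposition~\ref{prop:freebochner}. Bounded operators commute with the Bochner integral, so
\[
g\Bigl(B\text{-}\int_\Omega f\,d\mu\Bigr)=B\text{-}\int_\Omega g\circ f\,d\mu .
\]
One can see this directly on simple functions, where it is linearity of $g$, and pass to the limit using continuity of $g$ and the definition of the Bochner integral. This gives (iii), and therefore (i).

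I expect no real obstacle here. The only point needing care is to justify that $f$ is Bochner integrable before commuting $g$ with the integral, and Theorem~\ref{thm:freeintchar} provides exactly that.
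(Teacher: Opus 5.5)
Your proposal is correct and follows the paper's proof essentially step for step. You read (i)$\Leftrightarrow$(ii) off \eqref{eq:mgf_decomposition}, get (ii)$\Leftrightarrow$(iii) from injectivity of the isometric embedding $\delta$, and handle the linear case by commuting a bounded operator with the Bochner integral; your explicit appeal to Theorem~\ref{thm:freeintchar} to guarantee Bochner integrability of $f$ is a small justification the paper leaves implicit.
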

\begin{proof}
    The equivalence (i)$\Leftrightarrow$(ii) is immediate from
    \eqref{eq:mgf_decomposition}. The equivalence
    (ii)$\Leftrightarrow$(iii) follows from the fact that $\delta$ is
    an isometric embedding and hence injective.

    If $g$ is linear, then $g\circ f$ is Bochner integrable and
    \[
        B\text{-}\int_\Omega g\circ f\,d\mu
        = g\!\left(B\text{-}\int_\Omega f\,d\mu\right) = g(x_0),
    \]
    so (iii) holds.
\end{proof}

\begin{remark}
    Proposition \ref{prop:dispersioncomp} yields an estimate
    for the norm of $m_{g\circ f}$. By the triangle inequality and
    \eqref{eq:mgf_decomposition},
    \[
        \|m_{g\circ f}\|_{\mathcal{F}(Y)}
        \leq \|T_g(m_f)\|_{\mathcal{F}(Y)}
        + \|\delta_{g(x_0)}-\delta_{y_0}\|_{\mathcal{F}(Y)}
        = \|T_g(m_f)\|_{\mathcal{F}(Y)} + \|g(x_0)-y_0\|_Y.
    \]
    Since $\|T_g\|=\|g\|_{\mathrm{Lip}}$, we obtain
    \begin{equation}\label{eq:norm_mgf_bound}
        \|m_{g\circ f}\|_{\mathcal{F}(Y)}
        \leq \|g\|_{\mathrm{Lip}}\,\|m_f\|_{\mathcal{F}(X)}
        + \|g(x_0)-y_0\|_Y.
    \end{equation}
    If $g$ is linear, by Corollary \ref{cor:mgfchar} and
    \eqref{eq:norm_mgf_bound} we deduce
    \[
        \|m_{g\circ f}\|_{\mathcal{F}(Y)}
        \leq \|g\|_{\mathrm{Lip}}\,\|m_f\|_{\mathcal{F}(X)}.
    \]
    That is, linear maps cannot amplify dispersion beyond their Lipschitz constant. For nonlinear $g$, the additional term accounts for the distortion introduced by the nonlinearity of $g$ at the centroid $x_0$.
\end{remark}

\subsection{The metric space of integrable functions}
In the classical theory of Bochner integration over a Banach space $X$, the space $L^1(\mu,X)$ of integrable functions inherits also a Banach space structure. In our metric framework, the absence of an underlying linear structure implies that the space $L^1_{\mathcal{F}}(\mu, M)$ cannot be equipped with a natural vector space structure. Nevertheless, we show that it
admits a natural metric, analogous to the
$L^1$-norm, and that the resulting metric space inherits some fundamental properties from $M$. Furthermore, we will study also how the resulting metric space of free integrable functions can be linearised through its free space. From now on we will understand that in $L_\mathcal F^1(\mu,M)$ there is the equivalence relation $f\sim g$ if and only if $f=g$ $\mu$-a.e.

\begin{proposition}
    The function $\rho\colon L_\mathcal F^1(\mu,M)\times L_\mathcal F^1(\mu,M)\to \mathbb R$ defined as $\rho(f,g)=\int_\Omega d(f,g)d\mu$ is a metric. Moreover, if the metric space $(M,d)$ is complete, then $(L_\mathcal F^1(\mu,M),\rho)$ is a complete metric space too.
    \begin{proof}
        Since the metric properties of $\rho$ are straightforward, we will only prove that $(L_\mathcal F^1(\mu,M),\rho)$ is complete whenever $(M,d)$ is complete. 
        
        Consider a Cauchy sequence of free integrable functions $\{f_n\}_n$. Therefore, for any $\varepsilon>0$ there exist $N(\varepsilon)\in\mathbb N$ such that $\rho(f_n,f_m)<\varepsilon$ if $n,m\geq N(\varepsilon)$. In particular, we can consider for each $k\in\mathbb N$ some $N_k\in\mathbb N$ such that $\rho(f_n,f_m)<2^{-k}$ if $n,m\geq N_k$ and $N_k\leq N_{k+1}$. Define now 
        $$g(\omega):=\sum_{k=1}^\infty d\bigl(f_{N_{k+1}}(\omega),f_{N_k}(\omega)\bigr).$$
        By the Monotone Convergence Theorem we obtain
        \begin{align*}
            \int_\Omega gd\mu&=\int_\Omega \sum_{k=1}^\infty d\bigl(f_{N_{k+1}}(\omega),f_{N_k}(\omega)\bigr)d\mu\\
            &=\sum_{k=1}^\infty\int_\Omega d\bigl(f_{N_{k+1}}(\omega),f_{N_k}(\omega)\bigr)d\mu\\
            &=\sum_{k=1}^\infty \rho(f_{N_{k+1}},f_{N_k})= \leq 1.
        \end{align*}
        Consequently, there exist a null set $\Omega_0$ such that $g(\omega)$ is finite for all $\omega\in\Omega\setminus\Omega_0$. That is, the series defined by $g$ is convergent and hence $\{f_{N_k}(\omega)\}_k$ is a Cauchy sequence in $M$. Since $M$ is complete, there exist its limit. Define $f(\omega)=\lim_kf_{N_k}(\omega)$ for all $\omega\in\Omega\setminus\Omega_0.$

        It is obvious that $f$ is strongly measurable since it is the pointwise limit of strongly measurable functions. On the other hand, recall that $d\bigl(\theta,f_{N_1}\bigr)$ is integrable since $f_{N_1}$ is free integrable. Moreover, for each $p\in\mathbb N$ note that by the triangular we can get
        $$d(f_{N_1},f_{N_p})\leq\sum_{j=1}^{p-1} d(f_{N_j},f_{N_{j+1}}).$$
        The continuity of $d$ yields
        \begin{equation}\label{eq:gbound}
            d(f_{N_1}(\omega),f(\omega))=\lim_pd(f_{N_1}(\omega),f_{N_p}(\omega))\leq \lim_p\sum_{j=1}^{p-1}d(f_{N_j}(\omega),f_{N_{j+1}}(\omega))=g(\omega),
        \end{equation}
        for all $\omega\in\Omega\setminus\Omega_0.$ Consequently,
        $$d\big(\theta,f\bigr)\leq d\bigl(\theta,f_{N_1}\bigr)+d\bigl(f_{N_1},f\bigr)\leq d\bigl(\theta,f_{N_1}\bigr)+g .$$
        That is, $d\big(\theta,f\bigr)$ is integrable and hence $f$ is free integrable.

        Finally, to complete the proof let us show that $f$ is the limit of the sequence $\{f_n\}_n$. By a similar argument presented in \eqref{eq:gbound} we can bound
        $$d(f_{N_k}(\omega),f(\omega))\leq g(\omega),$$
        for each $k\in\mathbb N$ and $\omega\in\Omega\setminus \Omega_0$. Since the sequence of measurable functions $d(f_{N_k}(\omega),f(\omega))$ is pointwise convergent to 0 $\mu$-a.e. and bounded by the integrable function $g$, by the Dominated Convergence Theorem we get $\lim_k \rho(f_{N_k},f)=0$. From this and 
        $$\rho(f_n,f)\leq \rho(f_n,f_{N_k})+\rho(f_{N_k},f),$$
        we conclude that $\{f_n\}_n$ converges to the free integrable function $f$.
    \end{proof}
\end{proposition}

\begin{remark}
   Consider a compact metric space $M$. In this case, recall that $M$ is also complete and separable. Thus, strong and weak measurability are equivalent. Furthermore, there exists a constant $K>0$ such that $d(a,b)\leq K$ for all $a,b\in M$. In particular, for any measurable function $f\colon M\to\Omega$ we have that $\int_\Omega d(f(\omega),\theta)d\mu\leq K\mu(\Omega)<\infty$.  That is, every measurable function is integrable and the set of all measurable functions is complete with respect to the metric $\rho$.
\end{remark}

The lack of vector space structure in $L^1_\mathcal F(\mu,M)$ means that the free integral operator $f\in L^1_\mathcal F(\mu,M)\mapsto F-\int_\Omega fd\mu$ cannot be linear, as the standard case of Lebesgue or Bochner integrals in Banach spaces. However, the free integral operator can be linearized as we will show in the next. From the definition of $L^1_\mathcal F(\mu, M)$ as a metric space follows some results that will be useful for this purpose.
\begin{corollary}\label{cor:freeintembedding}
    $L_\mathcal F^1(\mu,M)$ embeds isometrically into $L^1(\mu,\mathcal F(M))$.
\end{corollary}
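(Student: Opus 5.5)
The natural candidate for the embedding is $\Delta\colon L^1_\mathcal F(\mu,M)\to L^1(\mu,\mathcal F(M))$, $\Delta(f):=\delta\circ f=\delta_f$. The plan is to check three things in order: that $\Delta$ is well defined, that it respects the a.e.\ equivalence classes, and that it preserves distances.

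First, well-definedness. Let $f\in L^1_\mathcal F(\mu,M)$. By Proposition~\ref{prop:freebochner}, $\delta_f$ is strongly measurable in the vector-valued sense and Bochner integrable. This uses Theorem~\ref{thm:freeintchar} together with the identity $\|\delta_{f(\omega)}\|_\mathcal F=d(\theta,f(\omega))$. Hence $\delta_f\in L^1(\mu,\mathcal F(M))$.

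Second, compatibility with equivalence classes. If $f=g$ $\mu$-a.e., then $\delta_f=\delta_g$ $\mu$-a.e., so $\Delta$ passes to the quotient. Conversely, if $\delta_f=\delta_g$ a.e., then injectivity of $\delta$ gives $f=g$ a.e. Injectivity of $\Delta$ on classes, however, also follows from the isometry proved next.

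Third, the isometry. Since $\delta$ is an isometric embedding of $M$ into $\mathcal F(M)$, for every $\omega$ we have
$$\|\delta_{f(\omega)}-\delta_{g(\omega)}\|_\mathcal F=d\bigl(f(\omega),g(\omega)\bigr).$$
Integrating over $\Omega$ gives
$$\|\Delta f-\Delta g\|_{L^1(\mu,\mathcal F(M))}=\int_\Omega\|\delta_f-\delta_g\|_\mathcal F\,d\mu=\int_\Omega d(f,g)\,d\mu=\rho(f,g).$$

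The only mildly delicate point is measurability of $\omega\mapsto d(f(\omega),g(\omega))$, which is needed for $\rho$ to make sense. It is handled by the pointwise identity above: $\delta_f-\delta_g$ is strongly measurable as a difference of strongly measurable functions, so its norm is measurable, and that norm equals $d(f,g)$. I do not expect any real obstacle here.
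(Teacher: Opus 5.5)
Your proposal is correct and follows the paper's proof. It uses the same map $f\mapsto\delta\circ f$, gets well-definedness from Proposition~\ref{prop:freebochner}, and obtains the isometry by integrating the pointwise identity $\|\delta_{f(\omega)}-\delta_{g(\omega)}\|_{\mathcal F}=d(f(\omega),g(\omega))$; you also make explicit the ``standard arguments'' (compatibility with a.e.\ classes and measurability of $d(f,g)$) that the paper leaves to the reader.
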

\begin{proof}
    For all free integrable function $f\in L^1_\mathcal F(\mu, M)$, by Proposition \ref{prop:freebochner} we get that $\delta_f\in\ L^1(\mu,\mathcal F(M))$. Therefore, the map $J\colon L_\mathcal F^1(\mu,M)\to L^1(\mu,\mathcal F(M))$ defined as $(Jf)(\omega):=\delta_{f(\omega)}$ for all $\omega\in\Omega$ is well defined. Furthermore, by standard arguments can be proved that $\|Jf-Jg\|_{L^1}=\rho(f,g)$, and hence $L_\mathcal F^1(\mu,M)$ embeds isometricaly into $L^1(\mu,\mathcal F(M))$.
\end{proof}

\begin{corollary}\label{cor:filipschitz}
    The free integral operator is 1-Lipschitz.
\end{corollary}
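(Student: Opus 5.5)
The plan is to show that for $f,g\in L^1_\mathcal F(\mu,M)$,
$$\left\|F-\int_\Omega f\,d\mu-F-\int_\Omega g\,d\mu\right\|_\mathcal F\le\rho(f,g)=\int_\Omega d(f,g)\,d\mu.$$
I would prove it the same way for every $E\in\Sigma$, with $\int_E$ in place of $\int_\Omega$. There are two natural routes, and I would use the cleanest one and keep the other as a check.

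\emph{Route via Bochner.} By Proposition~\ref{prop:freebochner}, $F-\int_\Omega f\,d\mu=B-\int_\Omega \delta_f\,d\mu$, and likewise for $g$. Linearity of the Bochner integral then gives
$$F-\int_\Omega f\,d\mu-F-\int_\Omega g\,d\mu=B-\int_\Omega(\delta_f-\delta_g)\,d\mu.$$
The standard norm estimate for the Bochner integral bounds the norm of this by
$$\int_\Omega\|\delta_f-\delta_g\|_\mathcal F\,d\mu=\int_\Omega d(f,g)\,d\mu,$$
using that $\delta$ is an isometric embedding. Put differently, the free integral operator factors as the Bochner integral, which is a norm-one linear map on $L^1(\mu,\mathcal F(M))$, composed with the isometric embedding $J$ of Corollary~\ref{cor:freeintembedding}. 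A composition of a $1$-Lipschitz map with an isometry is $1$-Lipschitz.

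\emph{Route via duality, as a check.} By Corollary~\ref{cor:duality_free_integral}, for any $\varphi\in\mathrm{Lip}_0(M)$ with $\|\varphi\|_{Lip}\le1$,
$$\left\langle F-\int f-F-\int g,\varphi\right\rangle=\int_\Omega\bigl(\varphi\circ f-\varphi\circ g\bigr)\,d\mu\le\int_\Omega d(f,g)\,d\mu.$$
Taking the supremum over all such $\varphi$ gives the same bound.

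The only points needing care are that $d(f,g)$ is measurable and integrable, and that $\varphi\circ f$ and $\varphi\circ g$ are integrable. Measurability follows because $d(f,g)$ is the a.e.\ limit of $d(S_k,S'_k)$ for simple approximants. Integrability follows from $d(f,g)\le d(\theta,f)+d(\theta,g)$ together with Theorem~\ref{thm:freeintchar}. I do not expect a genuine obstacle here.
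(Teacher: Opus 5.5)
Your Bochner route is the paper's argument: Proposition~\ref{prop:freebochner} plus the standard Bochner norm estimate gives $\|\int_\Omega f\,d\mu-\int_\Omega g\,d\mu\|_{\mathcal F}\le\rho(f,g)$, and your duality check is also valid. The paper additionally shows that the Lipschitz constant is exactly $1$: the constant function $\theta$ has free integral $0$, so Proposition~\ref{prop:freenormdist} gives $\|\int_\Omega f\,d\mu\|_{\mathcal F}=\rho(f,\theta)$; since Theorem~\ref{thm:linearisation} relies on this equality to get $\|I\|=1$, it is worth adding that line.
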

\begin{proof}
    For all $f,g\in L^1_\mathcal F(\mu, M)$, by Proposition \ref{prop:freebochner} and standard techniques we get $$\left\|\int_\Omega fd\mu-\int_\Omega gd\mu\right\|_\mathcal F\le \rho(f,g).$$
    That is, the free integral operator is Lipschitz with Lipschitz constant no greater than 1. Finally, by Proposition \ref{prop:freenormdist} we deduce that the Lipschitz constant is equal to 1.
\end{proof}

In the following we provide the canonical linearisation of the integration process through the free space of $L^1_{\mathcal{F}}(\mu,M)$. To see this, let $\eta:L^1_{\mathcal{F}}(\mu,M)\to\mathcal{F}(L^1_{\mathcal{F}}(\mu,M))$ denote its canonical isometric embedding into its free space.
\begin{theorem}\label{thm:linearisation}
    There exists a unique bounded linear
    operator
    $$
    I:\mathcal{F}(L^1_{\mathcal{F}}(\mu,M))\to\mathcal{F}(M)
    $$
    with $\|I\|=1$ such that
    \begin{equation}\label{eq:linearisation}
        F-\int_\Omega f\,d\mu = I(\eta(f))
        \quad\text{for all }
        f\in L^1_{\mathcal{F}}(\mu,M).
    \end{equation}
    Moreover, $I$ admits the factorisation $I=B\circ\tilde{J}$,
    where $B:L^1(\mu,\mathcal{F}(M))\to\mathcal{F}(M)$ is the
    Bochner integral operator and
    $\tilde{J}:\mathcal{F}(L^1_{\mathcal{F}}(\mu,M))\to
    L^1(\mu,\mathcal{F}(M))$ is the linearisation of the isometric
    embedding $J$ from Corollary \ref{cor:freeintembedding}.
\end{theorem}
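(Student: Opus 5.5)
The plan is to apply the universal property (Theorem \ref{thm:universal}) to the free integral operator itself, viewed as a map on the pointed metric space $(L^1_{\mathcal F}(\mu,M),\rho)$. This first requires fixing a base point. The natural choice is the constant function $\theta$, since $\int_\Omega d(\theta,\theta)\,d\mu=0<\infty$ shows it lies in $L^1_{\mathcal F}(\mu,M)$. The map $\Phi\colon f\mapsto F\text{-}\int_\Omega f\,d\mu$ then sends this base point to $\mu(\Omega)\delta_\theta=0$. By Corollary \ref{cor:filipschitz}, $\Phi$ is $1$-Lipschitz with $\|\Phi\|_{Lip}=1$, so $\Phi\in\operatorname{Lip}_0(L^1_{\mathcal F}(\mu,M),\mathcal F(M))$. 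Theorem \ref{thm:universal} then yields a unique bounded linear $I$ with $I\circ\eta=\Phi$ and $\|I\|=\|\Phi\|_{Lip}=1$. This gives \eqref{eq:linearisation} together with uniqueness. Uniqueness can also be seen directly: any bounded linear $I'$ satisfying \eqref{eq:linearisation} agrees with $I$ on $\operatorname{span}\eta(L^1_{\mathcal F}(\mu,M))$, which is dense.

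For the factorisation, the approach is to treat $J$ from Corollary \ref{cor:freeintembedding} as a Lipschitz map into the Banach space $L^1(\mu,\mathcal F(M))$. $J$ is an isometry, and $J(\theta)=\delta_\theta=0$ (the constant zero function). So again by Theorem \ref{thm:universal} there is a unique bounded linear $\tilde J\colon\mathcal F(L^1_{\mathcal F}(\mu,M))\to L^1(\mu,\mathcal F(M))$ with $\tilde J\circ\eta=J$ and $\|\tilde J\|=1$. The Bochner integral $B\colon L^1(\mu,\mathcal F(M))\to\mathcal F(M)$ is linear with $\|B\|\le 1$. Therefore $B\circ\tilde J$ is a bounded linear operator, and Proposition \ref{prop:freebochner} gives, for every $f$,
\[
(B\circ\tilde J)(\eta(f))=B\text{-}\int_\Omega \delta_f\,d\mu=F\text{-}\int_\Omega f\,d\mu .
\]
By the uniqueness in the first step, $I=B\circ\tilde J$.

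The only delicate points are bookkeeping ones rather than real obstacles. First, the base point of $L^1_{\mathcal F}(\mu,M)$ must be chosen so that both $\Phi$ and $J$ vanish there; the constant function $\theta$ does this. Second, $\|I\|=1$, and not merely $\le 1$, should follow from the Lipschitz constant being exactly $1$. This in turn relies on Proposition \ref{prop:freenormdist}: for $f$ with $\int_\Omega d(\theta,f)\,d\mu>0$, taking $g=\theta$ gives $\|\Phi(f)-\Phi(\theta)\|_{\mathcal F}=\rho(f,\theta)$. We should assume $\mu(\Omega)>0$ and $M\neq\{\theta\}$ so that such an $f$ exists; otherwise both spaces are trivial.
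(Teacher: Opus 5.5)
Your proposal is correct and follows essentially the same route as the paper. You apply the universal property to the $1$-Lipschitz free integral operator to obtain $I$, apply it again to the isometry $J$ (which vanishes at the base point $\theta$) to obtain $\tilde J$, and identify $I=B\circ\tilde J$ through Proposition~\ref{prop:freebochner} together with agreement on $\eta(L^1_{\mathcal F}(\mu,M))$; your explicit choice of base point, and your use of uniqueness (equivalently, density of $\operatorname{span}\eta(L^1_{\mathcal F}(\mu,M))$ rather than of $\eta(L^1_{\mathcal F}(\mu,M))$ itself), are simply more careful bookkeeping than the paper's version.
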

\begin{proof}
    By Corollary \ref{cor:filipschitz}, the free integral operator is a Lipschitz map from $L^1_{\mathcal{F}}(\mu,M)$ to $\mathcal{F}(M)$ with Lipschitz constant $1$. Then, the Universal Property yields a unique bounded linear operator $I$ with $\|I\|=1$ satisfying \eqref{eq:linearisation}.

    For the factorisation, the isometric embedding $J:L^1_{\mathcal{F}}(\mu,M)\to L^1(\mu,\mathcal{F}(M))$ from
    Corollary \ref{cor:freeintembedding} satisfies $J(\theta)=\delta_\theta=0$ in $L^1(\mu,\mathcal{F}(M))$, so by
    the Universal Property there exists a unique bounded linear operator
    $\tilde{J}:\mathcal{F}(L^1_{\mathcal{F}}(\mu,M))\to
    L^1(\mu,\mathcal{F}(M))$ with $\|\tilde{J}\|=1$ and
    $\tilde{J}\circ\eta=J$. Now, for every
    $f\in L^1_{\mathcal{F}}(\mu,M)$, note that
    \[
        (B\circ\tilde{J})(\eta(f))
        = B(Jf)
        = B(\delta\circ f)
        = B-\int_\Omega\delta_{f(\omega)}\,d\mu(\omega)
        = F-\int_\Omega f\,d\mu
        = I(\eta(f)).
    \]
    Since both $I$ and $B\circ\tilde{J}$ are bounded linear operators, and $\eta(L^1_{\mathcal{F}}(\mu,M))$ is a dense subspace of
    $\mathcal{F}(L^1_{\mathcal{F}}(\mu,M))$, we conclude $I=B\circ\tilde{J}$.
\end{proof}

The factorisation $I=B\circ\tilde{J}$ can be summarised in the
following commutative diagram:
$$
\begin{tikzcd}[row sep=large, column sep=large]
    L^1_{\mathcal{F}}(\mu,M)
        \arrow[d,"\eta"']
        \arrow[rd,"J"]
    \\
    \mathcal{F}(L^1_{\mathcal{F}}(\mu,M))
        \arrow[r,"\tilde{J}"']
    & L^1(\mu,\mathcal{F}(M))
        \arrow[r,"B"']
    & \mathcal{F}(M)
\end{tikzcd}
$$

\begin{corollary}\label{cor:adjoint}
    The adjoint operator
    $I^*:\mathrm{Lip}_0(M)\to
    \mathrm{Lip}_0(L^1_{\mathcal{F}}(\mu,M))$
    is given by
    \begin{equation}\label{eq:adjoint_formula}
        (I^*\varphi)(f) = \int_\Omega\varphi\circ f\,d\mu,
    \end{equation}
    for all $\varphi\in\mathrm{Lip}_0(M)$ and
    $f\in L^1_{\mathcal{F}}(\mu,M)$.
\end{corollary}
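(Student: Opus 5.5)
The plan is to compute the adjoint directly from its definition and reduce everything to the duality formula of Corollary \ref{cor:duality_free_integral}. By the isometric identification $\mathcal F(M)^*\cong\mathrm{Lip}_0(M)$, and the analogous one for $L^1_{\mathcal F}(\mu,M)$ with base point the constant function $\theta$, the adjoint $I^*$ of the bounded operator $I$ from Theorem \ref{thm:linearisation} maps $\mathrm{Lip}_0(M)$ into $\mathrm{Lip}_0(L^1_{\mathcal F}(\mu,M))$. It is characterised by
$$\langle m, I^*\varphi\rangle=\langle I(m),\varphi\rangle \quad\text{for all } m\in\mathcal F(L^1_{\mathcal F}(\mu,M)).$$
Under the identification, an element $\psi$ of $\mathrm{Lip}_0(L^1_{\mathcal F}(\mu,M))$ acts on $\mathcal F(L^1_{\mathcal F}(\mu,M))$ so that $\psi(f)=\langle\eta(f),\psi\rangle$.

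First, fix $\varphi\in\mathrm{Lip}_0(M)$ and $f\in L^1_{\mathcal F}(\mu,M)$. Then
$$(I^*\varphi)(f)=\langle \eta(f),I^*\varphi\rangle=\langle I(\eta(f)),\varphi\rangle=\left\langle F-\int_\Omega f\,d\mu,\varphi\right\rangle,$$
where the last equality is \eqref{eq:linearisation}. Second, Corollary \ref{cor:duality_free_integral} with $E=\Omega$ turns the right-hand side into $\int_\Omega\varphi\circ f\,d\mu$, which is exactly \eqref{eq:adjoint_formula}.

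The only point needing care is well-definedness rather than the computation itself. The map $f\mapsto\int_\Omega\varphi\circ f\,d\mu$ must be a genuine element of $\mathrm{Lip}_0(L^1_{\mathcal F}(\mu,M))$: it should vanish at the base point and not depend on the representative of the $\mu$-a.e.\ class of $f$. This follows automatically from the computation above, since $I^*\varphi$ is a priori such an element. It can also be checked directly: $\varphi(\theta)=0$, and
$$\left|\int_\Omega\varphi\circ f-\varphi\circ g\,d\mu\right|\le\|\varphi\|_{Lip}\,\rho(f,g),$$
so the map is Lipschitz with constant at most $\|\varphi\|_{Lip}$, consistent with $\|I^*\|=\|I\|=1$.
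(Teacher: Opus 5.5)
Your proof is correct and follows the paper's argument: you write $(I^*\varphi)(f)=\langle\eta(f),I^*\varphi\rangle=\langle I(\eta(f)),\varphi\rangle$, apply \eqref{eq:linearisation}, and conclude with Corollary~\ref{cor:duality_free_integral}. Your direct check that $f\mapsto\int_\Omega\varphi\circ f\,d\mu$ vanishes at the constant function $\theta$ and is $\|\varphi\|_{Lip}$-Lipschitz with respect to $\rho$ is also correct, and it makes explicit a point the paper leaves implicit.
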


\begin{proof}
    By the identification
    $\mathcal{F}(H)^*\cong\mathrm{Lip}_0(H)$ for any pointed metric
    space $H$, the adjoint $I^*$ maps
    $\mathrm{Lip}_0(M)$ into
    $\mathrm{Lip}_0(L^1_{\mathcal{F}}(\mu,M))$. For every
    $\varphi\in\mathrm{Lip}_0(M)$ and
    $f\in L^1_{\mathcal{F}}(\mu,M)$, using
    \eqref{eq:linearisation} and
    Corollary~\ref{cor:duality_free_integral} we compute
    \[
        (I^*\varphi)(f)
        = \langle\eta(f),I^*\varphi\rangle
        = \langle I(\eta(f)),\varphi\rangle
        = \left\langle F\text{-}\int_\Omega f\,d\mu,\,
        \varphi\right\rangle
        = \int_\Omega\varphi\circ f\,d\mu.
        \qedhere
    \]
\end{proof}

\section{Geometric aspects of the free integral}\label{sec:geometry}
In this section, we connect the analytic framework of metric-valued integration developed in Section 3 with geometric properties of Lipschitz-free spaces. In particular, we will study the extremal structure of its unit ball $B_{\mathcal F(M)}$. Recall that it remains as an open question whether all extreme points of $B_{\mathcal F(M)}$ are elementary molecules $m_{xy} = (\delta_x - \delta_y)/d(x,y)$ (see \cite{aliagaconvex} and the references therein).
Recently, in \cite{aliagaconvex} is introduced the concept of convex integrals of molecules in $\mathcal F(M)$ as a continuous counterpart to convex series, building on the de Leeuw representation of functionals on $\mathrm{Lip}_0(M)$. An element $m\in\mathrm{Lip}_0(M)$ is called a convex integral of molecules if it admits an optimal de Leeuw representation that is concentrated on the off-diagonal set $\tilde M:=\{(x,y)\in M\times M\colon x\neq y\}$. A key geometric result in \cite{aliagaconvex} regarding such representations is that every extreme point of $B_{\mathcal F(M)}$ that is a convex integral of molecules must be an elementary molecule \cite[Theorem 6.1]{aliagaconvex}.
Our aim is to show that the free integral provides a natural source of convex integrals of molecules. Specifically, for any $f\in L_\mathcal F^1(\mu, M)$, the pushforward measure $\nu=f\#\mu$ on $M$ induces a functional $\hat{\nu}\in \mathcal F(M)$ that coincides with the free integral of $f$. Under appropriate completeness assumptions on $M$, this pushforward is a Radon measure, and the resulting functional is a convex integral of molecules by \cite[Theorem 3.1]{aliagaconvex}. This connection allows us to transfer geometric information from the theory of de Leeuw representations to the metric-valued integration setting, and conversely, to use integration-theoretic tools to study the extremal structure of $\mathcal{F}(M)$. To clarify this, we first recall several preliminary definitions and results.

Let $(M,d,\theta)$ be a pointed metric space. The de Leeuw operator is the linear isometric embedding of the Lipschitz functions into the bounded continuous functions $\Phi : \mathrm{Lip}_0(M) \to C_b(\tilde{M})$ given by
$$\Phi\varphi(x,y) = \frac{\varphi(x)-\varphi(y)}{d(x,y)}.$$
Since $\tilde{M}$ is generally not compact, we consider its Stone-Čech compactification $\gamma\tilde{M}$, which allows us to isometrically identify $C_b(\tilde{M})$ with $C(\gamma\tilde{M})$. By the Riesz-Markov-Kakutani theorem, the topological dual of $C(\gamma\tilde{M})$ is isometrically isomorphic to the Banach space of Radon measures $\mathcal{M}(\gamma\tilde{M})$, endowed with the total variation norm.

The adjoint operator $\Phi^* : \mathcal{M}(\gamma\tilde{M}) \to \mathrm{Lip}_0(M)^*$ is a surjective, non-expansive map. Its action on $\varphi \in \mathrm{Lip}_0(M)$ is given by
$$\langle\varphi, \Phi^*\lambda\rangle = \int_{\gamma\tilde{M}} \Phi\varphi \, d\lambda.$$

For each functional $\varphi^* \in \mathrm{Lip}_0(M)^*$, there exists a measure $\lambda \in \mathcal{M}(\gamma\tilde{M})$ such that $\Phi^*\lambda = \varphi^*$. Such a measure $\lambda$ is called a de Leeuw representation of $\varphi^*$. When $\|\varphi^*\| = \|\lambda\|$, the representation is called optimal. The set of all non-negative optimal de Leeuw representations is denoted by $\mathcal{M}_{op}(\gamma\tilde{M})$.

Following \cite{aliagaconvex}, an element $m \in \mathcal{F}(M)$ is called a convex integral of molecules if there exists a representation $\lambda \in \mathcal{M}_{op}(\gamma\tilde{M})$ that is strictly concentrated on $\tilde{M}$ (that is, $\lambda \in \mathcal{M}_{op}(\tilde{M})$). This implies that the functional is induced by a measure supported strictly on pairs of distinct points.

We will now show that the free integral is a convex integral of molecules. For $f \in L_{\mathcal{F}}^1(\mu,M)$, the pushforward measure associated to $f$ and $\mu$ is the Borel measure $\nu = f\#\mu$ defined for any Borel set $E \in \mathcal{B}(M)$ as $\nu(E) = \mu(f^{-1}(E))$. Because $f$ is Borel measurable (by Proposition 3.3), $\nu$ is well-defined. Furthermore, $\nu$ is finite since $\mu$ is finite, and it holds
$$\int_\Omega \varphi \circ f \, d\mu = \int_M \varphi \, d(f_{\#}\mu).$$
In the following we show that $\lambda$ is a Radon measure. To see that, we recall the following result (\cite[Theorem 7.1.7]{bogachevmeasure}):
\begin{theorem}\label{thm:radonchar}
    Let $M$ be a metric space. If $M$ is complete and separable, then every Borel measure is Radon.
\end{theorem}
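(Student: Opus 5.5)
The statement is the classical Ulam-type theorem: a finite Borel measure $\nu$ on a complete separable metric space $M$ is Radon. That is, it is inner regular with respect to compact sets,
$$\nu(E)=\sup\{\nu(K)\colon K\subseteq E,\ K \text{ compact}\},$$
and outer regular with respect to open sets. The plan has three steps: closed regularity, tightness, and then combining the two.

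\emph{Step 1 (closed regularity).} Let $\mathcal A$ be the family of Borel sets $E$ such that for every $\varepsilon>0$ there are a closed $F$ and an open $U$ with $F\subseteq E\subseteq U$ and $\nu(U\setminus F)<\varepsilon$.
\begin{itemize}
\item Closed sets lie in $\mathcal A$. For closed $F$, the open sets $U_n=\{x\colon d(x,F)<1/n\}$ decrease to $F$, and finiteness of $\nu$ gives $\nu(U_n\setminus F)\to 0$.
\item $\mathcal A$ is closed under complements, by swapping the roles of $F$ and $U$.
\item $\mathcal A$ is closed under countable unions. Given $E_n\in\mathcal A$, choose $F_n\subseteq E_n\subseteq U_n$ with $\nu(U_n\setminus F_n)<\varepsilon 2^{-n}$. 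Take $U=\bigcup_n U_n$, and use a finite union $\bigcup_{n\le N}F_n$ as the closed inner set, chosen with $N$ large via continuity from below.
\end{itemize}
Hence $\mathcal A$ is a $\sigma$-algebra containing the closed sets, so $\mathcal A=\mathcal B(M)$.

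\emph{Step 2 (tightness, the main obstacle).} Here we show that for each $\varepsilon>0$ there is a compact $K$ with $\nu(M\setminus K)<\varepsilon$.
\begin{itemize}
\item By separability fix a dense sequence $\{x_j\}_j$. For each $n$, the closed balls $\overline B(x_j,1/n)$ cover $M$.
\item By continuity from below and finiteness of $\nu$, choose $N_n$ with $\nu\bigl(M\setminus\bigcup_{j\le N_n}\overline B(x_j,1/n)\bigr)<\varepsilon 2^{-n}$.
\item Set $K=\bigcap_n\bigcup_{j\le N_n}\overline B(x_j,1/n)$. Then $K$ is closed, and it is totally bounded since for each $n$ it is covered by finitely many balls of radius $1/n$.
\item By completeness of $M$, the closed subset $K$ is complete, and complete plus totally bounded gives $K$ compact.
\item Finally, $\nu(M\setminus K)\le\sum_n\varepsilon 2^{-n}=\varepsilon$.
\end{itemize}
This is the only place where completeness and separability are used, which is why I expect it to be the crux.

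\emph{Step 3 (combine).} For a Borel set $E$ and $\varepsilon>0$, Step 1 gives a closed $F\subseteq E$ with $\nu(E\setminus F)<\varepsilon$, and Step 2 gives a compact $K$ with $\nu(M\setminus K)<\varepsilon$. Then $F\cap K$ is a compact subset of $E$ with $\nu(E\setminus (F\cap K))<2\varepsilon$, which proves inner regularity by compacts. Outer regularity by open sets was already obtained in Step 1. Local finiteness is automatic because $\nu$ is finite. Together these show that $\nu$ is Radon.
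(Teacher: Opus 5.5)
Your proof is correct. The paper gives no argument of its own here but cites \cite[Theorem 7.1.7]{bogachevmeasure}, and what you wrote is the standard Ulam proof of that result: regularity by closed and open sets via the good-sets $\sigma$-algebra, tightness from a closed totally bounded subset of a complete space, and then intersecting the two.

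You were also right to make finiteness of $\nu$ explicit. The literal ``every Borel measure'' fails without it (counting measure on $\mathbb{R}$ is not finite on $[0,1]$, hence not Radon), and finiteness does hold in the paper's only application, the pushforward $f\#\mu$ of the finite measure $\mu$.
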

\begin{theorem}
    Let $(M,d,\theta)$ be a complete metric space. If $f\colon\Omega\to M$ is a strongly measurable function, then the pushforward measure $\nu=f\#\mu\colon\mathcal B(M)\to \mathbb R$ is Radon.
\end{theorem}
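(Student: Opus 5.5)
The plan is to reduce the statement to Theorem~\ref{thm:radonchar} by showing that $\nu$ is concentrated on a closed separable subset of $M$. Completeness of $M$ is given, but separability is not, and that is the only gap.

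First, since $f$ is strongly measurable, fix a sequence $\{S_k\}_k\subseteq\mathrm{SF}(M)$ and a null set $N\in\Sigma$ such that $S_k(\omega)\to f(\omega)$ for every $\omega\in\Omega\setminus N$. Let $D$ be the countable set of all values taken by all $S_k$, and put $M_0:=\overline{D}$. Then $M_0$ is a closed subset of the complete space $M$, hence complete, and it is separable because $D$ is countable. Moreover $f(\omega)\in M_0$ for every $\omega\notin N$.

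Second, I will relate $\nu$ to a Borel measure on $M_0$. The Borel sets of $M_0$ are exactly the sets $E\cap M_0$ with $E\in\mathcal B(M)$. Since $M_0$ is closed, $M_0\in\mathcal B(M)$, and so $\nu_0(E\cap M_0):=\nu(E\cap M_0)$ defines a finite Borel measure on $M_0$. By Theorem~\ref{thm:radonchar}, $\nu_0$ is Radon on $M_0$. Since $\nu(M\setminus M_0)=\mu(f^{-1}(M\setminus M_0))\le\mu(N)=0$, we get $\nu(E)=\nu_0(E\cap M_0)$ for every Borel $E$.

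There is one caveat in this step. For $\nu$ to be defined we need $f^{-1}(E)\in\Sigma$. This requires Borel measurability of $f$, which Proposition~\ref{prop:measureinclusions} provides only under completeness of $\mu$. I will either assume completeness, as the paper implicitly does when defining $\nu$, or modify $f$ on $N$ (setting $f=\theta$ there, with $\theta$ added to $M_0$). In the latter case $f$ becomes an everywhere limit of simple functions, hence Borel measurable, and the change does not affect the free integral.

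Third, I will transfer regularity from $\nu_0$ to $\nu$. Fix a Borel set $E$ and $\varepsilon>0$. Inner regularity of $\nu_0$ gives a compact $K\subseteq E\cap M_0$ with $\nu_0(E\cap M_0\setminus K)<\varepsilon$. The set $K$ is also compact in $M$, and $\nu(E\setminus K)=\nu_0(E\cap M_0\setminus K)<\varepsilon$, because $E\setminus K$ differs from $E\cap M_0\setminus K$ only by a subset of the null set $M\setminus M_0$. So $\nu$ is inner regular by compacts. Since $\nu$ is finite, outer regularity follows by complements, or directly from the fact that finite Borel measures on metric spaces are always outer regular by open sets. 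Hence $\nu$ is Radon.

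The main obstacle is the third step: checking that tightness on the separable closed piece $M_0$ transfers to $M$ even though $M$ may be non-separable. The key point is that the complement of $M_0$ is $\nu$-null.
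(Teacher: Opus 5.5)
Your proof is correct and follows the same outline as the paper's. Both find a closed separable subset $M_0\subseteq M$ with $\nu(M\setminus M_0)=0$. Being closed in a complete space, $M_0$ is complete, so Theorem~\ref{thm:radonchar} applies to the trace of $\nu$ on $M_0$, and the null complement finishes the argument.

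\textbf{How $M_0$ is obtained.} The paper passes to $\delta\circ f\colon\Omega\to\mathcal F(M)$ and cites the Diestel--Uhl result that strongly measurable vector functions are $\mu$-essentially separably valued. It then pulls separability back to $M$ through the isometry $\delta$, with $M_0=\overline{f(\Omega\setminus N)}$. You take $M_0=\overline{D}$, where $D$ is the countable set of values of the approximating simple functions. This is the argument behind the cited result, carried out directly in $M$, so the detour through $\mathcal F(M)$ and the external reference become unnecessary.

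\textbf{Two points the paper leaves implicit.}
\begin{enumerate}
\item The paper goes from $\nu(E)=\nu|_{M_0}(E\cap M_0)$ straight to ``$\nu$ is Radon''. Your third step supplies the missing check: compact sets in $M_0$ remain compact in $M$, and $\nu(E\setminus K)=\nu_0((E\cap M_0)\setminus K)$.
\item The paper asserts that $\nu$ is well defined by appeal to Proposition~\ref{prop:measureinclusions}(2). That result needs $(\Omega,\Sigma,\mu)$ to be complete, and the statement does not assume this.
\end{enumerate}

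\textbf{Your workaround.} Setting $f=\theta$ on $N$ makes $f$ an everywhere limit of simple functions. It is then Borel measurable without completeness, for instance via Proposition~\ref{prop:measureinclusions}(1) applied to $\varphi\circ f$. This is sound, with one proviso: it proves the Radon property for the pushforward of that modified representative. That is the only meaningful object when the original $f$ is not Borel measurable.
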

\begin{proof}
    First, note that $\delta\circ f\colon\Omega\to \mathcal F(M)$ is a strongly measurable vector function. Then, by \cite[Theorem 2, Chapter II]{diestelmeasures} it holds that $\delta\circ f$ is $\mu$-essentially separably valued in $\mathcal F(M)$. That is, there exist $N\in\Sigma$ such that $\mu(N)=0$ and $\delta\circ f(\Omega\setminus N)$ is separable in $\mathcal F(M)$. Since $\delta$ is an isometry, then $f$ is $\mu$-essentially separably valued in $M$. Indeed, let $\{y_n\}_n\subset \delta\circ f(\Omega\setminus N)$ be dense in $\mathcal F(M)$, and consider $x_n\in M$ such that $y_n=\delta_{x_n}$. Then, for every $x\in f(\Omega\setminus N)$ and $\varepsilon>0$ there exist $n_0\in\mathbb N$ such that
    $\|\delta_x-\delta_{x_{n_0}}\|_\mathcal F=\|\delta_x-y_{n_0}\|_\mathcal F<\varepsilon$. Since $\delta$ is an isometry we deduce $d(x,x_{n_0})<\varepsilon$. From this we conclude that $\{x_n\}_n$ is dense in $f(\Omega\setminus N)$ and hence it is separable.

    Define $M_0:=\overline{f(\Omega\setminus N)}$ and note that $M_0$ is separable and complete. Therefore, $\nu|_{M_0}\colon\mathcal B(M_0)\to\mathbb R$ is Radon by Theorem \ref{thm:radonchar}. Finally, note that for every $E\in\mathcal B(M)$ we have
    $$E=(E\cap M_0)\cup (E\setminus M_0).$$
    Since both measurables sets are disjoint and $\nu(E\setminus M_0)=0$, then
    $\nu(E)=\nu|_{M_0}(E\cap M_0)$. That is, $\nu$ is a Radon measure.
\end{proof}

Let $\nu\in\mathcal M(M)$ be a Radon measure over $M$ with finite first moment. That is, $\int_ M d(x,\theta)d\nu(x)<+\infty$. Then, by \cite[Proposition 4.4]{aliaga2023integral}, the measure $\nu$ induced a functional $\hat\nu\in\mathcal F(M)$ such that $$\langle \hat\nu, \varphi\rangle =\int_M\varphi d\nu, \quad \text{for all} \ \varphi\in\mathrm{Lip}_0(M).$$
If $m\in\mathcal F(M)$ is the induced functional for some Radon measure $\nu$, then $m$ is a convex integral of molecules according to \cite[Theorem 3.1]{aliagaconvex}. Observe that for any $\varphi \in \mathrm{Lip}_0(M)$ yields by Corollary \ref{cor:duality_free_integral}
\begin{equation*}
    \left\langle F-\int_\Omega f d\mu, \varphi \right\rangle = \int_\Omega \varphi \circ f \, d\mu = \int_M \varphi \, d(f\#\mu)
\end{equation*}
Therefore, the following result is obtained.
\begin{corollary}
    Let $f\in L_\mathcal F^1(\mu, M)$ be a free integrable function. Then, its free integral coincides with the functional induced by the Radon measure $\nu=f\#\mu$. Consequently, the free integral is a convex integral of molecules with optimal representation $\lambda\in\mathcal M_{\text{op}}(\tilde M)$.
\end{corollary}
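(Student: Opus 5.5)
The approach is to identify the free integral with the functional $\hat\nu$ induced by the pushforward $\nu=f\#\mu$, and then invoke \cite[Theorem 3.1]{aliagaconvex}. The first step is to make the pushforward legitimate. Since $f$ is strongly measurable, the preceding theorem's argument shows that $f$ is $\mu$-essentially separably valued. I will therefore pass to $M_0:=\overline{f(\Omega\setminus N)}$, or to its completion if $M$ is not complete, using the inclusion-compatibility proposition ($F_{M_0}-\int_E f\,d\mu=F_M-\int_E i\circ f\,d\mu$ with $\mathcal F(M_0)\subseteq\mathcal F(M)$) to transport everything back to $M$. On this separable space $f$ is Borel measurable, so $\nu$ is a finite Borel measure. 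By the preceding theorem (or Theorem \ref{thm:radonchar} applied on the complete separable $M_0$), $\nu$ is Radon.

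Next I check that $\nu$ has finite first moment. The change of variables formula gives
\[
\int_M d(x,\theta)\,d\nu(x)=\int_\Omega d(f(\omega),\theta)\,d\mu(\omega)<\infty,
\]
and the right-hand side is finite by Theorem \ref{thm:freeintchar}. Hence \cite[Proposition 4.4]{aliaga2023integral} produces $\hat\nu\in\mathcal F(M)$ with $\langle\hat\nu,\varphi\rangle=\int_M\varphi\,d\nu$ for all $\varphi\in\mathrm{Lip}_0(M)$.

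Then I identify the two functionals. For every $\varphi\in\mathrm{Lip}_0(M)$, Corollary \ref{cor:duality_free_integral} together with change of variables gives
\[
\left\langle F-\int_\Omega f\,d\mu,\varphi\right\rangle=\int_\Omega\varphi\circ f\,d\mu=\int_M\varphi\,d\nu=\langle\hat\nu,\varphi\rangle.
\]
Since $\mathrm{Lip}_0(M)=\mathcal F(M)^*$ separates points of $\mathcal F(M)$, it follows that $F-\int_\Omega f\,d\mu=\hat\nu$. Finally, \cite[Theorem 3.1]{aliagaconvex} says that every element of $\mathcal F(M)$ induced by a Radon measure with finite first moment is a convex integral of molecules. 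This yields an optimal representation $\lambda\in\mathcal M_{op}(\tilde M)$.

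The main obstacle is the measurability and Radon step when $M$ is neither complete nor separable. Here I would reduce to the essential range, completing it if necessary; the free space of the completion is canonically isometric to $\mathcal F(M_0)$, so the element obtained lives in $\mathcal F(M)$. A second point needs checking: the optimal representation supplied for the completion, or for $M_0$, must still be concentrated on the off-diagonal set of $M$ itself. If this causes trouble, I would take $\nu$ supported on $f(\Omega\setminus N)$ up to null sets and transfer the de Leeuw measure via the inclusion $\tilde M_0\subseteq\tilde M$. I would rely on the hypotheses of \cite[Theorem 3.1]{aliagaconvex} to handle this.
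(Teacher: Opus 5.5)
Your argument is the paper's argument. The paper also gets that $\nu=f\#\mu$ is Radon from the theorem just before the corollary, produces $\hat\nu$ via \cite[Proposition 4.4]{aliaga2023integral}, identifies it with the free integral through Corollary~\ref{cor:duality_free_integral} and the change of variables $\int_\Omega\varphi\circ f\,d\mu=\int_M\varphi\,d\nu$, and concludes with \cite[Theorem 3.1]{aliagaconvex}. Your explicit check, via Theorem~\ref{thm:freeintchar}, that $\nu$ has finite first moment fills a step the paper leaves tacit. For complete $M$ your proof is correct. That is also the only case the paper's argument covers, since the Radon theorem it relies on assumes completeness.

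The piece that does not go through is your extension to non-complete $M$, and appealing to ``the hypotheses of \cite[Theorem 3.1]{aliagaconvex}'' does not settle it. Working on the completion of $M_0$ gives a Radon measure there and an optimal representation concentrated on the off-diagonal of that completion. Nothing in your sketch moves this representation onto $\tilde M$.

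Without completeness, the first assertion (that $\nu$ is a Radon measure on $M$) can fail outright. Take $M\subseteq[0,1]$ containing $\theta=0$ with full outer and zero inner Lebesgue measure. Let $\Omega=M$ carry its Borel sets and the trace of Lebesgue measure, and let $f=\mathrm{id}$. Then $f$ is strongly measurable by Proposition~\ref{prop:separable}, and $d(\theta,f)\le 1$, so $f$ is free integrable. However, every compact subset of $M$ is Lebesgue-null, so $\nu(M)=1$ while $\nu$ vanishes on all compact sets, and $\nu$ is not Radon.

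So either assume $M$ complete, as the paper implicitly does, or give a real argument for the non-complete case; the reduction as sketched is not a proof.
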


The results above show that the free integral naturally produces convex integrals of molecules, and hence elements of $\mathcal F(M)$ that are geometrically relevant in the sense of \cite{aliagaconvex}. We now study the convex structure of $\mathcal F(M)$ to derive properties of the free integral itself.
\begin{theorem}
    Let $f$ be a free integrable function, and let $E \in \Sigma$ be a measurable set such that $\mu(E) > 0$. Suppose that
    $$\mathcal I:=F-\int_Efd\mu$$
    is an extreme point of $B_{\mathcal{F}(M)}$. If $f(\omega)\neq\theta$ for almost every $\omega\in E$, there exist a measurable set $A\subseteq E$ with $\mu(A)>0$ and $p\in M$ such that
    $$f(\omega)=\begin{cases}
        p & \text{if} \ \omega\in A\\
        \theta & \text{if} \ \omega\in E\setminus A\\
    \end{cases}$$
    for almost every $\omega\in E$. Furthermore, $d(p,\theta)=1/\mu(A)$.
\end{theorem}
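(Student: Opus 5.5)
The plan is to combine the geometric result recalled above, that extreme points which are convex integrals of molecules are elementary molecules, with the positivity built into the free integral. We then read off the form of $f$ using well-chosen test functions in the duality formula of Corollary \ref{cor:duality_free_integral}.

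First, set up $\mathcal I$ as a convex integral of molecules.
\begin{itemize}
\item Replace $\mu$ by its restriction $\mu_E(\cdot)=\mu(\cdot\cap E)$, so that $\mathcal I=F-\int_\Omega f\,d\mu_E$.
\item By the preceding Corollary, $\mathcal I$ is the functional induced by the Radon measure $\nu=f\#\mu_E$, and it is a convex integral of molecules.
\item If $M$ is not complete, the Radon property can be obtained inside the completion $\overline M$. Here we use that $\mathcal F(M)$ sits isometrically in $\mathcal F(\overline M)$ (they are actually equal), together with the inclusion Proposition for $M_0\subset M$.
\item Since $\mathcal I$ is an extreme point of $B_{\mathcal F(M)}$, \cite[Theorem 6.1]{aliagaconvex} gives $x\neq y$ in $M$ with $\mathcal I=(\delta_x-\delta_y)/d(x,y)$.
\end{itemize}
This step, namely verifying the hypotheses of \cite[Theorem 6.1]{aliagaconvex} (Radon-ness, completeness, optimality of the representation concentrated on $\tilde M$), is the main obstacle. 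It is where I would spend the most care.

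Second, use positivity to force one endpoint to be $\theta$. For every $\varphi\in\mathrm{Lip}_0(M)$ with $\varphi\ge 0$ we have $\langle\mathcal I,\varphi\rangle=\int_E\varphi\circ f\,d\mu\ge0$.
\begin{itemize}
\item Suppose $y\neq\theta$. Take $r=\tfrac12\min\{d(y,x),d(y,\theta)\}>0$ and $\varphi=(r-d(\cdot,y))^+$. Then $\varphi\in\mathrm{Lip}_0(M)$, $\varphi\ge0$, $\varphi(x)=0$ and $\varphi(y)=r$.
\item This gives $\langle\mathcal I,\varphi\rangle=-r/d(x,y)<0$, a contradiction. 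Hence $y=\theta$.
\item Writing $p:=x\neq\theta$, we get $\mathcal I=\delta_p/d(p,\theta)$.
\end{itemize}

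Third, identify $f$ on $E$.
\begin{itemize}
\item Take $\psi(z)=\min\{d(z,p),d(z,\theta)\}$. It is $1$-Lipschitz, nonnegative, and vanishes at $p$ and $\theta$.
\item Then $\int_E\psi\circ f\,d\mu=\langle\mathcal I,\psi\rangle=\psi(p)/d(p,\theta)=0$. Since $\psi\circ f\ge0$, this forces $f(\omega)\in\{p,\theta\}$ for almost every $\omega\in E$.
\item Set $A:=E\cap f^{-1}(\{p\})$, which is measurable by Borel measurability of $f$. Then $f=p$ on $A$ and $f=\theta$ on $E\setminus A$ almost everywhere.
\end{itemize}
The hypothesis $f\neq\theta$ a.e. on $E$ only says that $E\setminus A$ is null; this is consistent with the displayed form.

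Finally, compute $\mu(A)$. Pair with $\tilde\varphi=d(\theta,\cdot)$, or equivalently use Proposition \ref{prop:freenormdist} together with $\|\mathcal I\|_{\mathcal F}=1$ for extreme points:
$$1=\langle\mathcal I,\tilde\varphi\rangle=\int_E d(\theta,f)\,d\mu=\mu(A)\,d(p,\theta).$$
This shows $\mu(A)>0$ and $d(p,\theta)=1/\mu(A)$.
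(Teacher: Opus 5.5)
Your proof is correct, but it takes a genuinely different route from the paper's. The paper never invokes Theorem~6.1 of \cite{aliagaconvex} or the Radon/de Leeuw machinery. Instead it works directly on the explicit representation $\mathcal I=\int_A m_{f(\omega),\theta}\,d\lambda(\omega)$, where $A=\{\omega\in E: f(\omega)\neq\theta\}$ and $d\lambda=d(f,\theta)\,d\mu$. This $\lambda$ is a probability on $A$ because $\|\mathcal I\|_{\mathcal F}=1$, by Proposition~\ref{prop:freenormdist}. For each $A_0\subseteq A$ with $0<\lambda(A_0)<1$, the paper writes $\mathcal I$ as a convex combination of the normalised partial integrals over $A_0$ and $A\setminus A_0$, both of which lie in $B_{\mathcal F(M)}$. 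Extremality then gives $\int_{A_0}(m_{f(\omega),\theta}-\mathcal I)\,d\lambda=0$ for every $A_0$, hence $m_{f(\omega),\theta}=\mathcal I$ a.e.\ on $A$, and so $f$ is a.e.\ constant there. In effect, the paper proves by hand the special case of Theorem~6.1 for representations concentrated on $M\times\{\theta\}$, where it is elementary. Your argument instead black-boxes that theorem, so it inherits the technical hypotheses you correctly flagged as the main obstacle (Radon-ness of the pushforward, completeness of $M$). All of these are avoidable by arguing directly on the explicit representation. In exchange, your second and third steps are cleaner than the paper's ending. The positivity test function pins down $\mathcal I=\delta_p/d(p,\theta)$, and $\min\{d(\cdot,p),d(\cdot,\theta)\}$ locates the range of $f$ directly. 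This avoids the vector-valued fact that a Bochner integrable function with vanishing integrals over all subsets is a.e.\ zero, and the linear independence needed to pass from $m_{f(\omega),\theta}=\mathcal I$ to $f$ constant. One small point to make explicit: if $M$ is incomplete, Theorem~6.1 applied in $\mathcal F(\overline M)=\mathcal F(M)$ only yields $x,y\in\overline M$. Your third step still gives $f\in\{p,\theta\}$ a.e.\ on $E$, and $\mathcal I\neq 0$ then forces $p\in M$.
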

\begin{proof}
    Define $A=\{\omega\in E\colon d(f(\omega),\theta)>0\}$. When $\mu(A)=0$ we get the trivial case $f=\theta$ $\mu$-a.e. in $E$. Hence, suppose without loss of generality that $\mu(A)>0$.

    Note that $f(\omega)=\theta$ for almost every $\omega\in E\setminus A$. Then,
    $$F-\int_{E\setminus A}fd\mu=0,$$
    and therefore
    $$\mathcal I=\frac{1}{\mu(E)}\left(F-\int_Afd\mu+F-\int_{E\setminus A}fd\mu\right)=\frac{1}{\mu(E)}F-\int_{A}fd\mu.$$
    Consequently,
    \begin{equation}\label{eq:xilambda}
        \mathcal I=\frac{1}{\mu(E)}B-\int_A\delta_{f(\omega)}d\mu(\omega) =B-\int_A\frac{\delta_{f(\omega)}-\delta_\theta}{d(f(\omega),\theta)}\frac{d(f(\omega),\theta)}{\mu(E)}d\mu(\omega).
    \end{equation}
    Define the measure $\lambda$ over $E$ such that $d\lambda(\omega)=d(f(\omega),\theta)d\mu(\omega)/\mu(E)$ and note that it is a probability. Indeed, since $\|\mathcal I\|_\mathcal F=1$ by the extremality of $\mathcal I$, from Proposition \ref{prop:freenormdist} it follows
    \begin{align*}
        1&=\|\mathcal I\|_\mathcal F =\frac{1}{\mu(E)}\left\|F-\int_Efd\mu\right\|_\mathcal F\\
        &=\frac{1}{\mu(E)}\int_Ed(f(\omega),\theta)d\mu(\omega)=\lambda(E).
    \end{align*}
    Therefore, by \eqref{eq:xilambda} we get
    $$\mathcal I=B-\int_A m_{f(\omega),\theta}d\lambda(\omega).$$

    Let $A_0\subseteq A$. We now distinguish three cases:
    \begin{enumerate}[(I)]
        \item If $\lambda(A_0)=0$, then $\int_{A_0}m_{f(\omega),\theta}d\lambda(\omega)=0=\int_{A_0}\mathcal I d\lambda$.
        \item If $\lambda(A_0)=1$, then
        $$\int_{A_0}m_{f(\omega),\theta}d\lambda(\omega)=\int_Am_{f(\omega),\theta}d\lambda(\omega)=\mathcal I=\int_{A_0}\mathcal I d\lambda.$$
        \item Suppose $0<\lambda(A_0)<1$. Naming $\tilde {A_0}=A\setminus A_0$ we have $0<\lambda(\tilde {A_0})<1$. Define
        $$\mathcal I_1:=\frac{1}{\lambda(A_0)}\int_{A_0}m_{f(\omega),\theta}d\lambda(\omega), \qquad \mathcal I_2:=\frac{1}{\lambda(\tilde {A_0})}\int_{\tilde {A_0}}m_{f(\omega),\theta}d\lambda(\omega),$$
        and note that $\mathcal I_1,\mathcal I_2\in B_{\mathcal F(M)}$ since $\|m_{x,y}\|_\mathcal F\le 1$ for all $x,y\in M$. Furthermore, since
        $$\mathcal I=\lambda(A_0)\mathcal I_1+(1-\lambda(\tilde {A_0}))\mathcal I_2$$
        and $\mathcal I$ is a extremal point of $B_{\mathcal F(M)}$, it follows $\mathcal I=\mathcal I_1$. Therefore, $$\int_{A_0}m_{f(\omega),\theta}d\lambda(\omega)=\lambda(A_0)\mathcal I=\int_{A_0}\mathcal I d\lambda$$
    \end{enumerate}
    From all three cases we deduce $\int_{A_0}(m_{f(\omega),\theta}-\mathcal I) d\lambda(\omega)=0$ for all $A_0\subseteq A$. Consequently, $m_{f(\omega),\theta}=\mathcal I$ for almost every $\omega\in A$, and therefore $f$ must be constant in $A$ $\mu$-a.e. Let $p\in M$ be this constant value.

    Finally, observe that
    $$1=\|\mathcal I\|_\mathcal F=\mu(A)\|\delta_p\|_\mathcal F=\mu(A)d(p,\theta)$$
\end{proof}

Note that the condition for a free integral to be an extreme point of $B_{\mathcal{F}(M)}$ is restrictive. A natural question is then to determine where the free integral lies within $\mathcal{F}(M)$. The following result provides an answer: the normalised free integral over any measurable set of positive measure belongs to the closed convex hull of the evaluation functionals along the integrand.
\begin{theorem}\label{thm:convexint}
    Let $f\colon\Omega\to M$ be a free integrable function. For any measurable set $E\in\Sigma$ such that $\mu(E)>0$ it holds
    $$\frac{1}{\mu(E)}\int_Efd\mu\in\overline{\mathrm{conv}}\left\{\delta_{f(\omega)}\colon w\in E\right\}$$
    for any $\mu$-a.e. representative $f$.
\end{theorem}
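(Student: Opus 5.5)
The plan is to argue by Hahn--Banach separation in the Banach space $\mathcal F(M)$, using the duality formula of Corollary~\ref{cor:duality_free_integral}. Set $C:=\overline{\mathrm{conv}}\{\delta_{f(\omega)}\colon \omega\in E\}$, which is a nonempty closed convex subset of $\mathcal F(M)$, and let $\mathcal I:=\frac{1}{\mu(E)}F\text{-}\int_E f\,d\mu$. Suppose, towards a contradiction, that $\mathcal I\notin C$.

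By the separation theorem, together with the identification $\mathcal F(M)^*\cong\mathrm{Lip}_0(M)$, there exist $\varphi\in\mathrm{Lip}_0(M)$ and $c\in\mathbb R$ such that
$$\langle \mathcal I,\varphi\rangle > c \ge \sup_{m\in C}\langle m,\varphi\rangle.$$
In particular, $\varphi(f(\omega))=\langle\delta_{f(\omega)},\varphi\rangle\le c$ for every $\omega\in E$. On the other hand, Corollary~\ref{cor:duality_free_integral} gives
$$\langle\mathcal I,\varphi\rangle=\frac{1}{\mu(E)}\int_E\varphi\circ f\,d\mu.$$
Here $\varphi\circ f$ is integrable, since $|\varphi\circ f|\le\|\varphi\|_{Lip}\,d(\theta,f)$ and Theorem~\ref{thm:freeintchar} applies. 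The pointwise bound $\varphi\circ f\le c$ on $E$ then yields $\langle\mathcal I,\varphi\rangle\le c$, which is the desired contradiction.

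The phrase ``for any $\mu$-a.e.\ representative'' needs a remark. If $g$ is another representative with $g=f$ a.e., then the two functions have the same free integral. For $g$, the separating functional still satisfies $\varphi\circ g\le c$ on all of $E$, because the bound is checked pointwise on the representative actually used. Null sets therefore never enter the argument, and the inclusion holds for every representative simultaneously.

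I expect the main obstacle to be ensuring that the duality pairing is the genuine one. Strict separation must take place in the norm dual of $\mathcal F(M)$, and it is the identification $\mathcal F(M)^*=\mathrm{Lip}_0(M)$ that allows the separating functional to be evaluated through Corollary~\ref{cor:duality_free_integral}. I also need to check measurability of $\varphi\circ f$ restricted to $E$, which follows from strong measurability as in Theorem~\ref{thm:freeintchar}.

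As an alternative route, one could approximate $f$ by simple functions whose values lie in $f(E)$, for which the statement is immediate since $\frac{1}{\mu(E)}\sum_i\mu(A_i\cap E)\,\delta_{x_i}$ is a convex combination, and then pass to the limit by Proposition~\ref{prop:convergencefree}. However, arranging for the approximants to take values in $f(E)$ is awkward, so the separation argument is preferable.
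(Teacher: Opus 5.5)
Your proposal is correct and follows essentially the same argument as the paper. Both strictly separate the normalised integral from $\overline{\mathrm{conv}}\{\delta_{f(\omega)}\colon\omega\in E\}$ by Hahn--Banach, identify the separating functional with some $\varphi\in\mathrm{Lip}_0(M)$, use the pointwise bound $\varphi\circ f\le c$ on $E$, and integrate via the duality formula; the paper cites Theorem~\ref{thm:int_dual} directly rather than Corollary~\ref{cor:duality_free_integral}, which amounts to the same thing.
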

\begin{proof}
    Name $K:=\overline{\mathrm{conv}}\left\{\delta_{f(\omega)}\colon \omega\in E\right\}$, and name $U:=\{\xi\}$ where $$\xi:=\frac{1}{\mu(E)}\int_Efd\mu.$$
    Suppose that $\xi\notin K$, and hence $U\cap K=\emptyset$. Note that $K$ is closed and convex while $U$ is convex and compact. By the Hahn-Banach Theorem \cite[Theorem 1.7]{brezisfunctional}, there exist a closed hyperplane $H$ in $\mathcal F(M)$ that strictly separates $K$ and $U$. That is, there exist $\alpha\in\mathbb R$ and $T\in\mathcal F(M)^*$ such that $$H=\{m\in\mathcal F(M)\colon \langle m, T\rangle=\alpha\},$$
    and
\begin{equation}\label{eq:separation}
        \langle \xi,T \rangle>\alpha, \quad \langle\eta,T\rangle\le\alpha \ \ \text{for all} \ \eta\in K.
    \end{equation}
    Since $\mathcal F(M)^*\cong\mathrm{Lip}_0(M)$, there exist $\varphi\in\mathrm{Lip}_0(M)$ such that $T\circ \delta=\varphi$.
    
    For each $\omega\in\Omega$, note that $\delta_{f(\omega)}\in K$. Consequently, $$\varphi\circ f(\omega)=\langle\delta_{f(\omega)}, T\rangle\le\alpha.$$ Integrating in both sides we get $B-\int_E \varphi\circ fd\mu\leq\alpha\mu(E)$.
    By Theorem \ref{thm:int_dual} we have that $T\left(F\ -\int_Efd\mu\right)\le\alpha\mu(E)$. From this we deduce $\langle\xi,T\rangle\le\alpha$, which is a contradiction with \eqref{eq:separation}. Then, we conclude $\xi\in K$ finishing the proof.
\end{proof}

\begin{proposition}\label{prop:averageextreme}
    Let $f\colon\Omega\to M$ be a free integrable function, and let $E\in\Sigma$ such that $\mu(E)>0$. If
    $$\xi:=\frac{1}{\mu(E)}\int_Efd\mu$$
    is a extreme point of $K:=\overline{\operatorname{conv}}\{\delta_{f(\omega)}\colon\omega\in E\}$, then $f$ is constant in $E$ $\mu$-a.e.
\end{proposition}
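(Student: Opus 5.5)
The plan is to mimic the splitting argument used in the previous extremality theorem, now with $K$ in place of $B_{\mathcal F(M)}$, and to use Theorem~\ref{thm:convexint} to guarantee that partial averages lie in $K$. Fix a $\mu$-a.e.\ representative $f$. For any measurable $A_0\subseteq E$ with $0<\mu(A_0)<\mu(E)$, put $A_1:=E\setminus A_0$ and define
$$\xi_0:=\frac{1}{\mu(A_0)}\int_{A_0}f\,d\mu,\qquad \xi_1:=\frac{1}{\mu(A_1)}\int_{A_1}f\,d\mu .$$
By Theorem~\ref{thm:convexint} applied on $A_0$ and on $A_1$, we get $\xi_i\in\overline{\operatorname{conv}}\{\delta_{f(\omega)}:\omega\in A_i\}\subseteq K$. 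Additivity of the free integral in the set variable, which follows from Proposition~\ref{prop:freebochner} and additivity of the Bochner integral, gives
$$\xi=\frac{\mu(A_0)}{\mu(E)}\,\xi_0+\frac{\mu(A_1)}{\mu(E)}\,\xi_1 .$$
This is a proper convex combination of two points of $K$, so extremality of $\xi$ forces $\xi_0=\xi_1=\xi$. Hence $\int_{A_0}f\,d\mu=\mu(A_0)\,\xi$ for every measurable $A_0\subseteq E$. The cases $\mu(A_0)=0$ and $\mu(A_0)=\mu(E)$ are trivial.

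Next I pass to the Bochner picture. By Proposition~\ref{prop:freebochner}, the last identity reads
$$B\text{-}\int_{A_0}(\delta_{f}-\xi)\,d\mu=0\qquad\text{for all measurable } A_0\subseteq E.$$
The standard fact that a Bochner integrable function with vanishing integral over every measurable subset is zero a.e.\ then gives $\delta_{f(\omega)}=\xi$ for $\mu$-a.e.\ $\omega\in E$. If this fact needs to be justified directly, I would pair with $\varphi\in\mathrm{Lip}_0(M)$ via Corollary~\ref{cor:duality_free_integral}: we get $\int_{A_0}(\varphi\circ f-\langle\xi,\varphi\rangle)\,d\mu=0$ for all $A_0$, so $\varphi\circ f=\langle\xi,\varphi\rangle$ a.e. Because $\varphi$ ranges over an uncountable family, I would reduce to countably many functionals using the $\mu$-essential separability of $\delta\circ f$ shown in the proof of the Radon theorem. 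Alternatively, I would cite the Bochner fact (e.g.\ \cite[Chapter II, Corollary 5]{diestelmeasures}).

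Finally, $\delta_{f(\omega)}=\xi$ a.e.\ means that all these values of $\delta\circ f$ coincide. Since $\delta$ is injective, being an isometric embedding, $f(\omega)=p$ for a single $p\in M$ for $\mu$-a.e.\ $\omega\in E$, which is the claim.

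The step I expect to need the most care is the passage from ``equal integrals on all subsets'' to the a.e.\ equality. Citing the vector-valued result for Bochner integrable functions, which uses essential separable valuedness, settles it cleanly. Everything else follows by the same convexity splitting as in the previous theorem.
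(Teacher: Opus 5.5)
Your proof is correct and follows the paper's argument. You split $E$ into $C$ and $E\setminus C$, use Theorem~\ref{thm:convexint} to place both partial averages in $K$, and invoke extremality to get $\frac{1}{\mu(C)}\int_C f\,d\mu=\xi$ for every $C\subseteq E$ of positive measure; you then conclude $\delta_f=\xi$ a.e.\ on $E$, so $f$ is constant by injectivity of $\delta$. The paper phrases this as a contradiction argument, and your explicit appeal to the uniqueness theorem for Bochner indefinite integrals fills in the a.e.\ step that the paper simply asserts.
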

\begin{proof}
    We first claim that if for all measurable $C\subseteq E$ with $\mu(C)>0$ it holds
    \begin{equation}\label{eq:extconstant}
        \xi=\frac{1}{\mu(C)}\int_Cfd\mu,
    \end{equation}
    then $f$ is constant $\mu$-a.e. Indeed, for any such a $C$, by \eqref{eq:extconstant} we get $0=B-\int_C(\delta_f-\xi)d\mu$. Therefore, $\delta_f=\xi$ $\mu$-a.e. Since $\delta$ is an isometry and thus injective it follows that $f$ is constant $\mu$-a.e.

    Suppose that $f$ is not constant $\mu$-a.e. in $E$. In this case, there exist $C \in\Sigma$ subset of $E$ with $\mu(C)>0$ such that
    $$\xi_C:=\frac{1}{\mu(C)}\int_Cfd\mu\neq\xi.$$

    If $\mu(E\setminus C)>0$ we can write
    $$\xi=\frac{\mu(C)}{\mu(E)}\xi_C+\frac{\mu(E\setminus C)}{\mu(E)}\xi_{E\setminus C}.$$
    By Theorem \ref{thm:convexint} it holds $\xi_C,\xi_{E\setminus C}\in K$.
    Since $\xi$ is an extreme point it follows $\xi_C=\xi$, which is a contradiction.

    On the other hand, suppose that $\mu(E\setminus C)=0$. Then, $\mu(C)=\mu(E)$ and
    $$\xi_C:=\frac{1}{\mu(C)}\int_Cfd\mu=\frac{1}{\mu(E)}\int_Efd\mu=\xi.$$
    Since this is also a contradiction, we conclude that $f$ is constant in $E$ $\mu$-a.e.
\end{proof}

Finally, although every free integral is a convex integral of molecules,
the converse is generally false. The following result shows that, even
in the simplest nontrivial setting, there exist elementary molecules
that cannot arise as free integrals.

\begin{proposition}\label{prop:notfreeint}
    Let $(M,d,\theta)$ be a pointed metric space containing at least
    two points different from $\theta$. Then, there exist convex integrals
    of molecules in $\mathcal{F}(M)$ that are not the free integral of any $f\in L^1_{\mathcal{F}}(\mu,M)$ over any $E\in\Sigma$.
\end{proposition}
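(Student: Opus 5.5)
Take two distinct points $x,y\in M\setminus\{\theta\}$ and consider the elementary molecule
$$m_{xy}=\frac{\delta_x-\delta_y}{d(x,y)}.$$
An elementary molecule is trivially a convex integral of molecules: the Dirac measure at $(x,y)\in\tilde M$ is a non-negative de Leeuw representation of norm $1=\|m_{xy}\|_{\mathcal F}$, so it is optimal and concentrated on $\tilde M$. It remains to show that $m_{xy}$ is not of the form $F-\int_E f\,d\mu$ for any $f\in L^1_{\mathcal F}(\mu,M)$ and $E\in\Sigma$.

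The key observation is that free integrals are \emph{positive} functionals on the cone of non-negative Lipschitz functions. By Corollary~\ref{cor:duality_free_integral}, for every $\varphi\in\mathrm{Lip}_0(M)$ with $\varphi\ge 0$ we have
$$\left\langle F-\int_E f\,d\mu,\varphi\right\rangle=\int_E\varphi\circ f\,d\mu\ \ge 0,$$
since $\mu$ is a non-negative measure. So it suffices to exhibit a non-negative $\varphi\in\mathrm{Lip}_0(M)$ with $\langle m_{xy},\varphi\rangle<0$.

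Take
$$\varphi(z)=\max\{0,\;r-d(z,y)\},\qquad r:=\min\{d(y,\theta),d(x,y)\}>0.$$
Then $\varphi$ is $1$-Lipschitz and non-negative. Moreover $\varphi(\theta)=0$ and $\varphi(x)=0$, because $d(\theta,y)\ge r$ and $d(x,y)\ge r$, while $\varphi(y)=r>0$. Hence
$$\langle m_{xy},\varphi\rangle=\frac{\varphi(x)-\varphi(y)}{d(x,y)}=-\frac{r}{d(x,y)}<0,$$
which contradicts the positivity above. Therefore $m_{xy}$ is not a free integral over any $E$.

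The only delicate point is the use of $y\neq\theta$, which is needed to make $\varphi(\theta)=0$. The hypothesis that $x$ and $y$ are both different from $\theta$ is exactly what is required. Indeed, if $x=\theta$ then $m_{\theta y}$ could be a free integral, e.g.\ $m_{y\theta}=\delta_y/d(y,\theta)$ is the free integral of the constant function $y$ over a set of measure $1/d(y,\theta)$. This step is where I expect any care is needed, namely in choosing a bump function that vanishes at both $x$ and $\theta$ but not at $y$.
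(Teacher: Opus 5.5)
Your proof is correct, but it argues differently from the paper's proof of this proposition.

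The paper tests $m_{xy}$ only against the canonical function $\tilde\varphi=d(\theta,\cdot)$:
\begin{itemize}
\item if $d(\theta,x)<d(\theta,y)$, the pairing is negative, contradicting $\int_E d(\theta,f)\,d\mu\ge 0$;
\item if $d(\theta,x)>d(\theta,y)$, it passes to $m_{yx}$;
\item if $d(\theta,x)=d(\theta,y)$, it invokes Proposition~\ref{prop:freenormdist} to get $1=\|m_{xy}\|_{\mathcal F}=\int_E d(\theta,f)\,d\mu=\langle m_{xy},\tilde\varphi\rangle=0$.
\end{itemize}
That route needs no auxiliary construction. However, it requires a case split and only shows that one of $m_{xy}$, $m_{yx}$ is not a free integral. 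For $M=\mathbb R$, $\theta=0$, $y=1$, $x=2$, neither the sign of $\langle m_{xy},\tilde\varphi\rangle$ nor the norm identity gives a contradiction for $m_{xy}$ itself, since $\langle m_{xy},\tilde\varphi\rangle=1=\|m_{xy}\|_{\mathcal F}$.

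Your bump function centred at $y$ isolates the actual obstruction: positivity of free integrals. This is the same idea the paper uses afterwards in Remark~\ref{rem:positivity}. It gives the stronger, uniform statement that no $m_{xy}$ with $y\neq\theta$ is a free integral, with no cases and no need for Proposition~\ref{prop:freenormdist}. Choosing $r=d(y,\theta)$ would already suffice, since $\varphi(x)<\varphi(y)$ regardless.

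One correction to your closing remark. Your argument never uses $x\neq\theta$, and $m_{\theta y}=-\delta_y/d(\theta,y)$ is plainly not a free integral. The genuine exception is when the \emph{second} point equals $\theta$, i.e.\ $m_{y\theta}$, as your own example shows (provided $\Sigma$ contains a set of measure $1/d(y,\theta)$). So the hypothesis is not ``exactly what is required'': a single point $y\neq\theta$ already suffices for the proposition.
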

\begin{proof}
    Let $x,y\in M$ with $x\neq y$ and $x,y\neq\theta$. We claim that the
    elementary molecule $m_{xy}$ is not a free integral. Note that $m_{xy}$
    is trivially a convex integral of molecules, since it corresponds to the
    discrete de Leeuw representation $\delta_{(x,y)}$.

    Suppose, for the sake of contradiction, that there exist a measurable set $E\in\Sigma$ with $\mu(E)>0$, and a function $f\in L^1_{\mathcal{F}}(\mu,M)$
    such that
    \begin{equation}\label{eq:mxyfreeint}
        m_{xy} = F-\int_E f\,d\mu.
    \end{equation}
    Consider the function
    $\tilde{\varphi}(\cdot):=d(\theta,\cdot)\in\mathrm{Lip}_0(M)$,
    which satisfies $\|\tilde\varphi\|_{\mathrm{Lip}}=1$. On the one
    hand, by Theorem \ref{cor:duality_free_integral} applied to $\tilde\varphi$ and
    equation~\eqref{eq:mxyfreeint}, we obtain
    \begin{equation}\label{eq:freeintnn}
        \left\langle m_{xy},\,\tilde{\varphi}\right\rangle
        = \left\langle F\text{-}\int_E f\,d\mu,\,\tilde{\varphi}
        \right\rangle
        = \int_E \tilde{\varphi}\circ f\,d\mu
        = \int_E d(\theta,f)\,d\mu \geq 0,
    \end{equation}
    since $d(\theta,f(\omega))\geq 0$ for all $\omega\in\Omega$ and
    $\mu$ is a positive measure. On the other hand, a direct computation
    gives
    \begin{equation}\label{eq:mxysign}
        \langle m_{xy},\,\tilde{\varphi}\rangle
        = \frac{d(\theta,x)-d(\theta,y)}{d(x,y)}.
    \end{equation}
    We distinguish two cases.
    \begin{enumerate}[(i)]
        \item Suppose $d(\theta,x)\neq d(\theta,y)$. Without loss of
        generality, assume $d(\theta,x)<d(\theta,y)$ (otherwise, the same argument can be applied to $m_{yx}$).
        Then \eqref{eq:mxysign} yields
        $\langle m_{xy},\tilde{\varphi}\rangle<0$, which contradicts
        \eqref{eq:freeintnn}.

        \item Suppose $d(\theta,x)=d(\theta,y)$. Then
        $\langle m_{xy},\tilde\varphi\rangle=0$ by \eqref{eq:mxysign}.
        However, combining \eqref{eq:mxyfreeint} with
        Proposition \ref{prop:freenormdist} yields
        \[
            1 = \|m_{xy}\|_{\mathcal{F}}
            = \left\|F\text{-}\int_E f\,d\mu\right\|_{\mathcal{F}}
            = \int_E d(\theta,f)\,d\mu,
        \]
        Note that the right-hand side equals
        $\langle m_{xy},\tilde\varphi\rangle$ by
        \eqref{eq:freeintnn}, which is a contradiction.
    \end{enumerate}
    Since both cases lead to a contradiction, $m_{xy}$ cannot be
    represented as a free integral.
\end{proof}

\begin{remark}\label{rem:positivity}
    The results above reveal the following constraint on the free
    integral: it always produces positive elements of
    $\mathcal{F}(M)$. Recall that an element $m\in\mathcal{F}(M)$ is said to be
    positive if $\langle m,\varphi\rangle\geq 0$ for every
    $\varphi\in\mathrm{Lip}_0(M)$ satisfying $\varphi\geq 0$
    pointwise. Indeed, for any $f\in L^1_{\mathcal{F}}(\mu,M)$, any
    $E\in\Sigma$, and any non-negative
    $\varphi\in\mathrm{Lip}_0(M)$, Corollary \ref{cor:duality_free_integral} gives
    \[
        \left\langle F\text{-}\int_E f\,d\mu,\,\varphi\right\rangle
        = \int_E \varphi\circ f\,d\mu \geq 0,
    \]
    since $\varphi(f(\omega))\geq 0$ for all $\omega\in\Omega$ and
    $\mu$ is a non-negative measure. This non-negativity is intrinsic to the
    construction: the coefficients $\mu(A_i\cap E)$ appearing in
    Definition \ref{def:sfintegral} are always non-negative, and this property is
    preserved in the limit defining the integral of strongly measurable
    functions.

    On the other hand, the only positive elementary molecules are those
    of the form $m_{p\theta}=\delta_p/d(p,\theta)$ for
    $p\neq\theta$. Indeed, $m_{p\theta}$ is positive since
    $\langle m_{p\theta},\varphi\rangle = \varphi(p)/d(p,\theta)\geq 0$
    for all non-negative $\varphi\in\mathrm{Lip}_0(M)$. Conversely,
    for any elementary molecule $m_{xy}$ with $x\neq y$ and $y\neq\theta$, consider the function
    $\tilde{\psi}(\cdot):=\max\{d(\theta,y)-d(\cdot,y),\,0\}$. Observe that $\tilde{\psi}\in\mathrm{Lip}_0(M)$, and it is non-negative. Furthermore, a direct computation shows $\tilde{\psi}(y)=d(\theta,y)>0$, and $\tilde\psi(x) < d(\theta,y)$. Hence,
    \[
        \langle m_{xy},\tilde{\psi}\rangle
        = \frac{\tilde{\psi}(x)-d(\theta,y)}{d(x,y)} < 0.
    \]
    Consequently, the free integral can only reproduce elementary molecules of the
    form $m_{p\theta}$.
\end{remark}

\section{A detailed example}\label{sec:example}
Examples of genuine metric spaces that cannot be endowed with a linear structure are provided by metric trees. A finite metric tree is a finite metric space $M=\{\theta,x_1,\ldots,x_n\}$ such that
for every pair $a,b\in M$ there exists a unique sequence of distinct points $a=y_0,y_1,\ldots,y_p=b$ in $M$ satisfying
\[
    d(a,b)=\sum_{k=0}^{p-1}d(y_k,y_{k+1}).
\]
Such a sequence is called the geodesic from $a$ to $b$. The set of edges $\mathcal{E}\subset M\times M$ consists of
all pairs $(a,b)$ such that the geodesic from $a$ to $b$ contains no intermediate points. A sequence of edges is called a path.

For example, consider $M_0=\{1,2,3\}\times\{1,2,3\}$ equipped with the river metric. In this metric, the first column
$\{(i,1)\colon i=1,2,3\}$ acts as a ``river": any path between
points in different rows must pass through it. Formally,
\[
    d\bigl((i,j),(i',j')\bigr)=
    \begin{cases}
        (j-1)+|i-i'|+(j'-1) & \text{if } i\neq i',\\
        |j-j'| & \text{if } i=i'.
    \end{cases}
\]
Neither the Euclidean metric nor the Manhattan distance on this grid produce a metric tree, but the river metric does: the unique geodesic between points in different rows is forced through the river, preventing cycles. We take $\theta=(3,1)$ as the base point and denote this pointed metric space by $(M,d,\theta)$. Once a base point $\theta$ is fixed, the metric tree can be naturally viewed as a directed graph by assigning an orientation to its edges. In particular, we adopt the convention of orienting edges toward $\theta$. In Figure \ref{fig:metrictreegrapgh} we provide an ilustrative representation of the metric tree $M_0$.

\begin{figure}
    \centering
\begin{tikzpicture}[
    >={Stealth[scale=1.2]}, 
    node distance=2.5cm, 
    thick,
    vertex/.style={circle, draw, fill=blue!5, minimum size=1cm, font=\small},
    sink/.style={circle, draw=red, fill=red!10, ultra thick, minimum size=1cm, font=\small}
]
    \foreach \i in {1,2,3} {
        \foreach \j in {1,2,3} {
            \node[vertex] (n\i\j) at (\j*3, -\i*3) {(\i,\j)};
        }
    }

    \node[sink] at (n31) {(3,1)};

    \draw[->, ultra thick, blue!80!black] (n11) -- node[left, black] {$e_2$} (n21);
    \draw[->, ultra thick, blue!80!black] (n21) -- node[left, black] {$e_1$} (n31);

    \draw[->] (n13) -- node[above] {$e_4$} (n12);
    \draw[->] (n12) -- node[above] {$e_3$} (n11);

    \draw[->] (n23) -- node[above] {$e_6$} (n22);
    \draw[->] (n22) -- node[above] {$e_5$} (n21);

    \draw[->] (n33) -- node[above] {$e_8$} (n32);
    \draw[->] (n32) -- node[above] {$e_7$} (n31);
\end{tikzpicture}
    \caption{Representation of the directed graph provided by the metric tree $M_0$. The direction of the edges is chosen to fix a sign convetion when identifying with $l_1^8$.}
    \label{fig:metrictreegrapgh}
\end{figure}

We also endow $\Omega = M_0$ with the uniform probability measure on two points:
\[
    \mu(\{(i,j)\})=
    \begin{cases}
        \tfrac{1}{2} & \text{if }
        (i,j)\in\{(1,2),(2,3)\},\\
        0 & \text{otherwise},
    \end{cases}
\]
and denote the resulting probability space by
$(\Omega,2^{M_0},\mu)$. 

Consider the functions
$f,g\colon\Omega\to M$ given by $f(\omega)=\omega$ (the identity)
and $g$ defined as $g(i,j)=(j,4-i)$. The images of the points with non-zero probability are:
\[
    f(1,2)=(1,2),\quad f(2,3)=(2,3),\qquad
    g(1,2)=(2,3),\quad g(2,3)=(3,2).
\]

The Fréchet integral of $f$ is the minimiser of
\[
    \Phi_f(y):=\int_\Omega d^2(f(\omega),y)\,d\mu(\omega)
    =\tfrac{1}{2}\,d^2((1,2),y)+\tfrac{1}{2}\,d^2((2,3),y)
\]
over $y\in M$. Evaluating at each point of $M$ we get
\[
\renewcommand{\arraystretch}{1.1}
\begin{array}{c|ccccccccc}
    y & \scriptstyle(1,1) & \scriptstyle(1,2)
    & \scriptstyle(1,3) & \scriptstyle(2,1)
    & \scriptstyle(2,2) & \scriptstyle(2,3)
    & \scriptstyle(3,1) & \scriptstyle(3,2)
    & \scriptstyle(3,3) \\ \hline
    \Phi_f(y) & 5 & 8 & 13 & 4 & 5 & 8 & 9 & 16 & 25
\end{array}
\]
The minimum is attained uniquely at $y=(2,1)$ with
$\Phi_f((2,1))=4$. Similarly, for $g$ we compute
\[
    \Phi_g(y)
    =\tfrac{1}{2}\,d^2((2,3),y)+\tfrac{1}{2}\,d^2((3,2),y),
\]
which gives
\[
\renewcommand{\arraystretch}{1.1}
\begin{array}{c|ccccccccc}
    y & \scriptstyle(1,1) & \scriptstyle(1,2)
    & \scriptstyle(1,3) & \scriptstyle(2,1)
    & \scriptstyle(2,2) & \scriptstyle(2,3)
    & \scriptstyle(3,1) & \scriptstyle(3,2)
    & \scriptstyle(3,3) \\ \hline
    \Phi_g(y) & 9 & 16 & 25 & 4 & 5 & 8 & 5 & 8 & 13
\end{array}
\]
Again, the minimum is attained uniquely at $y=(2,1)$ with
$\Phi_g((2,1))=4$. Therefore, the Fréchet integral of $f$ and $g$ coincide, both equal to $(2,1)$, with the same optimal cost.

On the other hand, by Definition \ref{def:sfintegral},
\[
    F-\int_\Omega f\,d\mu
    = \tfrac{1}{2}\,\delta_{(1,2)}
    +\tfrac{1}{2}\,\delta_{(2,3)},
    \qquad
    F-\int_\Omega g\,d\mu
    = \tfrac{1}{2}\,\delta_{(2,3)}
    +\tfrac{1}{2}\,\delta_{(3,2)}.
\]
These are distinct elements of $\mathcal{F}(M)$. Indeed, consider $\varphi(i,j):=i-3\in\mathrm{Lip}_0(M)$, which
satisfies $\varphi(\theta)=\varphi(3,1)=0$ and
$\|\varphi\|_{\mathrm{Lip}}=1$. By
Corollary~\ref{cor:duality_free_integral},
\[
    \left\langle F-\int_\Omega f\,d\mu,\,
    \varphi\right\rangle
    = \tfrac{1}{2}(1-3)+\tfrac{1}{2}(2-3)
    = -\tfrac{3}{2},
\]
while
\[
    \left\langle F-\int_\Omega g\,d\mu,\,
    \varphi\right\rangle
    = \tfrac{1}{2}(2-3)+\tfrac{1}{2}(3-3)
    = -\tfrac{1}{2}.
\]
Moreover, the norms of the free integrals also differ. By
Proposition \ref{prop:freenormdist},
\[
    \left\|F-\int_\Omega f\,d\mu\right\|_{\mathcal{F}}
    = \tfrac{1}{2}\,d(\theta,(1,2))+\tfrac{1}{2}\,d(\theta,(2,3))
    = \tfrac{1}{2}\cdot 3+\tfrac{1}{2}\cdot 3=3,
\]
while
\[
    \left\|F-\int_\Omega g\,d\mu\right\|_{\mathcal{F}}
    = \tfrac{1}{2}\,d(\theta,(2,3))+\tfrac{1}{2}\,d(\theta,(3,2))
    = \tfrac{1}{2}\cdot 3+\tfrac{1}{2}\cdot 1=2.
\]
This example illustrates that, while the Fréchet integral cannot distinguish between two distinct functions, the free integral captures their distributional differences as
distinct elements of $\mathcal{F}(M)$.

To further explore this example, we provide a description of the free space of our metric space $M_0$. For this, we refer to \cite{godard2010tree,weaverlipschitz}. If $T$ is a finite metric tree, then $\mathcal F(T)$ is linearly isometric to $l_1^N$ for any $\theta\in T$, where $N=\operatorname{card}(T)-1$. Moreover, each edge $e_i$ can be identifyed with an element $b_i$ of the canonical basis of $l_1^N$. Furthermore, if the directed edge $e_i$ connects $u$ to a subsequent node $v$ (closer to $\theta$), we establish the following identification:
$$b_i := \delta_u - \delta_v.$$
Since $\delta_\theta = 0_{\mathcal{F}(M_0)}$, we can systematically deduce the evaluation functional $\delta_x$ for any point $x \in T$ going through the unique geodesic from $x$ to $\theta$. 

In our case, we can identify $\mathcal F(M_0)$ with $l_1^8$. We can identify each molecule $\delta_{(i,j)}$ with a vector of $l_1^8$ in the following way:
\begin{itemize}
    \item From $e_1$, we have $b_1 = \delta_{(2,1)} - \delta_{(3,1)}=\delta_{(2,1)}$, hence $\delta_{(2,1)} = b_1$.
    \item From $e_2$, we have $b_2 = \delta_{(1,1)} - \delta_{(2,1)}$, hence $\delta_{(1,1)} = b_1 + b_2$.
    \item From $e_3$, we have $b_3 = \delta_{(1,2)} - \delta_{(1,1)}$ hence $\delta_{(1,2)} = b_1 + b_2 + b_3$.
\end{itemize}

Consequently,
$$F-\int_\Omega fd\mu=\frac{1}{2}(b_1+b_2+b_3)+\frac{1}{2}(b_1+b_5+b_6)=(1,\tfrac{1}{2},\tfrac{1}{2},0,\tfrac{1}{2},\tfrac{1}{2},0,0),$$
and
$$F-\int_\Omega gd\mu=\frac{1}{2}(b_1+b_5+b_6)+\frac{1}{2}(b_7)=(\tfrac{1}{2},0,0,0,\tfrac{1}{2},\tfrac{1}{2},\tfrac{1}{2},0).$$

This ability to capture spatial distributions makes the free integral a powerful tool for applied fields. In image recognition, standard approaches usually model digital images as elements of a linear space. For example, a grayscale image in PNG format can be modeled as a matrix in $\{0,1,\ldots,255\}^{H \times W},$ where each matrix entry represents a pixel intensity, and $H$ and $W$ denote the image height and width, respectively. However, the induced metrics for these vector spaces are often inadequate for image recognition tasks. These metrics, such as the euclidean distance, rely on pixel-to-pixel differences, failing to capture global overall similarities. To address it, we will identify images as probability measures. By mapping these probability measures into $l_1^N$ via the free integral, we obtain a linear representation that intrinsically respects the spatial geometry of the pixels, overcoming the limitations of pixel-to-pixel metrics.

Instead of relying on geometric constructions, our approach formulates the integration process within the framework of Lipschitz-free spaces, where we introduce the free integral via a procedure that is analogous to the classical theory of Bochner integration.

\section*{Statements and Declarations}
This research was funded by the Agencia Estatal de Investigación, grant number PID2022-138342NB-I00. The second author was supported by a contract of the Programa de Ayudas de Investigación y Desarrollo (PAID-01-24), Universitat Politècnica de València.

\printbibliography

\end{document}